\documentclass[reqno]{amsart}

% PACKAGES

\usepackage{amsmath}
\usepackage{amsfonts}
\usepackage{amssymb,enumerate}
\usepackage{amsthm}
\usepackage[all]{xy}
\usepackage{rotating}
\usepackage{hyperref,color}
%\usepackage{mymacros}
% THEOREM ENVIRONMENTS

\setcounter{tocdepth}{1}

\theoremstyle{plain}
\newtheorem{lem}{Lemma}[section]
\newtheorem{cor}[lem]{Corollary}
\newtheorem{prop}[lem]{Proposition}
\newtheorem{thm}[lem]{Theorem}

\theoremstyle{definition}
\newtheorem{defn}[lem]{Definition}
\newtheorem{ex}[lem]{Example}
\newtheorem{question}[lem]{Question}

\newtheorem{disc}[lem]{Remark}

\newtheorem{fact}[lem]{Fact}

\newtheorem{notation}[lem]{Notation}

\newtheorem{subprops}{}[lem]

% COMMENTS

% CATEGORIES

\newcommand{\cat}[1]{\mathcal{#1}}

\newcommand{\catd}{\cat{D}}

\newcommand{\cata}{\cat{A}}

\newcommand{\catb}{\cat{B}}

\newcommand{\catac}{\cat{A}_C}

\newcommand{\catbc}{\cat{B}_C}

% DIMENSIONS

\newcommand{\pd}{\operatorname{pd}}

\newcommand{\id}{\operatorname{id}}	
\newcommand{\fd}{\operatorname{fd}}

% OTHER INVARIANTS

\newcommand{\depth}{\operatorname{depth}}

\newcommand{\mspec}{\operatorname{m-Spec}}

% FUNCTORS

\newcommand{\HH}{\operatorname{H}}
\newcommand{\Hom}{\operatorname{Hom}}	

\newcommand{\spec}{\operatorname{Spec}}

\newcommand{\shift}{\mathsf{\Sigma}}

% IDEALS

\newcommand{\ideal}[1]{\mathfrak{#1}}
\newcommand{\m}{\ideal{m}}
\newcommand{\n}{\ideal{n}}
\newcommand{\p}{\ideal{p}}
\newcommand{\q}{\ideal{q}}

\newcommand{\fa}{\ideal{a}}
\newcommand{\fb}{\ideal{b}}

% OPERATIONS AND ACCENTS

\newcommand{\comp}[1]{\widehat{#1}}

% OPERATORS

\newcommand{\supp}{\operatorname{supp}}
\newcommand{\Supp}{\operatorname{Supp}}

\newcommand{\VE}{\operatorname{V}}
\newcommand{\cosupp}{\operatorname{co-supp}}

% MATHBB

\newcommand{\bbz}{\mathbb{Z}}
\newcommand{\bbn}{\mathbb{N}}

% ARROWS

\newcommand{\xra}{\xrightarrow}

% MAPS

\newcommand{\vf}{\varphi}

% MISCELLANEOUS 

\newcommand{\x}{\underline{x}}

% RENEWED COMMANDS

\renewcommand{\geq}{\geqslant}
\renewcommand{\leq}{\leqslant}

% NEW ADDITIONS

\newcommand{\Ext}[4][R]{\operatorname{Ext}_{#1}^{#2}(#3,#4)}	
\newcommand{\Rhom}[3][R]{\mathbf{R}\!\operatorname{Hom}_{#1}(#2,#3)}	
\newcommand{\Lotimes}[3][R]{#2\otimes^{\mathbf{L}}_{#1}#3}
\newcommand{\Otimes}[3][R]{#2\otimes_{#1}#3}
\renewcommand{\Hom}[3][R]{\operatorname{Hom}_{#1}(#2,#3)}	
\newcommand{\Tor}[4][R]{\operatorname{Tor}^{#1}_{#2}(#3,#4)}

\newcommand{\LL}[2]{\mathbf{L}\Lambda^{\ideal{#1}}(#2)}

\newcommand{\RG}[2]{\mathbf{R}\Gamma_{\ideal{#1}}(#2)}

\newcommand{\Comp}[2]{\widehat{#1}^{\ideal{#2}}}

\newcommand{\ssm}{\smallsetminus}

\newcommand{\catdfb}{\catd_{\text{b}}^{\text{f}}}
\newcommand{\catdb}{\catd_{\text{b}}}
\newcommand{\catdf}{\catd^{\text{f}}}

\newcommand{\LLa}[2]{\mathbf{L}\Lambda^{\mathfrak{#1}\Comp R{#1}}(#2)}
\newcommand{\LLno}[1]{\mathbf{L}\Lambda^{\ideal{#1}}}

\newcommand{\RGno}[1]{\mathbf{R}\Gamma_{\ideal{#1}}}

\newcommand{\fromRG}[2]{\varepsilon_{\ideal{#1}}^{#2}}
\newcommand{\fromRGno}[1]{\varepsilon_{\ideal{#1}}}

\newcommand{\toLLno}[1]{\vartheta^{\ideal{#1}}}

\newcommand{\PRhom}[3][R]{\mathbf{R}\!\operatorname{Hom}_{#1}\left(#2,#3\right)}	
\newcommand{\LLS}[2]{\mathbf{L}\Lambda^{\ideal{#1} S}(#2)}

\numberwithin{equation}{lem}

\begin{document}

\bibliographystyle{amsplain}

\author{Sean Sather-Wagstaff}

\address{Department of Mathematical Sciences,
Clemson University,
O-110 Martin Hall, Box 340975, Clemson, S.C. 29634
USA}

\email{ssather@clemson.edu}

\urladdr{https://ssather.people.clemson.edu/}

\thanks{
Sean Sather-Wagstaff was supported in part by a grant from the NSA}

\author{Richard Wicklein}

\address{Richard Wicklein, Mathematics and Physics Department, MacMurray College, 447 East College Ave., Jacksonville, IL 62650, USA}

\email{richard.wicklein@mac.edu}

\title{Adic Foxby Classes}

%\date{\today}

%\dedicatory{}

\keywords{
Adic finiteness; 
adic semidualizing complexes;
Auslander classes;
Bass classes;
quasi-dualizing modules;
support}
\subjclass[2010]{
13B35, % Completion,
13C12, % Torsion modules and ideals,
13D09, % Derived categories,
13D45% % Local cohomology
}

\begin{abstract}
We continue our work on adic semidualizing complexes over a commutative noetherian ring $R$ 
by investigating the associated Auslander and Bass classes (collectively known as Foxby classes), following Foxby and Christensen. 
Fundamental properties of these classes include Foxby Equivalence, which provides an equivalence between the Auslander and Bass classes associated to a given adic semidualizing complex. 
We prove a variety of stability results for these classes, for instance, with respect to $\Lotimes F-$ where $F$ is an $R$-complex finite flat dimension, including special converses of these results. 
We also investigate change of rings and local-global properties of these classes.
%We continue our work on adic semidualizing complexes over a commutative noetherian ring $R$ by investigating the associated Auslander and Bass classes (collectively known as Foxby classes), following Foxby and Christensen. Fundamental properties of these classes include Foxby Equivalence, which provides an equivalence between the Auslander and Bass classes associated to a given adic semidualizing complex. We prove a variety of stability results for these classes, for instance, with respect to $F\otimes^{\mathbf{L}}_R-$ where $F$ is an $R$-complex finite flat dimension, including special converses of these results. We also investigate change of rings and local-global properties of these classes.
% notes: 32 pages. part 6 of a series with http://arxiv.org/abs/1401.6925 and http://arxiv.org/abs/1506.07052. comments welcome.
% add other arxiv refs after posting
\end{abstract}

\maketitle

\tableofcontents

\section{Introduction} \label{sec130805a}
Throughout this paper let $R$ be acommutative noetherian ring, let $\fa \subsetneq R$ be a proper ideal of $R$, and let $\Comp{R}{a}$ be the $\fa$-adic completion of $R$.
Let $K=K^R(\x)$ denote the Koszul complex over $R$ on a  generating sequence $\x=x_1,\ldots,x_n$ for $\fa$.
We work in the derived category $\catd(R)$ with objects the $R$-complexes
indexed homologically
$X=\cdots\to X_i\to X_{i-1}\to\cdots$;
see, e.g., \cite{hartshorne:rad,verdier:cd,verdier:1} for  foundations of this construction and Section~\ref{sec140109b} for background and notation.
Isomorphisms in $\catd(R)$ are identified by the symbol $\simeq$.
We also consider the  full triangulated subcategory
$\catdb(R)$, with objects the $R$-complexes $X$ such that $\HH_i(X)=0$ for $|i|\gg 0$.
The appropriate derived functors of Hom and $\otimes$ are denoted $\mathbf{R}\!\operatorname{Hom}$ and $\otimes^{\mathbf{L}}$.

\

This paper is part 6 of a series of papers on homological constructions over $R$; see also~\cite{sather:afbha,sather:afcc,sather:asc,sather:elclh,sather:scc}.
The genesis of the current paper goes back at least to Auslander and Bridger's monograph~\cite{auslander:smt} on G-dimension of 
finitely generated modules, defined in terms of resolutions by modules of G-dimension 0, i.e., totally reflexive modules. 
This was extended to the non-finite arena by Enochs, Jenda, and Torrecillas~\cite{enochs:gipm,enochs:gf} yielding the 
G-projective, G-flat, and G-injective dimensions.

One difficulty with these dimensions is found in their definitions in terms of resolutions. 
As opposed to the standard characterization of projective dimension in terms of Ext-vanishing, for instance, a functorial characterization of the modules of finite G-projective dimension 
took significantly more work. 
This goal was achieved, first for Cohen-Macaulay rings admitting a dualizing (i.e., canonical) module,
by Enochs, Jenda, and Xu~\cite{enochs:fdgipm},
then for rings admitting a dualizing complex, by Christensen, Frankild, and Holm~\cite{christensen:ogpifd}, 
then in general by Esmkhani and Tousi~\cite{esmkhani:ghdac} and Christensen and Sather-Wagstaff~\cite{christensen:tgdrh}.
The first of these uses Foxby's Auslander and Bass classes~\cite{foxby:gmarm} with respect to a dualizing module, and the later ones use 
Avramov and Foxby's~\cite{avramov:rhafgd} more general Auslander and Bass classes with respect to a dualizing complex.

Motivated in part by Avramov and Foxby's~\cite{avramov:rhafgd} use of \emph{relative} dualizing complexes to study ring homomorphisms of finite G-dimension,
Christensen~\cite{christensen:scatac} introduced and studied Auslander and Bass classes with respect to semidualizing complexes.
A complex $C\in\catdb(R)$ with finitely generated homology is \emph{semidualizing} if the natural homothety morphism $R\to\Rhom CC$ is an isomorphism
in $\catd(R)$. Examples of these include dualizing complexes (in particular, dualizing modules), relative dualizing complexes, and the free $R$-module of rank 1, each of which has
important duality properties.
When $C$ is an $R$-\emph{module}, 
it is semidualizing over $R$ if it is finitely generated with $\Ext iCC=0$ for all $i\geq 1$ such that the natural homothety map $R\to\Hom CC$ is an isomorphism

The \emph{Bass class} $\catbc(R)$ with respect to $C$ is the class of $R$-complexes $X\in\catdb(R)$ with $\Rhom CX\in\catdb(R)$ where 
the natural morphism $\Lotimes{C}{\Rhom{C}{X}}\to X$ is  an isomorphism in $\catd(R)$.
The Auslander class $\catac(R)$ is defined similarly; see~\ref{defn121104a}.
(Fact~\ref{rmk140218b} describes these classes in the case of modules.)
By work of Holm and J\o rgensen~\cite{holm:smarghd}, these classes give functorial characterizations of certain generalized Gorenstein homological dimensions.
In summary, these classes are powerful tools for studying homological dimensions of modules and complexes.

Note that the definitions of $\catac(R)$ and $\catbc(R)$ do not \emph{a priori} require $C$ to be semidualizing.
However, when $C$ is not semidualizing, these classes tend to lose their nice properties, especially when $C$ has non-finitely generated homology. 
On the other hand, the class of semidualizing $R$-modules misses some  modules and complexes  with important duality properties, e.g., the injective hull
$E_R(R/\m)$ over a local ring $(R,\m)$, used for Matlis duality and Grothendieck's local duality. 
To fill this gap, in~\cite{sather:asc} we introduce and study the $\fa$-adic semidualizing complexes; see Definition~\ref{def120925e} below. 
In the case $\fa=0$, these are exactly Christensen's semidualizing complexes.
When $(R,\m)$ is local and $\fa=\m$, these recover Kubik's~\cite{kubik:qdm} quasi-dualizing modules (e.g., $E_R(R/\m)$) as a special case. 

The point of the current paper is to investigate the Auslander and Bass classes with respect to an $\fa$-adic semidualizing complex $M$. 
Even though $M$ does not generally have finitely generated homology, the definition allows us to retain many of the nice properties from Christensen's setting. 
For instance, we have the following version of Foxby Equivalence (originally from~\cite{avramov:rhafgd,christensen:scatac,foxby:gmarm});
it is Theorem~\ref{thm121116a} below, one of several foundational properties documented in Section~\ref{sec130818d}.

\begin{thm}\label{thm121116ax}
Let $M$ be an $\mathfrak{a}$-adic semidualizing $R$-complex. 
\begin{enumerate}[\rm(a)]
\item\label{thm121116ax1}
The functors $\Rhom{M}{-}\colon \mathcal{B}_M(R) \rightarrow \mathcal{A}_M(R)$ and $\Lotimes{M}{-}\colon \mathcal{A}_M(R) \rightarrow \mathcal{B}_M(R)$ are quasi-inverse equivalences. 
\item\label{thm121116ax2}
An $R$-complex $Y\in\catd(R)$ is in $\mathcal{B}_{M}(R)$ if and only if $\Rhom{M}{Y} \in  \mathcal{A}_M(R)$ and $\supp_{R}(Y) \subseteq \VE(\fa)$.
\item\label{thm121116ax3}
An $R$-complex  $X\in\catd(R)$ is in $\mathcal{A}_M(R)$ if and only if one has $\Lotimes{M}{X} \in  \mathcal{B}_M(R)$ and $\operatorname{co-supp}_{R}(X) \subseteq \VE(\fa)$.
\end{enumerate}
\end{thm}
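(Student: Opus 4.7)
The plan is to leverage the adjunction $\Lotimes{M}{-}\dashv\Rhom{M}{-}$, whose unit $\eta_X\colon X\to\Rhom{M}{\Lotimes{M}{X}}$ and counit $\varepsilon_Y\colon\Lotimes{M}{\Rhom{M}{Y}}\to Y$ are the natural maps whose invertibility enters the definitions of $\mathcal{A}_M(R)$ and $\mathcal{B}_M(R)$. Part (a) together with the ``only if'' directions of (b) and (c) should follow from formal adjunction arguments, while the ``if'' directions will require the $\mathfrak{a}$-adic structure of $M$ in an essential way.

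For (a), given $X\in\mathcal{A}_M(R)$, boundedness of $\Lotimes{M}{X}$ is built into the definition, and boundedness of $\Rhom{M}{\Lotimes{M}{X}}\simeq X$ follows since $\eta_X$ is an isomorphism. The key point is that $\varepsilon_{\Lotimes{M}{X}}$ is an isomorphism, which I would derive from the standard triangle identity
\begin{equation*}
\varepsilon_{\Lotimes{M}{X}}\circ\bigl(\Lotimes{M}{\eta_X}\bigr)=\mathrm{id}_{\Lotimes{M}{X}}
\end{equation*}
together with the fact that $\Lotimes{M}{\eta_X}$ is an isomorphism. Dually, the identity $\Rhom{M}{\varepsilon_Y}\circ\eta_{\Rhom{M}{Y}}=\mathrm{id}_{\Rhom{M}{Y}}$ yields $\Rhom{M}{Y}\in\mathcal{A}_M(R)$ for $Y\in\mathcal{B}_M(R)$, and the same identities establish that the functors are quasi-inverse on these classes.

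For the ``only if'' direction of (b), part (a) supplies $\Rhom{M}{Y}\in\mathcal{A}_M(R)$; the containment $\supp_R(Y)\subseteq\VE(\fa)$ then follows from the isomorphism $Y\simeq\Lotimes{M}{\Rhom{M}{Y}}$ combined with the standard inclusion $\supp_R(\Lotimes{M}{Z})\subseteq\supp_R(M)$ and the fact, established earlier in the series, that $\supp_R(M)\subseteq\VE(\fa)$ for $\mathfrak{a}$-adic semidualizing $M$. For the ``only if'' direction of (c), one instead uses the dual inclusion $\cosupp_R(\Rhom{M}{Z})\subseteq\supp_R(M)\subseteq\VE(\fa)$ applied to $X\simeq\Rhom{M}{\Lotimes{M}{X}}$.

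The main obstacle is the ``if'' direction of (b) and its analog in (c). Assume $\Rhom{M}{Y}\in\mathcal{A}_M(R)$ and $\supp_R(Y)\subseteq\VE(\fa)$. By (a), $\Lotimes{M}{\Rhom{M}{Y}}\in\mathcal{B}_M(R)$, so both ends of $\varepsilon_Y$ sit in $\catdb(R)$ with support in $\VE(\fa)$; what remains is to verify that $\varepsilon_Y$ is itself an isomorphism in $\catd(R)$. My intended strategy is to exploit the interplay between $\RGno{a}$, $\LLno{a}$, and $M$ developed in~\cite{sather:asc}: the $\mathfrak{a}$-adic condition on $M$ should imply that $\varepsilon_{-}$ restricts to a natural isomorphism on the subcategory of complexes whose support lies in $\VE(\fa)$, and the support hypothesis on $Y$ places $Y$ in this subcategory. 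Part (c) is handled symmetrically, with $\LLno{a}$ and the cosupport hypothesis detecting invertibility of $\eta_X$ on a dual subcategory. This adic-detection step is where I expect to lean most heavily on prior results from the series.
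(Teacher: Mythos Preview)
Your handling of (a) and the ``only if'' halves of (b) and (c) via the triangle identities is essentially the paper's argument. The gap is in the ``if'' directions. Your stated strategy---that the $\fa$-adic hypothesis on $M$ forces $\varepsilon_{-}$ to be a natural isomorphism on \emph{all} complexes with support in $\VE(\fa)$---is false as written: that would place every such $Y$ in $\catb_M(R)$, which by Proposition~\ref{prop150525b} happens only when $\fd_R(M)<\infty$. The hypothesis $\Rhom{M}{Y}\in\cata_M(R)$ must enter the argument in an essential way, not merely to bound the source of $\varepsilon_Y$. (Relatedly, you assert that both ends of $\varepsilon_Y$ lie in $\catdb(R)$, but at this stage $Y$ is only assumed to be in $\catd(R)$; boundedness of $Y$ is a \emph{consequence} of $\varepsilon_Y$ being an isomorphism, not an input.)

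The paper's mechanism, written out for (c), is this. The triangle identity $\varepsilon_{\Lotimes{M}{X}}\circ(\Lotimes{M}{\eta_X})=\id$ that you already recorded for part (a) shows that $\Lotimes{M}{\eta_X}$ is an isomorphism once $\varepsilon_{\Lotimes{M}{X}}$ is, i.e., once $\Lotimes{M}{X}\in\catb_M(R)$. What remains is a \emph{detection} step: both the source $X$ and the target $\Rhom{M}{\Lotimes{M}{X}}$ of $\eta_X$ have cosupport in $\VE(\fa)$ (the latter by Fact~\ref{cor130528a2}, the former by hypothesis), and on morphisms between such complexes the functor $\Lotimes{M}{-}$ reflects isomorphisms because $\supp_R(M)=\VE(\fa)$; this is~\cite[Theorem~5.7]{sather:scc}, in the spirit of Fact~\ref{thm130318aqqc}. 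Boundedness of $X$ then follows from $X\simeq\Rhom{M}{\Lotimes{M}{X}}$. For (b) the argument is symmetric: the other triangle identity shows $\Rhom{M}{\varepsilon_Y}$ is an isomorphism, and the support-based detection dual to the above upgrades this to $\varepsilon_Y$ itself. So the missing ingredient is not a blanket isomorphism statement for $\varepsilon$ on a subcategory, but rather a second pass through the triangle identity combined with the fact that $\Lotimes{M}{-}$ (resp.\ $\Rhom{M}{-}$) detects isomorphisms between complexes with constrained (co)support.
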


A difference between this result and its predecessors is the presence of support/co-support conditions; see Definition~\ref{defn130503a}.
On the other hand, these conditions are present in these earlier results, but they are invisible. Indeed, we have $\fa=0$ in those cases, so the condition
$\supp_{R}(Y) \subseteq \VE(0)=\spec(R)$ is satisfied trivially, and similarly for $\cosupp_R(X)$. 
This is a necessary feature of our constructions.
Another difference worth noticing is the lack of boundedness assumptions in parts~\eqref{thm121116ax2}--\eqref{thm121116ax3}.

Section~\ref{sec130818dz} is devoted to stability properties of these classes, i.e., their behavior with
respect to direct sum and product, in addition to well-behaved derived functors. 
For example, the next result is Theorem~\ref{prop151123a} from the body of the paper.

\begin{thm}\label{prop151123ar}
Let $M$ be an $\fa$-adic semidualizing $R$-complex.
Let $F\in\catdb(R)$ be such that $\fd_R(F)<\infty$, and let $X\in\catd(R)$.
If $X\in\catb_M(R)$, then $\Lotimes XF\in\catb_M(R)$ and $\supp_R(X)\subseteq\VE(\fa)$.
The converse of this statement holds when 
at least one of the following conditions is satisfied.
\begin{enumerate}[\rm(1)]
\item\label{prop151123ar1}
$F$ is $\fa$-adically finite such that $\supp_R(F)=\VE(\fa)$.
\item\label{prop151123ar2}
There is a  homomorphism $\vf\colon R\to S$ of commutative noetherian rings
with $\fa S\neq S$ such that $F\in\catdb(S)$ is $\fa S$-adically finite over $S$ with $\vf^*(\supp_S(F))\supseteq\VE(\fa)\bigcap\mspec(R)$,
and we have $\Lotimes KX\in\catdf(R)$.
Here
$\vf^*\colon\spec(S)\to\spec(R)$ is the induced map.
\item\label{prop151123ar3}
$F$ is a flat $R$-module with $\supp_R(F)\supseteq\VE(\fa)\bigcap\mspec(R)$, e.g., $F$ is faithfully flat, e.g., free.
\end{enumerate}
\end{thm}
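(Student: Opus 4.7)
The plan is to handle the forward direction by a direct tensor-evaluation-plus-associativity computation, and then to prove each converse by descending the crucial isomorphism along $-\otimes^{\mathbf{L}}_R F$, using the specific hypothesis on $F$ in each case to supply the required faithfulness.

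For the forward direction, suppose $X \in \catb_M(R)$. Theorem~\ref{thm121116ax}\,\ref{thm121116ax2} provides $\supp_R(X) \subseteq \VE(\fa)$ (so $\supp_R(\Lotimes{X}{F}) \subseteq \VE(\fa)$ as well), together with $\Rhom{M}{X} \in \cata_M(R)$ and the natural isomorphism $\Lotimes{M}{\Rhom{M}{X}} \simeq X$. Since $F \in \catdb(R)$ has finite flat dimension, a tensor-evaluation isomorphism yields
$$\Rhom{M}{\Lotimes{X}{F}} \simeq \Lotimes{\Rhom{M}{X}}{F},$$
and associativity of $\otimes^{\mathbf{L}}$ then gives
$$\Lotimes{M}{\Rhom{M}{\Lotimes{X}{F}}} \simeq \Lotimes{(\Lotimes{M}{\Rhom{M}{X}})}{F} \simeq \Lotimes{X}{F}.$$
Combined with the support containment, Theorem~\ref{thm121116ax}\,\ref{thm121116ax2} places $\Lotimes{X}{F}$ in $\catb_M(R)$.

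For the converse, assume $\Lotimes{X}{F} \in \catb_M(R)$ and $\supp_R(X) \subseteq \VE(\fa)$. The same tensor-evaluation and associativity identities rewrite the membership as the existence of an isomorphism
$$\Lotimes{(\Lotimes{M}{\Rhom{M}{X}})}{F} \simeq \Lotimes{X}{F}$$
together with appropriate boundedness of $\Lotimes{\Rhom{M}{X}}{F}$. The task is to exploit the hypothesis on $F$ to descend these to $\Lotimes{M}{\Rhom{M}{X}} \simeq X$ and $\Rhom{M}{X} \in \cata_M(R)$, whence Theorem~\ref{thm121116ax}\,\ref{thm121116ax2} applies. In case~\ref{prop151123ar3}, flatness of $F$ together with $\supp_R(F) \supseteq \VE(\fa) \cap \mspec(R)$ supplies local faithful flatness at each maximal ideal in $\VE(\fa)$; since both sides of the candidate isomorphism are supported in $\VE(\fa)$, this reflects both the isomorphism and the requisite boundedness back to $R$. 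In case~\ref{prop151123ar1}, $\fa$-adic finiteness of $F$ with $\supp_R(F)=\VE(\fa)$ plays the analogous role via a Koszul- or local-cohomology-style detection principle presumably established earlier in this series, permitting isomorphism and vanishing over $\VE(\fa)$ to be certified after tensoring with $F$. Case~\ref{prop151123ar2} reduces to case~\ref{prop151123ar1} over $S$: one transfers the setup along $\vf\colon R \to S$, where the Koszul-finiteness assumption $\Lotimes{K}{X} \in \catdf(R)$ guarantees that the base change is well-behaved, and then uses $\vf^*(\supp_S(F)) \supseteq \VE(\fa) \cap \mspec(R)$ to transport the conclusion back to $R$.

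The principal obstacle is the descent step in cases~\ref{prop151123ar1} and~\ref{prop151123ar2}, where $F$ is not simply faithfully flat. There the argument cannot be purely formal and must exploit the structural characterization of $\fa$-adically finite complexes from the earlier papers in this series—presumably through the Koszul complex $K$ and the $\fa$-torsion/$\fa$-complete subcategories of $\catd(R)$. Extracting boundedness of $\Rhom{M}{X}$ from boundedness of $\Lotimes{\Rhom{M}{X}}{F}$ under these hypotheses is the most delicate technical point I expect; without a ready-made lemma of that kind, the cleanest route is probably to first show that $\Lotimes{K}{\Rhom{M}{X}}$ lies in $\catdf$ (using the support containment $\supp_R(X)\subseteq\VE(\fa)$) and then to reduce the detection problem to the classical semidualizing/finite-homology setting of~\cite{christensen:scatac}.
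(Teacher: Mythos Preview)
Your forward direction hinges on the tensor-evaluation morphism
\[
\omega_{MXF}\colon \Lotimes{\Rhom{M}{X}}{F}\longrightarrow \Rhom{M}{\Lotimes{X}{F}}
\]
being an isomorphism in $\catd(R)$, and this is the gap. The standard criterion for $\omega_{MXF}$ to be an isomorphism when $\fd_R(F)<\infty$ requires $M\in\catdfb(R)$; but an $\fa$-adic semidualizing complex need not have finitely generated homology (e.g., $M=E_R(R/\m)$ over a complete local ring). The paper never asserts that $\omega_{MXF}$ itself is an isomorphism. What is available (Fact~\ref{fact151222a}) is that $\Lotimes{V}{\omega_{MXF}}$ is an isomorphism for any $V$ with $\supp_R(V)\subseteq\VE(\fa)$, in particular for $V=K$ and $V=M$. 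The paper's proof exploits exactly this: it builds a commutative triangle in which $\Lotimes{M}{\omega_{MXF}}$ links $\xi^M_{\Lotimes XF}$ to $\Lotimes{\xi^M_X}{F}$, and separately uses $\Lotimes{K}{\omega_{MXF}}$ together with the Koszul detection principle (Fact~\ref{lem150604a}) to transfer boundedness of $\Rhom{M}{-}$ in both directions.

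This same problem undermines your converse. You want to descend an isomorphism $\Lotimes{(\Lotimes{M}{\Rhom{M}{X}})}{F}\simeq\Lotimes{X}{F}$ to conclude $\xi^M_X$ is an isomorphism, but the displayed isomorphism was obtained via the full $\omega_{MXF}$, which you do not have. The paper instead observes that $\xi^M_{\Lotimes XF}$ being an isomorphism forces $\Lotimes{\xi^M_X}{F}$ to be an isomorphism (via the triangle above), and then---since the source and target of $\xi^M_X$ have support in $\VE(\fa)$---applies the vanishing/detection results from~\cite{sather:afbha} for each hypothesis on $F$ to conclude that $\xi^M_X$ itself is an isomorphism. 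Your intuition about Koszul-style detection and $\Lotimes{K}{\Rhom{M}{X}}\in\catdf(R)$ in case~\eqref{prop151123ar2} is correct and is precisely what the paper does (via Lemma~\ref{prop151123z}), but the route to get there has to bypass, not use, the unavailable global tensor-evaluation isomorphism.
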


In Section~\ref{sec150527a} we focus on transfer properties for these classes with respect to a ring homomorphism $\vf\colon R\to S$.
As a sample, here is Theorem~\ref{prop151127a} from this section.

\begin{thm}\label{prop151127ar}
Let $M$ be an $\fa$-adic semidualizing $R$-complex.
Let $\vf\colon R\to S$ be a homomorphism of commutative noetherian rings, and
assume that $\fd_R(S)<\infty$.
Let $X\in\catd(R)$ be given, and consider the following conditions.
\begin{enumerate}[\rm(i)]
\item \label{prop151127ar1}
$X\in\catb_M(R)$.
\item \label{prop151127ar2}
$\Lotimes SX\in\catb_{M}(R)$ and $\supp_R(X)\subseteq\VE(\fa)$.
\item \label{prop151127ar3}
$\Lotimes SX\in\catb_{\Lotimes SM}(S)$ and $\supp_R(X)\subseteq\VE(\fa)$.
\end{enumerate}
Then we have~\eqref{prop151127a1}$\implies$\eqref{prop151127a2}$\iff$\eqref{prop151127a3}.
The conditions~\eqref{prop151127a1}--\eqref{prop151127a3} are equivalent when 
at least one of the following conditions is satisfied.
\begin{enumerate}[\rm(1)]
\item\label{prop151127ar4}
$S$ is $\fa$-adically finite over $R$ such that $\supp_R(S)=\VE(\fa)$.
\item\label{prop151127ar5}
$\supp_R(S)\supseteq\VE(\fa)\bigcap\mspec(R)$,
and $\Lotimes KX\in\catdf(R)$.
\item\label{prop151127ar6}
$S$ is  flat over $R$ with $\supp_R(S)\supseteq\VE(\fa)\bigcap\mspec(R)$, e.g., $S$ is faithfully flat.
\end{enumerate}
\end{thm}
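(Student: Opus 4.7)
The plan is to split the argument into three parts matching the implications in the statement. First, (i)$\Rightarrow$(ii) is immediate from the forward direction of Theorem~\ref{prop151123ar} applied with $F=S$: the hypothesis $\fd_R(S)<\infty$ places $S$ in $\catdb(R)$, so it is eligible as the complex $F$ of that theorem, and its conclusion is exactly~(ii).

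For the equivalence (ii)$\iff$(iii), I would invoke standard base-change/adjunction identities. For any $S$-complex $Y$ (here $Y=\Lotimes SX$) one has natural isomorphisms
\[
\Rhom[R]{M}{Y}\simeq\Rhom[S]{\Lotimes SM}{Y}
\quad\text{and}\quad
\Lotimes[R]{M}{Y}\simeq\Lotimes[S]{(\Lotimes SM)}{Y},
\]
and a naturality check shows these intertwine the evaluation morphism $\Lotimes M{\Rhom MY}\to Y$ over $R$ with its $S$-analogue $\Lotimes[S]{(\Lotimes SM)}{\Rhom[S]{\Lotimes SM}{Y}}\to Y$. Boundedness of $\Rhom[R]MY$ is detected on underlying abelian groups, hence is insensitive to restriction of scalars from $S$ to $R$; and the support hypothesis $\supp_R(X)\subseteq\VE(\fa)$ appears identically in (ii) and (iii). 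So the two Bass-class conditions coincide.

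Finally, to obtain (ii)$\Rightarrow$(i) under any one of (1)--(3), I would apply the converse direction of Theorem~\ref{prop151123ar} with $F=S$. Conditions (1) and (3) here transfer verbatim to their counterparts in that theorem. For (2), one uses $\supp_S(S)=\spec(S)$ together with the standard comparison between $\supp_R(S)$ and $\vf^*(\spec(S))$ to upgrade the hypothesis $\supp_R(S)\supseteq\VE(\fa)\bigcap\mspec(R)$ into the required $\vf^*(\supp_S(S))\supseteq\VE(\fa)\bigcap\mspec(R)$; meanwhile $S$ viewed over itself is trivially $\fa S$-adically finite, and the auxiliary hypothesis $\Lotimes KX\in\catdf(R)$ carries over unchanged. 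The only real subtlety I anticipate is the naturality check in the previous paragraph: confirming that the base-change isomorphisms intertwine the evaluation morphism defining the Bass class, and not merely its source and target. Once that bookkeeping is done, the remainder follows mechanically from Theorem~\ref{prop151123ar}.
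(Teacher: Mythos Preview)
Your overall approach mirrors the paper's proof exactly: the paper obtains (i)$\Rightarrow$(ii) and its conditional converse directly from Theorem~\ref{prop151123a} (which is Theorem~\ref{prop151123ar}) applied with $F=S$, and handles (ii)$\Leftrightarrow$(iii) by citing Proposition~\ref{prop140218b}, whose content is precisely the adjunction/naturality argument you sketch. One small omission: for condition~(iii) to make sense you need $\Lotimes SM$ to be $\fa S$-adically semidualizing over $S$; the paper secures this from~\cite[Theorem~5.6]{sather:asc} via the hypothesis $\fd_R(S)<\infty$, and you should mention it.

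There is one concrete slip. In matching condition~(2), you assert that ``$S$ viewed over itself is trivially $\fa S$-adically finite.'' That is false in general. By Definition~\ref{def120925d}, $\fa S$-adic finiteness of $S$ over $S$ requires $\supp_S(S)\subseteq\VE(\fa S)$, i.e.\ $\spec(S)\subseteq\VE(\fa S)$, which forces $\fa S$ to lie in the nilradical of $S$. What \emph{is} trivially true is that $S\in\catdfb(S)$. The paper does not spell out how case~(2) of Theorem~\ref{prop151127a} matches case~(2) of Theorem~\ref{prop151123a} either, so you are right that this deserves scrutiny; but your specific justification does not go through, and you should re-examine what hypotheses the underlying results from~\cite{sather:afbha} actually require for $F=S$.
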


This result yields local-global behavior for Bass classes; see Theorem~\ref{thm151214a}.
This section also contains versions of these results for Auslander classes.

A reader familiar with the paper of Christensen~\cite{christensen:scatac} will undoubtedly see numerous similarities between that paper and this one.
However, the fact that $\fa$-adic semidualizing complexes do not usually have finitely generated homology makes for some technical and subtle differences.
On the other hand, some of our results, including parts of Theorem~\ref{prop151127ar}, are new even for Christensen's setting. 

\section{Background}\label{sec140109b} 

\subsection*{Derived Categories}
We consider the following full  subcategories of $\catd(R)$.

\

$\catd_+(R)$: objects are the complexes $X$ with $\HH_i(X)=0$ for $i\ll 0$.

$\catd_-(R)$: objects are the complexes $X$ with $\HH_i(X)=0$ for $i\gg 0$. 

$\catdf(R)$: objects are the complexes $X$ with $\HH_i(X)$ finitely generated for all $i$.

\

\noindent
Intersections of these categories are designated with two ornaments, e.g., $\catdfb(R)=\catdb(R)\bigcap\catdf(R)$.
The $i$th shift (or suspension) of an $R$-complex $X$ is denoted $\shift^iX$, and
the
\emph{supremum} and \emph{infimum} of  $X$ are
\begin{align*}
\sup(X)&=\sup\{ i\in\bbz\mid\HH_i(X)\neq 0\}\\
\inf(X)&=\inf\{ i\in\bbz\mid\HH_i(X)\neq 0\}
\end{align*}
with the conventions $\sup\emptyset=-\infty$ and $\inf\emptyset=\infty$.

\begin{fact}\label{disc151112a}
Let $Y,Z\in\catd(R)$.
\begin{enumerate}[(a)]
\item\label{disc151112a1}
By definition, one has $\sup(Z)<\infty$ if and only if $Z\in\catd_-(R)$,
and one has $\inf(Z)>-\infty$ if and only if $Z\in\catd_+(R)$.
\item\label{disc151112a5}
By~\cite[Lemma~2.1(1)]{foxby:ibcahtm},
there is an inequality
\begin{gather*}
\sup(\Rhom YZ)\leq\sup(Z)-\inf(Y).
\end{gather*}
\end{enumerate}
\end{fact}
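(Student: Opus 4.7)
Part (a) is a direct unfolding of the definitions in play. The supremum is $\sup(Z)=\sup\{i\in\bbz:\HH_i(Z)\neq 0\}$ with the convention $\sup\emptyset=-\infty$, while $\catd_-(R)$ consists of $R$-complexes $X$ with $\HH_i(X)=0$ for $i\gg 0$. I would simply observe that $\sup(Z)<\infty$ holds exactly when either all homology of $Z$ vanishes (giving the empty supremum $-\infty$) or the set $\{i:\HH_i(Z)\neq 0\}$ is bounded above; each case is equivalent to $Z\in\catd_-(R)$. The companion equivalence between $\inf(Z)>-\infty$ and $Z\in\catd_+(R)$ follows by the same argument with indices reversed.

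For part~(b), the statement is attributed to Foxby, so the principal step is to appeal to that reference. To record a self-contained sketch: if $\sup(Z)=\infty$ or $\inf(Y)=-\infty$, the right-hand side is $+\infty$ and the inequality is vacuous, so one may assume both quantities are finite, and then part~(a) gives $Y\in\catd_+(R)$ and $Z\in\catd_-(R)$. I would choose a semi-projective resolution $P\xra{\simeq}Y$ with $P_i=0$ for $i<\inf(Y)$ and a semi-injective resolution $Z\xra{\simeq}I$ with $I_j=0$ for $j>\sup(Z)$. Then $\Rhom YZ$ is computed by $\Hom PI$, whose degree-$n$ piece is $\prod_p\Hom{P_p}{I_{p+n}}$. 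When $n>\sup(Z)-\inf(Y)$, every index $p\geq\inf(Y)$ forces $p+n>\sup(Z)$ and hence $I_{p+n}=0$, so the whole product vanishes, yielding $\HH_n(\Rhom YZ)=0$ for all such $n$.

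There is no substantive obstacle. The only point requiring care is arranging the two resolutions with compatible boundedness so that the double indexing in $\Hom PI$ cleanly forces vanishing in the correct degree range; semi-projective resolutions for $Y$ and semi-injective resolutions for $Z$ with sharp boundedness, as provided by the standard existence theorems, are exactly what is needed.
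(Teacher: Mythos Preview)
Your proposal is correct. The paper does not supply a proof of this Fact at all: part~(a) is dispatched with the phrase ``By definition'' and part~(b) is simply attributed to Foxby's lemma \cite[Lemma~2.1(1)]{foxby:ibcahtm}, with no argument given. Your sketch for part~(b)---reducing to the case where $\sup(Z)$ and $\inf(Y)$ are finite, then choosing appropriately truncated semi-projective and semi-injective resolutions so that $\Hom PI$ vanishes in degrees above $\sup(Z)-\inf(Y)$---is the standard proof of Foxby's inequality and is exactly what lies behind the citation. One tiny edge case you did not mention explicitly is $Y\simeq 0$, where $\inf(Y)=\infty$; but then $\Rhom YZ\simeq 0$ as well, and the inequality holds under the usual conventions.
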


The next lemma is routine, but we include a proof for the sake of completeness.

\begin{lem}\label{prop151123z}
If $X,Y\in\catd_+(R)$ and $Z\in\catd_-(R)$ are given such that the complexes $\Lotimes KX$, ${\Lotimes KY}$, and ${\Lotimes KZ}$ are in $\catdf(R)$,
then one has $\Lotimes K{(\Lotimes XY)}\in\catdf_+(R)$ and $\Lotimes K{\Rhom XZ}\in\catdf_-(R)$.
\end{lem}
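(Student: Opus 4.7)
The plan is to exploit a structural identity for the Koszul complex $K=K^R(\x)$ on $n$ elements:
\[
\Lotimes{K}{K}\;\simeq\;\bigoplus_{i=0}^{n}\shift^i K^{\binom{n}{i}}
\]
in $\catd(R)$. This follows from the classical change of basis $y_j\mapsto y_j-x_j$ on the second copy of $\x$, which gives $K(\x,\x)\simeq K(\x,0,\dots,0)=\Lotimes{K}{K(0)^{\otimes n}}$, combined with the splitting $K(0)\simeq R\oplus\shift R$. Consequently, for any $W\in\catd(R)$, the complex $\Lotimes{K}{\Lotimes{K}{W}}$ is a finite direct sum of shifts of copies of $\Lotimes{K}{W}$, so $\Lotimes{K}{W}\in\catdf(R)$ if and only if $\Lotimes{K}{\Lotimes{K}{W}}\in\catdf(R)$; I will use the $(\Leftarrow)$ direction, which holds because a direct summand of a complex with finitely generated homology retains finitely generated homology (a direct summand is a quotient).

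For the first assertion, $\Lotimes{K}{\Lotimes{X}{Y}}\in\catd_+(R)$ is immediate from $X,Y\in\catd_+(R)$ and $K\in\catdb(R)$. For finite generation, associativity and commutativity of $\otimes^{\mathbf{L}}$ give
\[
\Lotimes{K}{\Lotimes{K}{\Lotimes{X}{Y}}}\;\simeq\;\Lotimes{(\Lotimes{K}{X})}{(\Lotimes{K}{Y})}.
\]
Both factors on the right lie in $\catdf_+(R)$ by hypothesis, and the derived tensor product of two objects of $\catdf_+(R)$ lies again in $\catdf_+(R)$: take bounded-below resolutions by finitely generated free modules, and the convolution is bounded-below with a finitely generated free module in each degree. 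The decomposition of $\Lotimes{K}{K}$ then forces $\Lotimes{K}{\Lotimes{X}{Y}}\in\catdf_+(R)$.

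For the second assertion, Fact~\ref{disc151112a}\eqref{disc151112a5} gives $\sup\Rhom{X}{Z}\leq\sup(Z)-\inf(X)<\infty$, so $\Rhom{X}{Z}\in\catd_-(R)$ and hence $\Lotimes{K}{\Rhom{X}{Z}}\in\catd_-(R)$. For finite generation, I would combine the Koszul self-duality $K\simeq\shift^n\Rhom{K}{R}$, hom-tensor adjunction, and the tensor-evaluation isomorphism $\Lotimes{K}{\Rhom{A}{B}}\simeq\Rhom{A}{\Lotimes{K}{B}}$ (valid since $K$ is perfect) to produce
\[
\Lotimes{K}{\Lotimes{K}{\Rhom{X}{Z}}}\;\simeq\;\shift^n\Rhom{\Lotimes{K}{X}}{\Lotimes{K}{Z}}.
\]
Since $\Lotimes{K}{X}\in\catdf_+(R)$ and $\Lotimes{K}{Z}\in\catdf_-(R)$, the standard fact that $\Rhom{A}{B}\in\catdf_-(R)$ whenever $A\in\catdf_+(R)$ and $B\in\catdf_-(R)$ places the right-hand side in $\catdf_-(R)$. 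Invoking the $\Lotimes{K}{K}$-decomposition one more time extracts $\Lotimes{K}{\Rhom{X}{Z}}$ as a direct summand, so it belongs to $\catdf(R)\cap\catd_-(R)=\catdf_-(R)$.

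The principal obstacle I anticipate is the careful bookkeeping with the various natural isomorphisms—associativity, adjunction, tensor evaluation, and Koszul self-duality—needed to bring $\Lotimes{K}{\Lotimes{K}{\Rhom{X}{Z}}}$ into the symmetric form involving both $\Lotimes{K}{X}$ and $\Lotimes{K}{Z}$. Once this form is in place, the finite-generation conclusion rests only on familiar properties of $\Rhom$ applied to objects of $\catdf_+(R)$ and $\catdf_-(R)$, together with the ``summand of finitely generated is finitely generated'' principle supplied by the decomposition of $\Lotimes{K}{K}$.
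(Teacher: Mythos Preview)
Your proof is correct and follows essentially the same strategy as the paper: reduce to showing $\Lotimes{K}{\Lotimes{K}{W}}\in\catdf(R)$ via the same associativity/adjunction/self-duality manipulations, then extract $\Lotimes{K}{W}$ as a summand. The only genuine difference is in the extraction step. You decompose $\Lotimes{K}{K}$ at the level of complexes via the change of variables $K(\x,\x)\simeq K(\x,0,\dots,0)$, whereas the paper works at the level of homology: since $\fa$ annihilates each $\HH_i(\Lotimes{K}{W})$, one has $\HH_i(\Lotimes{K}{\Lotimes{K}{W}})\cong\bigoplus_{j=0}^{n}\HH_{i+j}(\Lotimes{K}{W})^{\binom{n}{j}}$, so $\HH_i(\Lotimes{K}{W})$ is a summand of a finitely generated module. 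Your route is a bit more structural and yields an honest splitting in $\catd(R)$; the paper's route is slightly more elementary in that it avoids the change-of-basis identity for Koszul complexes. Either way the argument goes through without issue.
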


\begin{proof}
First, we observe that our assumptions on $X$ and $Y$ yield the following.
$$\Lotimes K{(\Lotimes K{(\Lotimes XY)})}\simeq\Lotimes{(\Lotimes KX)}{(\Lotimes KY)}\in\catdf_+(R)$$
Let $n$ be the length of our given generating sequence for $\fa$.
Since each module $\HH_i(\Lotimes K{(\Lotimes XY)})$ is annihilated by $\fa$, we have
$$\HH_i(\Lotimes K{(\Lotimes K{(\Lotimes XY)})})\cong\bigoplus_{j=0}^n\HH_{i+j}(\Lotimes K{(\Lotimes XY)})^{\binom{n}{j}}$$
In particular, $\HH_i(\Lotimes K{(\Lotimes XY)})$ is a summand of the finitely generated module $\HH_i(\Lotimes K{(\Lotimes K{(\Lotimes XY)})})$,
so it is finitely generated as well. This shows that we have $\Lotimes K{(\Lotimes XY)}\in\catdf(R)$.
The conclusion
$\Lotimes K{\Rhom XZ}\in\catdf(R)$ is obtained similarly, using the isomorphisms
$$\Lotimes K{(\Lotimes K{(\Rhom XZ)})}\simeq\Rhom{\Rhom KX}{\Lotimes KZ}$$
and $\Lotimes KX\simeq\shift^n\Rhom KX$.
\end{proof}

\subsection*{Homological Dimensions}
An $R$-complex $F$ is \emph{semi-flat}\footnote{In the literature, semi-flat complexes are sometimes called ``K-flat'' or ``DG-flat''.} 
if it consists of flat $R$-modules and the functor $\Otimes F-$ respects quasiisomorphisms,
that is, if it respects injective quasiisomorphisms (see~\cite[1.2.F]{avramov:hdouc}).
A \emph{semi-flat resolution} of an $R$-complex $X$ is a quasiisomorphism $F\xra\simeq X$ such that $F$ is semi-flat.
An $R$-complex $X$ has \emph{finite flat dimension} if it has a bounded semi-flat resolution;
specifically, we have
$$\fd_R(X)=\inf\{\sup\{i\mid F_i\neq 0\}\mid\text{$F\xra\simeq X$ is a semi-flat resolution}\}.$$
The  projective and injective versions of these notions are defined similarly. 

For the following items, consult~\cite[Section 1]{avramov:hdouc} or~\cite[Chapters 3 and 5]{avramov:dgha}.
Bounded below  complexes of flat modules are semi-flat, 
bounded below  complexes of projective modules are semi-projective, and
bounded above  complexes of injective modules are semi-injective.
Semi-projective $R$-complexes are semi-flat. 
Every $R$-complex admits a semi-projective (hence, semi-flat) resolution  and a semi-injective resolution.

\subsection*{Derived Local (Co)homology}
The next notions go back to Grothendieck~\cite{hartshorne:lc} and Matlis~\cite{matlis:kcd,matlis:hps};
see also~\cite{lipman:lhcs,greenlees:dfclh,lipman:llcd,yekutieli:hct}. 
Let $\Lambda^{\fa}$ denote the $\fa$-adic completion functor, and let
$\Gamma_{\fa}$ be the $\fa$-torsion functor, i.e.,
for an $R$-module $M$ we have
$$\Lambda^{\fa}(M)=\Comp Ma
\qquad
\qquad
\qquad
\Gamma_{\fa}(M)=\{ x \in M \mid \fa^{n}x=0 \text{ for } n \gg 0\}.$$ 
A module $M$ is \textit{$\fa$-torsion} if $\Gamma_{\fa}(M)=M$.

The associated left and right derived functors (i.e., \emph{derived local homology and cohomology} functors)
are  $\LL a-$ and $\RG a-$.
Specifically, given an $R$-complex $X\in\catd(R)$ and a semi-flat resolution $F\xra\simeq X$ and a 
semi-injective resolution $X\xra\simeq I$, then we have $\LL aX\simeq\Lambda^{\fa}(F)$ and $\RG aX\simeq\Gamma_{\fa}(I)$.

\begin{fact}\label{fact130619b}
By~\cite[Theorem~(0.3) and Corollary~(3.2.5.i)]{lipman:lhcs}, there are natural isomorphisms
of functors
\begin{align*}
\RG a-\simeq\Lotimes{\RG aR}{-}&&
\LL a-\simeq\Rhom{\RG aR}{-}.
\end{align*}
Note that we have $\pd_R(\RG aR)<\infty$, via the telescope complex of~\cite{greenlees:dfclh}.
Thus, if $F\in\catdb(R)$ has finite flat dimension, then so has
$\LL aF\simeq\Rhom{\RG aR}F$.
\end{fact}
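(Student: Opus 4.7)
The plan is to reduce everything to an explicit semi-projective model for $\RG aR$ and invoke standard tensor--Hom adjointness. First I would introduce the telescope complex of~\cite{greenlees:dfclh}: for each $y\in R$, the telescope $\mathrm{Tel}(y)$ is a two-term complex of countably-generated free $R$-modules quasi-isomorphic to the \v{C}ech complex $R\to R_y$, and tensoring over the generators $x_1,\ldots,x_n$ gives a bounded complex $T=\bigotimes_{i=1}^{n}\mathrm{Tel}(x_i)$ of free $R$-modules with $T\simeq\cech{\x}\simeq\RG aR$ in $\catd(R)$. Since $T$ is a bounded complex of projectives, it is semi-projective, so this exhibits $\pd_R(\RG aR)\leq n<\infty$.

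Next I would establish the two functor isomorphisms. For $\RG a-\simeq\Lotimes{\RG aR}{-}$, the \v{C}ech complex $\cech{\x}$ is a bounded complex of flat modules and, by construction, $\RG a X$ is computed by $\Otimes{\cech{\x}}{X}$ for any $X\in\catd(R)$ (one sees this by applying $\Gamma_{\fa}(-)$ to a semi-injective resolution of $X$); derived-tensor associativity combined with the flat equivalence $\cech{\x}\simeq\RG aR$ then yields the claim. For $\LL a-\simeq\Rhom{\RG aR}{-}$, I would invoke Greenlees--May duality as packaged by~\cite[Theorem~(0.3)]{lipman:lhcs}: the right adjoint of $\Lotimes{\RG aR}{-}$ is $\Rhom{\RG aR}{-}$, and that adjoint is identified there with the derived completion functor $\LL a-$.

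For the final assertion, suppose $F\in\catdb(R)$ with $\fd_R(F)<\infty$ and choose a bounded semi-flat replacement $F'$ of $F$. Semi-projectivity of $T$ yields $\LL aF\simeq\Rhom{\RG aR}{F}\simeq\Hom{T}{F'}$, a complex that is bounded because $T$ and $F'$ are. Each component is a finite direct sum of terms $\Hom{T_j}{F'_k}$, and since $T_j$ is a countable free module, $\Hom{T_j}{F'_k}$ is a countable product of copies of the flat module $F'_k$. Because $R$ is noetherian and hence coherent, Chase's theorem ensures that arbitrary products of flat $R$-modules are flat, so $\Hom{T}{F'}$ is a bounded complex of flat modules, proving $\fd_R(\LL aF)<\infty$. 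The main obstacle is matching conventions to invoke Greenlees--May duality in precisely the form needed to identify $\Rhom{\RG aR}{-}$ with $\LL a-$; modulo that citation, the remaining steps reduce to semi-projectivity of the telescope and the coherence of $R$.
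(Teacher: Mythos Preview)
The paper does not supply a proof of this Fact; it merely cites \cite[Theorem~(0.3) and Corollary~(3.2.5.i)]{lipman:lhcs} for the two functor isomorphisms, cites \cite{greenlees:dfclh} for the telescope complex yielding $\pd_R(\RG aR)<\infty$, and asserts the flat-dimension consequence with the word ``Thus'' and no further argument. Your proposal is correct and fills in exactly the details the paper suppresses.

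One point worth highlighting: your invocation of Chase's theorem is genuinely needed, and this is where you go beyond the paper's terse ``Thus.'' Because the telescope complex has \emph{countably infinite} free modules in each degree rather than finite-rank ones, the usual tensor-evaluation isomorphism $\Lotimes{\Rhom PF}{M}\simeq\Rhom{P}{\Lotimes FM}$ is not available to bound $\fd_R(\Rhom{T}{F'})$ directly. Instead, recognizing that each $\Hom{T_j}{F'_k}$ is a product of flats and then using coherence of $R$ to conclude flatness is the right move. So your argument is not just a routine unpacking of the citations---it supplies a step the paper leaves to the reader.
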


\subsection*{Support and Co-support}
The following notions of support and co-support, also crucial for our work, are due to Foxby~\cite{foxby:bcfm} and Benson, Iyengar, and Krause~\cite{benson:csc}.

\begin{defn}\label{defn130503a}
Let $X\in\catd(R)$.
The \emph{small support} and  \emph{small co-support} of $X$ are
\begin{align*}
\operatorname{supp}_R(X)
&=\{\mathfrak{p} \in \operatorname{Spec}(R)\mid \Lotimes{\kappa(\p)}X\not\simeq 0 \} \\
\cosupp_{R}(X)
&=\{\mathfrak{p} \in \operatorname{Spec}(R)\mid \Rhom{\kappa(\p)}X\not\simeq 0 \} 
\end{align*}
where $\kappa(\p):=R_\p/\p R_\p$.
\end{defn}

Much of the following is from~\cite{foxby:bcfm} when $X$ and $Y$ are appropriately bounded and from~\cite{benson:lcstc,benson:csc} in general. 
We refer to~\cite{sather:scc} as a matter of convenience.

\begin{fact}\label{cor130528a}
Let $X,Y\in\catd(R)$. 

\begin{subprops}
\label{cor130528a1}
We have $\supp_R(X)=\emptyset$ if and only if $X\simeq 0$ if and only if $\cosupp_R(X)=\emptyset$,
because of~\cite[Fact~3.4 and Proposition~4.7(a)]{sather:scc}.
\end{subprops}

\begin{subprops}
\label{cor130528a2}
By~\cite[Propositions~3.12, 3.13, 4.10, and~4.11]{sather:scc} we have 
\begin{align*}
\supp_{R}(\Lotimes{X}{Y}) 
&= \supp_R(X)\bigcap\supp_R(Y)\\
\cosupp_{R}(\Rhom{X}{Y}) 
&= \supp_R(X)\bigcap\cosupp_R(Y)\\
\supp_{R}(\RG aY) 
&= \VE(\fa)\bigcap\supp_R(Y)\\
\cosupp_{R}(\LL a{Y}) 
&= \VE(\fa)\bigcap\cosupp_R(Y).
\end{align*}
\end{subprops}

\begin{subprops}
\label{cor130528a3}
We know that $\supp_R(X)\subseteq\VE(\fa)$ if and only if 
the natural morphism $\fromRG aX\colon\RG aX\to X$ is an isomorphism,
that is, if and only if each homology module $\HH_i(X)$ is $\fa$-torsion,
by~\cite[Proposition~5.4]{sather:scc}
and~\cite[Corollary~4.32]{yekutieli:hct}.
Dually, we have $\cosupp_R(X)\subseteq\VE(\fa)$ if and only if the natural morphism $X\to\LL aX$ in $\catd(R)$ is an isomorphism, 
by~\cite[Proposition~5.9]{sather:scc}.
Also, each homology module $\HH_i(X)$ is $\fa$-adically complete if and only if each $\HH_i(X)$ is $\fa$-adically separated
and $\cosupp_R(X)\subseteq\VE(\fa)$, by~\cite[Theorem~3]{yekutieli:sccmc}.
Since $\fa$ annihilates the homology of $\Lotimes KX$,
it follows from this  that $\supp(\Lotimes KX),\cosupp(\Lotimes KX)\subseteq\VE(\fa)$.
\end{subprops}
\end{fact}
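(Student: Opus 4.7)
The plan is to prove this Fact almost entirely by direct citation of the literature already indicated inline, reserving a single elementary verification for the last sentence. For subprops~\ref{cor130528a1}, the triple equivalence is~\cite[Fact~3.4 and Proposition~4.7(a)]{sather:scc}; for subprops~\ref{cor130528a2}, the four displayed identities are~\cite[Propositions~3.12, 3.13, 4.10, and~4.11]{sather:scc}; and for the first three sentences of subprops~\ref{cor130528a3}, I would quote~\cite[Proposition~5.4]{sather:scc} together with~\cite[Corollary~4.32]{yekutieli:hct}, then~\cite[Proposition~5.9]{sather:scc}, and finally~\cite[Theorem~3]{yekutieli:sccmc}. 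In each case the cited result corresponds to the statement being made, so no additional argument is required beyond indicating the reference.

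The only piece needing a short proof is the final sentence of subprops~\ref{cor130528a3}: namely, that $\supp_R(\Lotimes KX)$ and $\cosupp_R(\Lotimes KX)$ both sit inside $\VE(\fa)$. I would begin by recalling the standard fact that multiplication by each generator $x_i$ is null-homotopic on the Koszul complex $K=K^R(\x)$, which forces $\fa\cdot\HH_j(\Lotimes KX)=0$ for every $j$. Thus each homology module $\HH_j(\Lotimes KX)$ is $\fa$-torsion, so the characterization in the first sentence of subprops~\ref{cor130528a3} yields $\supp_R(\Lotimes KX)\subseteq\VE(\fa)$ immediately.

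For the cosupport containment, I would observe that any $R$-module $N$ satisfying $\fa N=0$ has $N/\fa^kN=N$ for every $k\geq 1$, hence is trivially $\fa$-adically complete and $\fa$-adically separated. Applying this to each $\HH_j(\Lotimes KX)$ and invoking the Yekutieli characterization cited in the third sentence of subprops~\ref{cor130528a3} then gives $\cosupp_R(\Lotimes KX)\subseteq\VE(\fa)$. There is no substantive obstacle in this argument; the bulk of the Fact is bookkeeping against the cited references, and the Koszul observation used at the end is entirely standard, so the only thing to be careful about is matching each clause of the statement to the appropriate reference in the correct order.
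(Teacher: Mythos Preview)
Your proposal is correct and matches the paper's approach exactly: the Fact is stated with inline citations and no separate proof, and the final sentence is justified precisely as you describe, via the observation that $\fa$ annihilates $\HH_j(\Lotimes KX)$ together with the characterizations already cited in subprops~\ref{cor130528a3}. You have merely spelled out in a few more words what the paper compresses into the phrase ``it follows from this.''
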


The next three facts demonstrate some of the flexibility afforded by support and co-support conditions.

\begin{fact}[\protect{\cite[Lemmas~3.1(b) and 3.2(b)]{sather:afcc}}]
\label{lem150604a}
Let  $Z\in\catd(R)$.
If $\supp_R(Z)\subseteq\VE(\fa)$ or $\cosupp_R(Z)\subseteq\VE(\fa)$,
then 
one has $\Lotimes KZ\in\catdb(R)$ if and only if $Z\in\catdb(R)$.
\end{fact}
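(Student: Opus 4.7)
The forward implication ($Z \in \catdb(R) \Rightarrow \Lotimes K Z \in \catdb(R)$) is immediate and does not require either support hypothesis: since $K$ is a bounded complex of finitely generated free $R$-modules (hence semi-flat), $\Lotimes K Z \simeq K \otimes_R Z$ directly, and the tensor product of bounded complexes is bounded.

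For the reverse implication, the plan is to handle the support case first by induction on the number $n$ of generators of $\fa$, and then reduce the cosupport case to it using the derived torsion and completion functors. In the support case $\supp_R(Z) \subseteq \VE(\fa)$, Fact~\ref{cor130528a3} tells us that each $\HH_j(Z)$ is $\fa$-torsion. For the base case $n = 1$ with $\fa = (x)$, I would apply the distinguished triangle $Z \xra{x} Z \to \Lotimes K Z \xra{+1}$ and read off the long exact sequence: whenever $\HH_i(\Lotimes K Z) = 0$, multiplication by $x$ on $\HH_{i-1}(Z)$ is injective; but multiplication by $x$ cannot be injective on a nonzero $x$-torsion module, so $\HH_j(Z) = 0$ for $|j| \gg 0$. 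For the inductive step, factor $K \simeq K(x_n) \otimes_R K_{n-1}$ with $K_{n-1} = K(x_1, \ldots, x_{n-1})$, set $\fa_{n-1} := (x_1, \ldots, x_{n-1})$ and $W := \Lotimes{K_{n-1}}{Z}$ so that $\Lotimes K Z \simeq \Lotimes{K(x_n)}{W}$. The hypertor spectral sequence $E^2_{p,q} = \HH_p(K_{n-1} \otimes \HH_q(Z)) \Rightarrow \HH_{p+q}(W)$ has $E^2$-terms that are subquotients of finite direct sums of the $x_n$-torsion modules $\HH_q(Z)$, so $\HH(W)$ is $x_n$-torsion and the $n=1$ case applied to $(x_n, W)$ gives $W \in \catdb(R)$. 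Since $\fa_{n-1} \subseteq \fa$, every $\fa$-torsion module is $\fa_{n-1}$-torsion, so $\supp_R(Z) \subseteq \VE(\fa_{n-1})$, and the inductive hypothesis applied to $(\fa_{n-1}, K_{n-1}, Z)$ delivers $Z \in \catdb(R)$.

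For the cosupport case $\cosupp_R(Z) \subseteq \VE(\fa)$, I would set $Z' := \RG a Z$, which satisfies $\supp_R(Z') \subseteq \VE(\fa)$ by Fact~\ref{cor130528a2}. Chaining associativity of $\lotimes$ with Fact~\ref{fact130619b} and Fact~\ref{cor130528a3} (the latter to see that $\supp_R(\Lotimes K Z) \subseteq \VE(\fa)$ implies $\RG a(\Lotimes K Z) \simeq \Lotimes K Z$), I obtain
\begin{equation*}
\Lotimes K Z' \simeq \Lotimes{\RG a R}{\Lotimes K Z} \simeq \RG a(\Lotimes K Z) \simeq \Lotimes K Z,
\end{equation*}
which is bounded by hypothesis. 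The already-proved support case then gives $Z' \in \catdb(R)$. Since $\RG a R$ has finite flat dimension (Fact~\ref{fact130619b}), the functor $\LL a(-) \simeq \Rhom{\RG a R}{-}$ preserves boundedness, so $\LL a Z' \in \catdb(R)$. Finally, combining $Z \simeq \LL a Z$ (from Fact~\ref{cor130528a3}) with the Greenlees--May identity $\LL a \circ \RG a \simeq \LL a$ yields $Z \simeq \LL a \RG a Z = \LL a Z' \in \catdb(R)$.

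The main technical obstacles I anticipate are (i) the hypertor spectral sequence propagating $x_n$-torsionness from $\HH(Z)$ to $\HH(W)$ in the inductive step, and (ii) the identity $\LL a \circ \RG a \simeq \LL a$ used in the cosupport reduction, which follows formally from the idempotence $\Lotimes{\RG a R}{\RG a R} \simeq \RG a R$ but requires some care to set up the adjunction. Both are standard ingredients in this circle of ideas, so beyond careful bookkeeping I expect no serious difficulty.
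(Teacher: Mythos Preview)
Your proof is correct. Note, however, that the paper does not supply its own proof of this statement: it is recorded as a Fact with a citation to \cite[Lemmas~3.1(b) and 3.2(b)]{sather:afcc}, so there is no in-paper argument to compare against directly.

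A few remarks on your approach. In the inductive step, the spectral sequence you invoke to show that $\HH(W)$ is $x_n$-torsion is fine, but it is heavier than necessary: Fact~\ref{cor130528a2} already gives
\[
\supp_R(W)=\supp_R(\Lotimes{K_{n-1}}{Z})=\supp_R(K_{n-1})\cap\supp_R(Z)\subseteq\supp_R(Z)\subseteq\VE(\fa)\subseteq\VE(x_n),
\]
so Fact~\ref{cor130528a3} yields the $x_n$-torsion condition on $\HH(W)$ immediately. Your reduction of the cosupport case to the support case via $Z':=\RG aZ$ and the Greenlees--May identity $\LLno a\circ\RGno a\simeq\LLno a$ is standard and correct; this identity is part of the MGM equivalence recorded in \cite{yekutieli:hct}, so your concern (ii) is already handled by the paper's references. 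Overall the argument is clean and the anticipated obstacles are not genuine obstacles.
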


\begin{fact}[\protect{\cite[Lemma~2.8]{sather:elclh}}]
\label{fact151222c}
Let $X\in\catd(R)$ be such that  $\supp_R(X)\subseteq\VE(\fa)$.
Then the following natural transformations from Fact~\ref{cor130528a3}
are isomorphisms.
\begin{gather*}
\Lotimes X{-}\xra[\simeq]{\Lotimes X{\toLLno a}}\Lotimes X{\LL a-}
\\
\Rhom X{\RG a-} \xra[\simeq]{\Rhom X{\fromRGno a}}\Rhom{X}{-}
\end{gather*}
\end{fact}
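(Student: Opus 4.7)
The plan is to reduce both isomorphism claims to the standard MGM equivalences, which assert that for every $Z\in\catd(R)$ the natural morphisms $\RG aZ\to\RG{a}{\LL aZ}$ and $\LL{a}{\RG aZ}\to\LL aZ$ are isomorphisms (see \cite{lipman:lhcs}). The tool for this reduction is the hypothesis $\supp_R(X)\subseteq\VE(\fa)$, which by Fact~\ref{cor130528a3} gives $\fromRG aX\colon\RG aX\xra\simeq X$, and hence by Fact~\ref{fact130619b} an isomorphism $\Lotimes{\RG aR}X\simeq X$ through which I can freely pass $X$ in any derived bifunctor.

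For the first transformation, for any $Y\in\catd(R)$ associativity of the derived tensor product together with Fact~\ref{fact130619b} gives the chain of natural isomorphisms
\[
\Lotimes XY\simeq\Lotimes{(\Lotimes{\RG aR}X)}{Y}\simeq\Lotimes{X}{(\Lotimes{\RG aR}Y)}\simeq\Lotimes X{\RG aY},
\]
and the analogous computation with $Y$ replaced by $\LL aY$ identifies $\Lotimes X{\LL aY}$ with $\Lotimes{X}{\RG{a}{\LL aY}}$. Under these rewrites, the morphism $\Lotimes X{\toLL aY}$ corresponds to $\Lotimes{X}{\RG{a}{\toLL aY}}\colon\Lotimes X{\RG aY}\to\Lotimes{X}{\RG{a}{\LL aY}}$, which is an isomorphism by MGM.

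The second transformation is treated dually. Using $X\simeq\Lotimes{\RG aR}X$, Hom-tensor adjunction, and the identification $\LL a{-}\simeq\Rhom{\RG aR}{-}$ from Fact~\ref{fact130619b}, one obtains natural isomorphisms
\[
\Rhom XY\simeq\Rhom{(\Lotimes{\RG aR}X)}Y\simeq\Rhom{X}{\Rhom{\RG aR}Y}\simeq\Rhom X{\LL aY},
\]
and similarly $\Rhom X{\RG aY}\simeq\Rhom{X}{\LL{a}{\RG aY}}$. The morphism $\Rhom X{\fromRG aY}$ then corresponds to $\Rhom{X}{\LL{a}{\fromRG aY}}\colon\Rhom{X}{\LL{a}{\RG aY}}\to\Rhom X{\LL aY}$, which is again an isomorphism by MGM.

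The main obstacle will be the naturality bookkeeping needed to confirm that the strings of associativity, adjunction, and $\RG a$/$\LL a$ identifications carry $\Lotimes X{\toLL a-}$ and $\Rhom X{\fromRG a-}$ precisely to $\Lotimes{X}{\RG{a}{\toLL a-}}$ and $\Rhom{X}{\LL{a}{\fromRG a-}}$, respectively; once this compatibility is verified, the result reduces entirely to the MGM idempotence relations.
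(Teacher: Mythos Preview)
Your argument is correct. The paper does not actually prove this statement here; it is recorded as a Fact with a citation to \cite[Lemma~2.8]{sather:elclh}, so there is no in-paper proof to compare against. Your reduction to the MGM idempotence relations $\RG aY\xra\simeq\RG{a}{\LL aY}$ and $\LL{a}{\RG aY}\xra\simeq\LL aY$ via the identification $X\simeq\Lotimes{\RG aR}X$ is a clean and standard route, and the naturality bookkeeping you flag as the main obstacle is genuinely routine: the isomorphism $\Lotimes XZ\simeq\Lotimes X{\RG aZ}$ (respectively $\Rhom XZ\simeq\Rhom X{\LL aZ}$) is built from associativity, commutativity, and the fixed isomorphism $X\simeq\Lotimes{\RG aR}X$, all of which are natural in $Z$, so the relevant squares commute on the nose.

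An alternative, slightly more conceptual version of the same idea proceeds by completing $\toLL aY$ to an exact triangle $A\to Y\to\LL aY\to$, observing that $\RG aA\simeq 0$ by MGM, hence $\supp_R(A)\cap\VE(\fa)=\emptyset$ by Fact~\ref{cor130528a2}, and concluding $\Lotimes XA\simeq 0$ from $\supp_R(X)\subseteq\VE(\fa)$ and Fact~\ref{cor130528a1}; the second isomorphism is handled dually. This avoids tracking any naturality and trades it for a support computation, but both approaches rest on the same MGM input.
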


\begin{fact}[\protect{\cite[Corollaries~5.3 and~5.8]{sather:scc}}]
\label{thm130318aqqc}
Let 
$f\colon Y \to Z$ be morphism in $\catd(R)$.
Assume that either  $\supp_R(Y), \supp_R(Z) \subseteq\VE(\fa)$ or  $\cosupp_R(Y), \cosupp_R(Z) \subseteq\VE(\fa)$.
Then  
$f$ is an isomorphism in $\catd(R)$ if and only if 
$\Lotimes{K}{f}$ is an isomorphism in $\catd(R)$.
Here is the main idea of the proof, for instance, in the case $\supp_R(Y), \supp_R(Z) \subseteq\VE(\fa)$. 
Consider an exact triangle $Y\xra fZ\to A\to $ in $\catd(R)$. 
The support assumptions on $Y$ and $Z$ imply that we have $\supp_R(A)\subseteq\VE(\fa)$.
Since $\Lotimes{K}{f}$ is an isomorphism, when we tensor this triangle with $K$, we find that $\Lotimes KA\simeq 0$.
On the other hand, we have $\supp_R(K)=\VE(\fa)\supseteq\supp_R(A)$, so Fact~\ref{cor130528a2} implies that $\supp_R(A)=\emptyset$ and so $A\simeq 0$ by Fact~\ref{cor130528a1}.
Thus, our triangle shows that $f$ is an isomorphism. 

The point of discussing this proof explicitly is as follows.
We have many results in~\cite[Sections~3--5]{sather:afbha} of the following form: given a functor $F$ and an
$R$-complex $A$ such that $F(A)\simeq 0$, nice assumptions on $F$ and $A$ imply that we have $A\simeq 0$. 
Using the logic of the previous paragraph, we conclude that if $f$ is a morphism between nice complexes
such that $F(f)$ is an isomorphism, then $f$ is an isomorphism. 
To keep things reasonable, we do not state every possible variation on this theme, though we use this idea several times below. 
\end{fact}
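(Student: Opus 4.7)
The plan is to prove the nontrivial implication, namely that $\Lotimes K f$ being an isomorphism forces $f$ to be an isomorphism; the converse is immediate since $\Lotimes K -$ is a triangulated functor. I would handle both stated hypotheses in parallel by first completing $f$ to an exact triangle
\[ Y \xra{f} Z \to A \to \shift Y \]
in $\catd(R)$ and then showing $A \simeq 0$, which forces the first map to be an isomorphism.

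In the support case, the first step is to invoke the standard subadditivity of small support on exact triangles (from \cite{sather:scc}) to conclude $\supp_R(A) \subseteq \supp_R(Y) \cup \supp_R(Z) \subseteq \VE(\fa)$. Applying $\Lotimes K -$ to the triangle and using that $\Lotimes K f$ is an isomorphism by hypothesis, I obtain $\Lotimes K A \simeq 0$. Fact~\ref{cor130528a2} together with $\supp_R(K) = \VE(\fa)$ then gives
\[ \emptyset = \supp_R(\Lotimes K A) = \supp_R(K) \cap \supp_R(A) = \VE(\fa) \cap \supp_R(A) = \supp_R(A), \]
and Fact~\ref{cor130528a1} yields $A \simeq 0$, as desired.

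For the cosupport case, the outline is parallel: cosupport is also subadditive on exact triangles, so $\cosupp_R(A) \subseteq \VE(\fa)$. The extra ingredient needed is to translate $\Lotimes K A \simeq 0$ into $\Rhom K A \simeq 0$ in order to apply the intersection formula for cosupport. For this I would use the self-duality of the Koszul complex, namely $\Rhom{K}{R} \simeq \shift^{-n} K$, together with tensor-Hom duality for the perfect complex $K$, to obtain $\Rhom K A \simeq \shift^{-n}(\Lotimes K A) \simeq 0$. Fact~\ref{cor130528a2} then delivers
\[ \emptyset = \cosupp_R(\Rhom K A) = \supp_R(K) \cap \cosupp_R(A) = \cosupp_R(A), \]
and $A \simeq 0$ follows again from Fact~\ref{cor130528a1}.

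The main obstacle I expect is purely a bookkeeping one: the cosupport intersection formula involves $\Rhom K -$, whereas the hypothesis directly yields information only about $\Lotimes K -$, so the argument genuinely depends on the self-duality of $K$ to bridge the two. Everything else — the triangulated nature of the functors involved, subadditivity of small (co)support on exact triangles, and the intersection formulas for $\supp_R(\Lotimes{-}{-})$ and $\cosupp_R(\Rhom{-}{-})$ — is already provided by the background material recalled in Fact~\ref{cor130528a2}.
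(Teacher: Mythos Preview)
Your proposal is correct and follows essentially the same approach as the paper: complete $f$ to a triangle, use subadditivity of (co)support to bound the (co)support of the cone $A$ inside $\VE(\fa)$, deduce $\Lotimes K A\simeq 0$, and then invoke the intersection formulas to get $A\simeq 0$. The paper only spells out the support case explicitly; your handling of the cosupport case via Koszul self-duality $\Rhom K A\simeq\shift^{-n}(\Lotimes K A)$ is exactly the natural bridge and is implicit in the paper's references.
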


\subsection*{Adic Finiteness}
The following fact and definition take their cues from work of 
Hartshorne~\cite{hartshorne:adc},
Kawasaki~\cite{kawasaki:ccma,kawasaki:ccc}, and
Melkersson~\cite{melkersson:mci}.

\begin{fact}[\protect{\cite[Theorem~1.3]{sather:scc}}]
\label{thm130612a}
For $X\in\catd_{\text b}(R)$, the next conditions are equivalent.
\begin{enumerate}[\rm(i)]
\item\label{cor130612a1}
One has $\Lotimes{K(\underline{y})}{X}\in\catdfb(R)$  for some (equivalently for every) generating sequence $\underline{y}$ of $\fa$.
\item\label{cor130612a2}
One has  $\Lotimes{X}{R/\mathfrak{a}}\in\catd^{\text{f}}(R)$.
\item\label{cor130612a3}
One has  $\Rhom{R/\mathfrak{a}}{X}\in\catd^{\text{f}}(R)$.
\end{enumerate}
\end{fact}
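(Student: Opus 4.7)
The plan is to prove the three conditions equivalent by induction on the length $n$ of the generating sequence $\underline{y}$ of $\fa$, leveraging the self-duality of the Koszul complex $K$. The essential structural input is the isomorphism $\Rhom KR \simeq \shift^{-n}K$, which gives $\Rhom KX \simeq \shift^{-n}\Lotimes KX$ for every $X \in \catd(R)$. This immediately identifies the $\Lotimes$-form of (i) with its $\Rhom$-analogue, and thus links (iii) to (i) once (i) $\Leftrightarrow$ (ii) is established. Since condition (ii) is manifestly independent of the choice of $\underline y$, the equivalence (i) $\Leftrightarrow$ (ii) also yields the ``some versus every'' clause in (i) as a bonus.

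For the base case $n=1$, the Koszul complex $K(y_1) = (R \xra{y_1} R)$ is a free resolution of $R/\fa = R/(y_1)$, so $\Lotimes KX \simeq \Lotimes{R/\fa}X$ in $\catd(R)$, and (i) $\Leftrightarrow$ (ii) is immediate. For the inductive step, let $\fb = (y_1, \ldots, y_{n-1})$ with $K' = K(y_1, \ldots, y_{n-1})$, so that $K \simeq \cone(K' \xra{y_n} K')$. I would use the exact triangle
\begin{equation*}
\Lotimes{K'}X \xra{y_n} \Lotimes{K'}X \to \Lotimes KX \to \shift\Lotimes{K'}X
\end{equation*}
together with a derived version of the iterated quotient $R/\fa \simeq \Lotimes{R/\fb}{R/(y_n)}$ and the associated long exact sequence on $\Lotimes{R/\fa}X$, then invoke the inductive hypothesis for $\fb$ to propagate finite generation between the Koszul and $R/\fa$-derived tensors. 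The ``easy'' direction (ii) $\Rightarrow$ (i) comes out cleanly: finite generation on the $\Lotimes{R/\fa}X$ side lifts through the long exact sequences via the inductive hypothesis.

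The main obstacle is the direction (i) $\Rightarrow$ (ii) in the inductive step. The triangle above only tells us that the \emph{cone} of $y_n$ on $\Lotimes{K'}X$ has finitely generated homology, not $\Lotimes{K'}X$ itself. Overcoming this requires a Melkersson-type descent argument exploiting the $\fb$-torsion structure of the Koszul homology of $\Lotimes{K'}X$: kernels and cokernels of $y_n$ being finitely generated in each degree must be combined with a careful analysis of the $\fb$-adic filtration to recover finite generation. The possible failure of $\underline y$ to be a regular sequence forces the iterated quotient identifications to be handled derivedly, and the hypothesis $X \in \catdb(R)$ (as opposed to merely $\catd_-(R)$) is essential to keep the iterated constructions bounded, so that finite generation in each homological degree assembles into the global conclusion $\Lotimes KX \in \catdfb(R)$.
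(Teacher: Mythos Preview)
This statement is recorded in the paper as a \emph{Fact} with citation to \cite[Theorem~1.3]{sather:scc}; the present paper supplies no proof of its own, so there is nothing to compare your approach against directly. Your proposal must therefore stand on its own.

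Your overall architecture is reasonable, and you correctly isolate the self-duality $\Rhom KX\simeq\shift^{-n}\Lotimes KX$ as the bridge between the tensor and Hom formulations. You also correctly identify the hard direction (i)$\Rightarrow$(ii) and the obstacle that the cone triangle controls only $\Lotimes KX$, not $\Lotimes{K'}X$.

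There are, however, two genuine gaps. First, your base case is false as stated: $K(y_1)$ is a free resolution of $R/(y_1)$ only when $y_1$ is a non-zerodivisor on $R$. If $y_1$ is a zerodivisor then $\HH_1(K(y_1))=(0:_Ry_1)\neq 0$, so $\Lotimes{K(y_1)}X\not\simeq\Lotimes{R/(y_1)}X$ in $\catd(R)$. You flag regularity issues for the inductive step, but the same issue already infects $n=1$; the base case itself requires the Melkersson-type argument you postpone.

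Second, and more seriously, the ``Melkersson-type descent'' you invoke is not a routine detail but the entire content of the hard direction, and the mechanism you sketch does not work. Writing $M_i=\HH_i(\Lotimes{K'}X)$, you know that each $M_i$ is an $R/\fb$-module with $M_i/y_nM_i$ and $(0:_{M_i}y_n)$ finitely generated. This does \emph{not} force $M_i$ to be finitely generated: take $R=k[y]$, $\fb=0$, $y_n=y$, and $M=k[y,y^{-1}]/k[y]$, which has $M/yM=0$ and $(0:_My)\cong k$ yet is not finitely generated. The $\fb$-torsion condition adds nothing here. The actual argument (in Melkersson's work and in the cited paper) does not try to recover $\Lotimes{K'}X$ at all; rather it proves directly that $\HH_j(\Lotimes{R/\fa}X)$ is finitely generated by ascending induction on $j$, using the truncation triangle $\tau_{\geq 1}K\to K\to R/\fa$ together with the observation that $\tau_{\geq 1}K$ has $\inf\geq 1$, so the long exact sequence at degree $j$ feeds back only into degrees $<j$, where the inductive hypothesis applies. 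Your outline does not supply this or any equivalent mechanism.
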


\begin{defn}\label{def120925d}
An $R$-complex $X\in\catdb(R)$ is \emph{$\mathfrak{a}$-adically finite} if it satisfies the equivalent conditions of Fact~\ref{thm130612a} and 
$\operatorname{supp}_R(X) \subseteq \operatorname{V}(\mathfrak{a})$; see Fact~\ref{cor130528a3}.
\end{defn}

\begin{ex}\label{ex160206a}
Let $X\in\catdb(R)$ be given.
\begin{enumerate}[(a)]
\item \label{ex160206a1}
If $X\in\catdfb(R)$, then we have $\supp_R(X)=\VE(\fb)$ for some ideal $\fb$, and it follows that $X$ is $\fa$-adically finite
whenever $\fa\subseteq\fb$. (The case $\fa=0$ is from~\cite[Proposition~7.8(a)]{sather:scc}, and the general case follows readily.)
\item \label{ex160206a2}
$K$ and $\RG aR$ are $\fa$-adically finite, by~\cite[Fact~3.4 and Theorem~7.10]{sather:scc}.
\item \label{ex160206a3}
The homology modules of $X$ are artinian if and only if there is an ideal $\fa$ of finite colength (i.e., such that $R/\fa$ is artinian)
such that $X$ is $\fa$-adically finite, by~\cite[Proposition~5.11]{sather:afcc}.
\end{enumerate}
\end{ex}

As the name suggests, adically finite complexes can behave like the complexes in $\catdfb(R)$. 
The next fact gives a taste of this. See~\cite{sather:afbha,sather:afcc} for other such results.

\begin{fact}[\protect{\cite[Theorems~4.1(b) and~4.2(b)]{sather:afcc}}]\label{fact151222a}
Let $M$ be an $\fa$-adically finite $R$-complex, and let $Y,V,Z\in\catd(R)$ be such that $\supp_R(V)\subseteq\VE(\fa)$,
e.g., $V=M$ or $V=K$.
Consider the natural tensor evaluation and Hom-evaluation morphisms
\begin{gather*}
\Lotimes{\Rhom MY}{Z}\xra{\omega_{MYZ}}\Rhom M{\Lotimes YZ}\\
\Lotimes M{\Rhom YZ}\xra{\theta_{MYZ}}\Rhom {\Rhom MY}Z.
\end{gather*}
\begin{enumerate}[\rm(a)]
\item \label{fact151222a1}
If $Y\in\catd_-(R)$ and  either $\fd_R(M)$ or $\fd_R(Z)$ is finite, then
the induced morphisms $\Lotimes V{\omega_{MYZ}}$ and $\Rhom V{\omega_{MYZ}}$  are isomorphisms.
\item \label{fact151222a2}
If $Y\in\catdb(R)$ and  either $\fd_R(M)$ or $\id_R(Z)$ is finite, then
the induced morphisms $\Lotimes V{\theta_{MYZ}}$ and $\Rhom V{\theta_{MYZ}}$  are isomorphisms.
\end{enumerate}
\end{fact}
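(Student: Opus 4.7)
The plan is to reduce the assertion to the classical tensor evaluation and Hom evaluation isomorphisms for complexes with finitely generated homology, using the Koszul complex $K$ as a bridge provided by the adic finiteness of $M$.

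\textbf{First reduction.} Since $\supp_R(V)\subseteq\VE(\fa)$, Fact~\ref{cor130528a2} implies that both the source and target of $\Lotimes V{\omega_{MYZ}}$ and $\Lotimes V{\theta_{MYZ}}$ have small support in $\VE(\fa)$, while those of $\Rhom V{\omega_{MYZ}}$ and $\Rhom V{\theta_{MYZ}}$ have small cosupport in $\VE(\fa)$. By the criterion recorded in Fact~\ref{thm130318aqqc}, it therefore suffices to show that $\Lotimes K{\omega_{MYZ}}$ and $\Lotimes K{\theta_{MYZ}}$ are isomorphisms in $\catd(R)$.

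\textbf{Rewriting via Koszul self-duality.} Let $n$ be the length of the generating sequence $\x$. Since $K$ is a bounded complex of finitely generated free modules, it is perfect and self-dual: $\Rhom KR\simeq\shift^{-n}K$. Combined with tensor-Hom adjunction, this yields natural isomorphisms such as
\begin{gather*}
\Lotimes K{\Rhom MY}\simeq\shift^n\Rhom{\Lotimes KM}{Y},
\end{gather*}
and analogous ones for the tensor and bi-Hom terms. These identifications transform $\Lotimes K{\omega_{MYZ}}$ and $\Lotimes K{\theta_{MYZ}}$ (up to a canonical shift) into the classical tensor evaluation and Hom evaluation morphisms $\omega_{M'YZ}$ and $\theta_{M'YZ}$, where $M':=\Lotimes KM$ is substituted for $M$.

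\textbf{Classical case.} By the adic finiteness of $M$ (Fact~\ref{thm130612a}), we have $M'\in\catdfb(R)$. Perfectness of $K$ ensures that the assumed finite flat dimension of $M$ passes to $M'$, while the hypotheses on $Y$ and $Z$ are unchanged. The classical tensor and Hom evaluation isomorphisms for complexes with finitely generated homology (as in the treatments of Foxby and Avramov--Foxby) then yield that $\omega_{M'YZ}$ and $\theta_{M'YZ}$ are isomorphisms under the stated hypotheses, completing the reduction.

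The principal obstacle is bookkeeping: one must track boundedness of all intermediate complexes (where Lemma~\ref{prop151123z} and Fact~\ref{disc151112a} are indispensable), verify that the shifts introduced by Koszul self-duality cancel correctly so the transformed morphisms truly match the classical ones, and ensure naturality of each adjunction is strong enough to commute with $\Lotimes K-$.
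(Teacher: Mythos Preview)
The paper does not contain a proof of this statement; it is recorded as a Fact citing the authors' companion paper~\cite{sather:afcc}, so there is no in-paper argument to compare against directly. That said, your sketch is the natural and correct strategy, and it is almost certainly the one taken in the cited source: adic finiteness is defined precisely so that $M':=\Lotimes KM$ lands in $\catdfb(R)$, enabling the classical evaluation isomorphisms (as in~\cite[Lemma~4.4]{avramov:hdouc}) for $M'$ in place of $M$, after which one transports the conclusion back through Koszul self-duality and the (co)support reduction.

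One refinement to your first reduction: the cleanest way to see that it suffices to prove $\Lotimes K{\omega_{MYZ}}$ is an isomorphism is to pass to the cone $C$ of $\omega_{MYZ}$ and note that $\Lotimes KC\simeq 0$ forces both $\supp_R(C)\cap\VE(\fa)=\emptyset$ and, via the self-duality $\Lotimes K-\simeq\shift^n\Rhom K-$, also $\cosupp_R(C)\cap\VE(\fa)=\emptyset$; Fact~\ref{cor130528a} then gives $\Lotimes VC\simeq 0\simeq\Rhom VC$ for every $V$ with $\supp_R(V)\subseteq\VE(\fa)$, handling both conclusions at once. Your ``principal obstacle'' paragraph slightly oversells the difficulty: the naturality of the adjunction and evaluation morphisms is entirely standard here, and Lemma~\ref{prop151123z} is not really needed since the boundedness of the intermediate complexes follows directly from the finite flat/injective dimension hypotheses together with $M'\in\catdfb(R)$.
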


The next results augment some computations from~\cite{sather:scc}.
In the first two, note the somewhat strange switching of supp and co-supp, as compared to Fact~\ref{cor130528a2}.

\begin{lem}\label{lem150612b}
Let $X\in\catd(R)$ and $P\in\catdfb(R)$ be such that $\pd_R(P)<\infty$.
Then there are equalities
\begin{gather*}
\supp_R(\Rhom PX)=\supp_R(P)\bigcap\supp_R(X)
\\
\cosupp_R(\Lotimes PX)=\supp_R(P)\bigcap\cosupp_R(X).
\end{gather*}
\end{lem}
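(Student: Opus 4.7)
The strategy is to leverage the assumption that $P$ is a perfect complex---being in $\catdfb(R)$ with $\pd_R(P)<\infty$---to reduce both identities to the support/co-support formulas of Fact~\ref{cor130528a2} via the tensor- and Hom-evaluation isomorphisms of Fact~\ref{fact151222a}. The key preliminary observation is that $P$ is $0$-adically finite by Example~\ref{ex160206a}\eqref{ex160206a1}, and that $\supp_R(R)=\spec(R)=\VE(0)$ lets us apply the evaluation results with $V=R$ and $\fa=0$.

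Accordingly, I would first apply Fact~\ref{fact151222a}\eqref{fact151222a1} and~\eqref{fact151222a2} with $M=P$, $Y=R$, $Z=X$, and $V=R$---legitimate because $\fd_R(P)\leq\pd_R(P)<\infty$---to obtain the tensor-evaluation and Hom-evaluation isomorphisms
\begin{align*}
\Lotimes{\Rhom PR}{X}&\xra{\simeq}\Rhom{P}{X},\\
\Lotimes{P}{X}&\xra{\simeq}\Rhom{\Rhom PR}{X},
\end{align*}
where in the second line I use $\Rhom RX\simeq X$. Combining these with the standard formulas from Fact~\ref{cor130528a2} for the support of a tensor product and the co-support of a derived Hom produces
\begin{align*}
\supp_R(\Rhom PX)&=\supp_R(\Rhom PR)\cap\supp_R(X),\\
\cosupp_R(\Lotimes PX)&=\supp_R(\Rhom PR)\cap\cosupp_R(X).
\end{align*}
At this point, both equalities of the lemma are reduced to the single identity $\supp_R(\Rhom PR)=\supp_R(P)$.

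For this remaining identity, I would set $X=\kappa(\p)$ in the tensor-evaluation isomorphism above to obtain $\Lotimes{\kappa(\p)}{\Rhom PR}\simeq\Rhom{P}{\kappa(\p)}$, and then invoke the standard adjunction $\Rhom{P}{\kappa(\p)}\simeq\Rhom[\kappa(\p)]{\Lotimes{\kappa(\p)}{P}}{\kappa(\p)}$; being a $\kappa(\p)$-vector space dual, this last complex vanishes in $\catd(\kappa(\p))$ precisely when $\Lotimes{\kappa(\p)}{P}\simeq 0$. This is exactly the equivalence $\p\in\supp_R(\Rhom PR)\iff\p\in\supp_R(P)$, as required. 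The main subtle point in the argument is verifying that Fact~\ref{fact151222a} is applicable at all: this turns on the observation that the zero ideal is a legitimate choice for $\fa$, so that $P\in\catdfb(R)$ is automatically $0$-adically finite and $V=R$ satisfies the support hypothesis $\supp_R(V)\subseteq\VE(0)$.
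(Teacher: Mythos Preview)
Your proof is correct and takes essentially the same approach as the paper: both establish $\Rhom PX\simeq\Lotimes{P^*}X$ and $\Lotimes PX\simeq\Rhom{P^*}X$ with $P^*=\Rhom PR$, apply Fact~\ref{cor130528a2}, and then reduce to the identity $\supp_R(P^*)=\supp_R(P)$. The only cosmetic differences are that the paper cites \cite[Lemma~4.4(I)]{avramov:hdouc} directly for the evaluation isomorphism (after first writing $P\simeq\Rhom{P^*}R$), and deduces $\supp_R(P)=\supp_R(P^*)$ from the reflexivity $P\simeq P^{**}$ together with the equality of small and large support on $\catdfb(R)$, rather than your pointwise $\kappa(\p)$-duality argument.
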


\begin{proof}
Set $P^*:=\Rhom PR$.
The  assumptions $P\in\catdfb(R)$ and $\pd_R(P)<\infty$ imply that $P\simeq\Rhom{P^*}R$.
In particular, we have $P\simeq 0$ if and only if $P^*\simeq 0$. 
For any prime ideal $\p\in\spec(R)$, it follows that $P_{\p}\simeq 0$ if and only if $(P^*)_{\p}\simeq 0$,
so we have $\Supp_R(P)=\Supp_R(P^*)$, that is, $\supp_R(P)=\supp_R(P^*)$ since $P,P^*\in\catdfb(R)$.

In the next sequence of isomorphisms, the second step is Hom-evaluation~\cite[Lemma~4.4(I)]{avramov:hdouc}
$$\Rhom{P}{X}\simeq\Rhom{\Rhom{P^*}R}{X}\simeq\Lotimes{P^*}{\Rhom RX}\simeq\Lotimes{P^*}X$$
and the other steps are routine.
From this, we have
\begin{align*}
\supp_R(\Rhom PX)
&=\supp_R(\Lotimes{P^*}X) \\
&=\supp_R(P^*)\bigcap\supp_R(X) \\
&=\supp_R(P)\bigcap\supp_R(X)
\end{align*}
by Fact~\ref{cor130528a}, and hence the first equality from the statement of the result.
For the second  equality from the statement of the result, argue similarly
via the isomorphism $\Lotimes PX\simeq\Rhom{P^*}X$.
\end{proof}

\begin{lem}\label{prop151106a}
Let $X\in\catd(R)$, and let $F\in\catdb(R)$ be $\fa$-adically finite such that $\fd_R(F)<\infty$.
Then there are equalities
\begin{gather*}
\supp_R(\Rhom FX)\bigcap\VE(\fa)=\supp_R(F)\bigcap\supp_R(X)\bigcap\VE(\fa)
\\
\cosupp_R(\Lotimes FX)\bigcap\VE(\fa)=\supp_R(F)\bigcap\cosupp_R(X)\bigcap\VE(\fa).
\end{gather*}
\end{lem}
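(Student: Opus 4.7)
The plan is to reduce to Lemma~\ref{lem150612b} by tensoring with the Koszul complex $K$. I will set $P:=\Lotimes KF$ and note that $P\in\catdfb(R)$ because $F$ is $\fa$-adically finite, and that $\fd_R(P)<\infty$ because $\fd_R(K)$ and $\fd_R(F)$ are both finite. Over the noetherian ring $R$, a complex in $\catdfb(R)$ of finite flat dimension has finite projective dimension, hence $\pd_R(P)<\infty$ and Lemma~\ref{lem150612b} applies to $P$. Moreover, Fact~\ref{cor130528a2} yields $\supp_R(P)=\VE(\fa)\cap\supp_R(F)=\supp_R(F)$, where the second equality uses $\supp_R(F)\subseteq\VE(\fa)$ by the adic finiteness of $F$.

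For the first equality I will combine two computations of $\supp_R(\Rhom PX)$ coming from the adjunction isomorphism $\Rhom PX\simeq\Rhom K{\Rhom FX}$. On the one hand, Lemma~\ref{lem150612b} applied to $P$ gives
\begin{equation*}
\supp_R(\Rhom PX)=\supp_R(P)\cap\supp_R(X)=\supp_R(F)\cap\supp_R(X).
\end{equation*}
On the other hand, Koszul self-duality yields $\Rhom KY\simeq\shift^{-n}\Lotimes KY$ for every $Y\in\catd(R)$, so Fact~\ref{cor130528a2} yields
\begin{equation*}
\supp_R(\Rhom K{\Rhom FX})=\VE(\fa)\cap\supp_R(\Rhom FX).
\end{equation*}
Equating these two expressions, and noting that $\supp_R(F)\cap\supp_R(X)\subseteq\VE(\fa)$ since $\supp_R(F)\subseteq\VE(\fa)$, delivers the first claimed equality.

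The second equality will follow the same template. This time I will use the isomorphism $\Lotimes PX\simeq\Lotimes K{\Lotimes FX}$, apply Lemma~\ref{lem150612b} to obtain
\begin{equation*}
\cosupp_R(\Lotimes PX)=\supp_R(P)\cap\cosupp_R(X)=\supp_R(F)\cap\cosupp_R(X),
\end{equation*}
and combine this with the identity $\cosupp_R(\Lotimes KZ)=\cosupp_R(\shift^{-n}\Rhom KZ)=\VE(\fa)\cap\cosupp_R(Z)$, which follows from Koszul self-duality and Fact~\ref{cor130528a2}, taking $Z=\Lotimes FX$. The main obstacle is the verification that $P=\Lotimes KF$ has finite projective dimension (so that Lemma~\ref{lem150612b} actually applies); once that is in hand, both equalities follow from routine bookkeeping with Fact~\ref{cor130528a2}, Koszul self-duality, and Lemma~\ref{lem150612b}.
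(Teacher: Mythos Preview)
Your proof is correct and takes essentially the same approach as the paper: set $P:=\Lotimes KF$, observe that $P\in\catdfb(R)$ with $\pd_R(P)<\infty$ so that Lemma~\ref{lem150612b} applies, and then compare the two computations of $\supp_R(\Rhom PX)$ and $\cosupp_R(\Lotimes PX)$ via adjunction and Koszul self-duality. The paper merely cites the argument of \cite[Theorem~7.12]{sather:scc} for these last details, whereas you spell them out explicitly; the content is the same.
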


\begin{proof}
The complex $P:=\Lotimes KF$ is in $\catdfb(R)$, since $F$ is $\fa$-adically finite.
Furthermore, since $K$ and $F$ both have finite flat dimension, the same is true for $P$.
Combining these facts, we see that $P$ satisfies the hypotheses of Lemma~\ref{lem150612b}.
Using this as in the proof of~\cite[Theorem~7.12]{sather:scc}, we obtain the desired conclusions.
\end{proof}

\subsection*{Adic Semidualizing Complexes}\label{sec130818b}
The complexes defined next are introduced and studied in this generality in~\cite{sather:asc}.

\begin{disc}\label{rmk140129a}
Let $M\in\catdb(R)$
with $\supp_R(M)\subseteq V(\fa)$. 
From~\cite[Lemma 3.1]{sather:asc} we know that $M$ has a bounded above semi-injective resolution $M\xra\simeq J$ over $R$ 
consisting of injective $\Comp Ra$-modules and $\Comp{R}{a}$-module homomorphisms.
This yields a well-defined chain map $\chi^{\Comp Ra}_J\colon\Comp Ra\to\Hom JJ$ given by $\chi^{\Comp Ra}_J(r)(j)=rj$
and, in turn, a well-defined homothety morphism $\chi^{\Comp Ra}_M\colon\Comp Ra\to\Rhom MM$ in $\catd(R)$.
\end{disc}

\begin{defn}\label{def120925e}
An  \emph{$\mathfrak{a}$-adic semidualizing $R$-complex} is an $\mathfrak{a}$-adically finite $R$-complex $M$ 
(see Definition~\ref{def120925d}) such that the homothety morphism 
$\chi_{M}^{\widehat{R}^{\mathfrak{a}}}\colon \widehat{R}^{\mathfrak{a}} \rightarrow \Rhom{M}{M}$ 
from Remark~\ref{rmk140129a} is an isomorphism in $\catd(R)$.
\end{defn}

We end this section with some examples, for perspective in the sequel.

\begin{ex}
Let $M\in\catdb(R)$.
\begin{subprops}
\label{prop130528bxx}
If $M$ is an $R$-module, then it is $\fa$-adically semidualizing as an $R$-complex 
if it is $\fa$-adically finite, the natural homothety map $\Comp{R}{a} \to \Hom{M}{M}$, 
defined as in Remark~\ref{rmk140129a},
is an isomorphism, and $\Ext{i}{M}{M}=0$ for all $i \geq 1$.
\end{subprops}

\begin{subprops}
\label{prop130528b}
The complex $M$ is semidualizing if and only if it is $0$-adically semidualizing, by~\cite[Proposition~4.4]{sather:asc}. 
\end{subprops}

\begin{subprops}
\label{prop120925b}
Assume that $(R,\m)$ is local. 
Then  $M$ is $\m$-adically semidualizing if and only if each homology module $\HH_i(M)$ is artinian 
and the homothety morphism $\chi_{M}^{\widehat{R}^{\mathfrak{m}}}\colon \widehat{R}^{\mathfrak{m}} \rightarrow \Rhom{M}{M}$ is an isomorphism in $\catd(R)$.
Hence, an $R$-module $T$ is quasi-dualizing if and only if it is $\m$-adically semidualizing; see~\cite{kubik:qdm}.
Hence, the injective hull $E_R(R/\m)$ is $\m$-adically semidualizing.
See~\cite[Proposition~4.5]{sather:asc}.
\end{subprops}

\begin{subprops}
\label{thm130330b}
If $C$ is a semidualizing  $R$-complex, e.g., $C=R$, 
then the complex $\RG{\fa}{C}$ is $\fa$-adically semidualizing, by~\cite[Corollary~4.8]{sather:asc}.
\end{subprops}
\end{ex}

\section{Foxby Classes}\label{sec130818d}

This section develops the foundations of Auslander and Bass classes in the adic context. 
It contains our version of the ubiquitous ``Foxby Equivalence'', which is Theorem~\ref{thm121116ax} from the introduction, among other results.

\begin{defn}\label{defn121104a}
Let $M,X,Y\in\catdb(R)$. 
\begin{enumerate}[(a)]
\item
The complex $X$ is in the \textit{Auslander class} $\mathcal{A}_M(R)$ if $\Lotimes{M}{X}\in\catdb(R)$ 
and the natural morphism $\gamma_X^M\colon X \rightarrow \Rhom{M}{\Lotimes{M}{X}}$ is  an isomorphism in $\catd(R)$.
\item
The complex $Y$ is in the \textit{Bass class} $\mathcal{B}_M(R)$ if one has $\Rhom{M}{Y}\in\catdb(R)$  and 
the natural evaluation morphism $\xi_Y^M\colon \Lotimes{M}{\Rhom{M}{Y}} \rightarrow Y$ is an isomorphism in $\catd(R)$.
\end{enumerate}
\end{defn}

The next result gives some  examples of objects in Foxby classes to keep in mind.
See Propositions~\ref{prop140218a} and~\ref{prop150527a}\eqref{prop150527a1} 
for improvements on the conclusion $\Comp{R}{a} \in \mathcal{A}_M(R)$.

\begin{prop}\label{prop130612a}
Let $M$ be an $\fa$-adic semidualizing $R$-complex.
Then one has $\Comp{R}{a} \in \mathcal{A}_M(R)$ and $M \in \mathcal{B}_M(R)$.
\end{prop}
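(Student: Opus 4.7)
The plan is to handle both claims simultaneously by exploiting the defining isomorphism $\chi_M^{\Comp Ra}\colon\Comp Ra\xra{\simeq}\Rhom MM$ together with the support condition $\supp_R(M)\subseteq\VE(\fa)$ coming from $\fa$-adic finiteness of $M$. The central calculation is that $\Lotimes M{\Comp Ra}\simeq M$: since $R$ is noetherian we have $\Comp Ra\simeq\LL aR$, and Fact~\ref{fact151222c} applied with $X=M$ shows that $\Lotimes M{\toLL{a}{R}}\colon\Lotimes MR\to\Lotimes M{\LL aR}$ is an isomorphism in $\catd(R)$. In particular, $\Lotimes M{\Comp Ra}\in\catdb(R)$, and dually via $\chi_M^{\Comp Ra}$ we have $\Rhom MM\simeq\Comp Ra\in\catdb(R)$, so the boundedness half of each definition is automatic.

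For $M\in\mathcal{B}_M(R)$, the remaining task is to show that $\xi_M^M\colon\Lotimes M{\Rhom MM}\to M$ is an isomorphism. Composing with $\Lotimes M{\chi_M^{\Comp Ra}}$ produces a morphism $\Lotimes M{\Comp Ra}\to M$ which, by the Hom-tensor adjunction, is the adjoint of $\chi_M^{\Comp Ra}$, i.e., the natural action of $\Comp Ra$ on $M$. Under the canonical isomorphism $\Lotimes M{\toLL{a}{R}}\colon\Lotimes MR\xra{\simeq}\Lotimes M{\Comp Ra}$, this action map corresponds to the usual multiplication $\Lotimes MR\to M$, a classical isomorphism. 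Hence $\xi_M^M\circ\Lotimes M{\chi_M^{\Comp Ra}}$ is an isomorphism, and consequently so is $\xi_M^M$.

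For $\Comp Ra\in\mathcal{A}_M(R)$, it remains to show that $\gamma_{\Comp Ra}^M\colon\Comp Ra\to\Rhom M{\Lotimes M{\Comp Ra}}$ is an isomorphism. Naturality of $\gamma_{-}^M$ applied to $\chi_M^{\Comp Ra}$ places $\gamma_{\Comp Ra}^M$ in a commutative square with $\gamma_{\Rhom MM}^M$, the other two sides being the isomorphisms $\chi_M^{\Comp Ra}$ and $\Rhom M{\Lotimes M{\chi_M^{\Comp Ra}}}$. The triangle identity for the adjunction $\Lotimes M{-}\dashv\Rhom M{-}$ reads $\Rhom M{\xi_M^M}\circ\gamma_{\Rhom MM}^M=\operatorname{id}$, so once $\xi_M^M$ has been shown to be an isomorphism, $\gamma_{\Rhom MM}^M$ is an isomorphism, and therefore so is $\gamma_{\Comp Ra}^M$.

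The principal obstacle is the identification used in the second paragraph: that $\xi_M^M\circ\Lotimes M{\chi_M^{\Comp Ra}}$ agrees, after precomposition with $\Lotimes M{\toLL{a}{R}}$, with the canonical isomorphism $\Lotimes MR\simeq M$. This requires a diagram chase in $\catd(R)$ exploiting naturality of the counit $\xi$, together with the fact that, by its construction in Remark~\ref{rmk140129a}, $\chi_M^{\Comp Ra}$ is by design the Hom-tensor adjoint of the $\Comp Ra$-action on $M$ coming from a bounded-above semi-injective resolution of $M$ over $\Comp Ra$. Once this compatibility is pinned down, both parts of the proposition assemble as above.
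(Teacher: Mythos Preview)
Your proof is correct and shares the same core ingredients as the paper's: the isomorphism $\Lotimes M{\Comp Ra}\simeq M$ (the paper cites this as a natural morphism $\alpha\colon M\to\Lotimes M{\Comp Ra}$ being an isomorphism, which amounts to your use of Fact~\ref{fact151222c}), together with the defining homothety $\chi_M^{\Comp Ra}$.

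The organization differs slightly. The paper treats $\Comp Ra\in\cata_M(R)$ first, via a direct commutative triangle
\[
\xymatrix{
\Comp{R}{a} \ar[r]^-{\chi_{M}^{\Comp{R}{a}}}_-{\simeq} \ar[rd]_-{\gamma_{\Comp{R}{a}}^M} & \Rhom{M}{M}\ar[d]^{\Rhom{M}{\alpha}}_{\simeq} \\
& \Rhom{M}{\Lotimes{M}{\Comp{R}{a}}}
}
\]
and then handles $M\in\catb_M(R)$ by the same composition-equals-identity argument you use. You reverse the order: you prove $M\in\catb_M(R)$ first, then deduce $\Comp Ra\in\cata_M(R)$ by naturality of $\gamma^M$ plus the triangle identity $\Rhom M{\xi_M^M}\circ\gamma^M_{\Rhom MM}=\id$. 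This is a perfectly valid alternative, though slightly more indirect than the paper's triangle, which avoids any dependence on the Bass-class statement. Your closing remark about the compatibility of $\xi_M^M\circ\Lotimes M{\chi_M^{\Comp Ra}}$ with the action map is exactly the point the paper sweeps into the assertion that the displayed composition ``is $\id_M$''; you are right that this is where the derived-category diagram chase lives, and your identification via adjunction is the correct way to see it.
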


\begin{proof}
First, we show that $\Comp{R}{a} \in \mathcal{A}_M(R)$.
Since $\Comp Ra$ is flat over $R$ and $M\in\catdb(R)$, we have $\Lotimes{M}{\Comp Ra}\in\catdb(R)$.
By~\cite[Theorem~5.10]{sather:scc}, the natural morphism $\alpha\colon M\to \Lotimes M{\Comp Ra}$ is an isomorphism in $\catd(R)$.
From  the next commutative diagram in $\catd(R)$
$$\xymatrix{
\Comp{R}{a} \ar[r]^-{\chi_{M}^{\Comp{R}{a}}}_-{\simeq} \ar[rd]_-{\gamma_{\Comp{R}{a}}^M} & \Rhom{M}{M}\ar[d]^{\Rhom{M}{\alpha}}_{\simeq}
  \\
& \Rhom{M}{\Lotimes{M}{\Comp{R}{a}}} }$$
we conclude that $\gamma_{\Comp{R}{a}}^M$ is an isomorphism, so $\Comp{R}{a} \in \mathcal{A}_M(R)$.

Next, we show that $M \in \mathcal{B}_M(R)$.
By assumption, the homothety morphism $\chi^{\Comp Ra}_M\colon\Comp Ra\to\Rhom{M}{M}$ is an isomorphism in $\catd(R)$. In particular,
we have $\Rhom MM\in\catdb(R)$.
The composition of the following morphisms
$$M\xra[\simeq]{\alpha}
\Lotimes{M}{\Comp{R}{a}} \xra[\simeq]{\Lotimes{M}{\chi_{M}^{\Comp{R}{a}}}} \Lotimes{M}{\Rhom{M}{M}} \xra{\xi_M^M} M$$
is $\id_M$, so we conclude that $\xi_M^M$ is an isomorphism, so $M \in \mathcal{B}_M(R)$.
\end{proof}

Much of this work highlights the similarities between
Christensen's setting~\cite{christensen:scatac} where $\fa=0$ and the general case.
However, the next two items document some important differences to keep in mind.

\begin{fact}[\protect{\cite[Observation 4.10]{christensen:scatac}}]
\label{rmk140218b}
Let $C$ be a semidualizing $R$-module.
\begin{subprops}
An $R$-module $A$ is in  $\mathcal{A}_C(R)$ if and only if 
the natural map
$\gamma^M_A\colon A\to \Hom{M}{\Otimes{M}{A}}$  is an isomorphism
and for all $i\geq 1$ we have
$\Tor{i}{M}{A}=0=\Ext{i}{M}{\Otimes{M}{A}}$.
\end{subprops}
\begin{subprops}
An $R$-module $B$ is in $\mathcal{B}_C(R)$ if and only if the natural evaluation homomorphism
$\xi^M_B\colon \Otimes{M}{\Hom{M}{B}} \to B$  is an isomorphism and for all $i\geq 1$ we have $\Ext{i}{M}{B}=0=\Tor{i}{M}{\Hom{M}{B}}$ .
\end{subprops}
\end{fact}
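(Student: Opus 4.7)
The plan is to prove both biconditionals by translating between the derived-category definition of $\mathcal{A}_C(R)$ and $\mathcal{B}_C(R)$ from Definition~\ref{defn121104a} and the module-level Tor/Ext vanishing conditions, exploiting that $C$ is a semidualizing module: $C$ is finitely generated with $\Hom{C}{C}\cong R$ and $\Ext{i}{C}{C}=0$ for $i\geq 1$. Since $C$ sits in degree $0$, the complex $\Lotimes{C}{A}$ has homology only in non-negative degrees, with $\HH_i(\Lotimes{C}{A})\cong\Tor{i}{C}{A}$, while $\Rhom{C}{B}$ has homology only in non-positive degrees, with $\HH_{-i}(\Rhom{C}{B})\cong\Ext{i}{C}{B}$.

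For the easy direction of (a), I would assume the stated Tor/Ext vanishing together with the module-level isomorphism $\gamma_A^C$. The Tor vanishing collapses $\Lotimes{C}{A}$ to $\Otimes{C}{A}$ in $\catd(R)$, so $\Lotimes{C}{A}\in\catdb(R)$; the Ext vanishing then collapses $\Rhom{C}{\Otimes{C}{A}}$ to $\Hom{C}{\Otimes{C}{A}}$; under these identifications the derived $\gamma_A^C$ becomes the module map, which is an isomorphism by hypothesis. Part (b) is formally symmetric, using $\xi_B^C$ in place of $\gamma_A^C$.

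For the hard direction of (a), I would start from the derived isomorphism $\gamma_A^C\colon A\xra{\simeq}\Rhom{C}{\Lotimes{C}{A}}$ in $\catd(R)$. Setting $s=\sup(\Lotimes{C}{A})$ and supposing $s\geq 1$ for contradiction, I would use the soft-truncation triangle $\tau_{<s}(\Lotimes{C}{A})\to\Lotimes{C}{A}\to\shift^s\Tor{s}{C}{A}\to$ and apply $\Rhom{C}{-}$. The sup estimate of Fact~\ref{disc151112a}\eqref{disc151112a5} gives $\sup(\Rhom{C}{\tau_{<s}(\Lotimes{C}{A})})\leq s-1$, while the hyperext spectral sequence collapses on the top row via the semidualizing identities $\Hom{C}{C}\cong R$ and $\Ext{i}{C}{C}=0$; hence nonvanishing of $\Tor{s}{C}{A}$ would force nonzero homology of $\Rhom{C}{\Lotimes{C}{A}}\simeq A$ above degree $0$, contradicting $A$ being a module. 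This forces $s=0$, so $\Lotimes{C}{A}\simeq\Otimes{C}{A}$ and the Tor vanishing holds; then $\Rhom{C}{\Otimes{C}{A}}\simeq A$ lives in degree $0$, yielding the Ext vanishing, and the derived $\gamma_A^C$ descends to the required module isomorphism.

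Part (b) will proceed dually: apply the inf-analogue of Fact~\ref{disc151112a}\eqref{disc151112a5} to soft-truncations of $\Rhom{C}{B}$ at its infimum, together with $\xi_B^C\colon\Lotimes{C}{\Rhom{C}{B}}\xra{\simeq}B$ and the semidualizing identities, to extract $\Ext{i}{C}{B}=0$ for $i\geq 1$; then $\Rhom{C}{B}\simeq\Hom{C}{B}$, the remaining Tor vanishing for $\Hom{C}{B}$ follows from a parallel argument with $\xi_B^C$ in the role previously played by $\gamma_A^C$, and $\xi_B^C$ descends to the module isomorphism. The main obstacle is precisely this upgrade step: the sup/inf bounds from Fact~\ref{disc151112a}\eqref{disc151112a5} only constrain degrees, and turning $s\geq 1$ (or $\inf(\Rhom{C}{B})<0$) into a genuine contradiction requires the full strength of the semidualizing identity $\Rhom{C}{C}\simeq R$, so that the extremal Tor or Ext piece cannot silently cancel through the natural evaluation morphism.
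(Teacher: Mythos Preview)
The paper does not prove this statement: it is recorded as a Fact with a citation to \cite[Observation~4.10]{christensen:scatac} and no argument is given. So there is no ``paper's own proof'' to compare against.

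That said, your outline is essentially correct, but the justification you give for the crucial nonvanishing step is imprecise. In the hard direction of (a), after the truncation triangle and the sup bound you correctly arrive at $\HH_s\bigl(\Rhom{C}{\Lotimes{C}{A}}\bigr)\cong\Hom{C}{\Tor{s}{C}{A}}$ for $s=\sup(\Lotimes{C}{A})$, and you need this to be nonzero when $\Tor{s}{C}{A}\neq 0$. You attribute this to ``the semidualizing identities $\Hom{C}{C}\cong R$ and $\Ext{i}{C}{C}=0$,'' but those identities are not what is doing the work here. What you actually need is that $\Hom{C}{-}$ detects nonzero modules, and this follows because $C$ is finitely generated with $\supp_R(C)=\spec(R)$: for any nonzero module $N$ choose $\p\in\ass_R(N)$, so $R/\p\hookrightarrow N$, and then $\Hom{C}{R/\p}_\p\cong\Hom[R_\p]{C_\p}{\kappa(\p)}\neq 0$ by Nakayama. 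The dual step in (b) uses the companion fact that $C\otimes_R-$ detects nonzero modules, proved the same way. With this clarification your argument goes through; the spectral-sequence language is optional, since the edge isomorphism at the corner is all you use.
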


\begin{ex}\label{ex130504b}
Let $k$ be a field, and let $R=k[\![X]\!]$ be a power series ring in one variable. Let $E$ be the injective hull $E_R(k)$.
Example~\ref{prop120925b} implies that $E$ is $\m$-adically semidualizing over $R$. 

For this example, we define $\mathcal{A}^0_E(R)$ to be the class of all
$R$-modules $A$ such that 
the natural map
$\gamma^E_A\colon A\to \Hom{E}{\Otimes{E}{A}}$ is an isomorphism and
for all $i\geq 1$ we have
$\Tor{i}{E}{A}=0=\Ext{i}{E}{\Otimes{E}{A}}$.
Define $\catb^0_E(R)$ similarly.

It is straightforward to show that $R$ is in $\cata^0_E(R)$ in this example.
Based on work in the semidualizing case, one may expect $k=R/XR$ 
to be in $\mathcal{A}^0_{E}(R)$, as it is a module with finite flat dimension. However, this module fails the definition of $\mathcal{A}^{0}_{E}(R)$ in two ways. 

First, we have $\Hom{E}{\Otimes{E}{k}}\cong \Hom{E}{0} =0$; so it is not possible for 
the natural map $\gamma^E_k\colon k\to \Hom{E}{\Otimes{E}{k}}$ to be an isomorphism.
Second, we have $\Tor{1}{E}{k} \cong k$ and $\Tor{i}{E}{k}=0$ for all $i \neq 1$; this is straightforward to show using the Koszul complex $K^R(X)$ as a free resolution of $k$.

This example is even more troubling because it shows that $\cata^0_E(R)$ does not satisfy the 2-of-3 condition. Indeed, the following is an exact sequence
$$\xymatrix{
 0 \to R \xra X  R \to  k \to 0
}$$
and the first two modules are in $\cata_E(R)$, but the third is not.

Similarly, by dualizing the above exact sequence with respect to $E$, one obtains an augmented injective resolution of $k$
$$0\to k\to E\xra XE\to 0.$$
From this, we see that $\Ext 1Ek\cong k$ and $\Ext iEk=0$ for all $i\neq 1$.
As above, this shows that $k\notin\catb^0_E(R)$ and that $\catb^0_E(R)$ does not satisfy the 2-of-3 condition.
\end{ex}

The next result shows that the classes $\cata_M(R)$ and $\catb_M(R)$ do not have the same flaws as 
the classes from the previous example.

\begin{prop}\label{prop140112a}
Let $M$ be an $\mathfrak{a}$-adic semidualizing $R$-complex. 
Then the classes $\cata_M(R)$ and $\catb_M(R)$ are triangulated and thick.
\end{prop}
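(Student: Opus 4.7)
The plan is to verify the four standard conditions that together give a thick triangulated subcategory of $\catd(R)$: closure under isomorphism, closure under shift, the 2-of-3 property for exact triangles, and closure under direct summands. The argument for $\cata_M(R)$ and for $\catb_M(R)$ runs in parallel, exchanging the roles of the pair $(\Lotimes M-,\gamma^M)$ with $(\Rhom M-,\xi^M)$, so I will carry out the work only for $\cata_M(R)$. It is worth noting at the outset that the fundamental reason the derived Auslander class behaves better than the module-theoretic class $\cata^0_E(R)$ of Example~\ref{ex130504b} is simply that the derived functors $\Lotimes M-$ and $\Rhom M-$ are exact functors of triangulated categories, whereas their underived analogues are not.

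Closure under isomorphism is immediate from the functoriality of $\Lotimes M-$ together with the naturality of $\gamma^M$: if $Y\simeq X$ and $X\in\cata_M(R)$, then $\Lotimes MY\simeq\Lotimes MX\in\catdb(R)$ and $\gamma^M_Y$ is an isomorphism by two-out-of-three for isomorphisms in $\catd(R)$. Closure under shift is similarly formal, using that $\shift^i$ commutes with both $\Lotimes M-$ and $\Rhom M-$. For the 2-of-3 property, consider an exact triangle $X\to Y\to Z\to\shift X$ in $\catd(R)$ with two of $X,Y,Z$ in $\cata_M(R)$. Applying the exact functor $\Lotimes M-$ yields an exact triangle whose two corresponding vertices are bounded, so the third vertex lies in $\catdb(R)$ as well, since $\catdb(R)$ is a triangulated subcategory. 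Applying $\Rhom M-$ to this latter triangle produces a further exact triangle, and by naturality the collection $\{\gamma^M_X,\gamma^M_Y,\gamma^M_Z\}$ constitutes a morphism of exact triangles from the original to this composite. Two of these three components are isomorphisms by hypothesis, so the triangulated analogue of the 5-lemma forces the remaining component to be an isomorphism. Hence the third vertex lies in $\cata_M(R)$.

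Finally, for thickness, suppose $X\oplus X'\in\cata_M(R)$. Because $\Lotimes M-$ preserves finite direct sums, $\Lotimes M{(X\oplus X')}\simeq(\Lotimes MX)\oplus(\Lotimes MX')$, and since the homology of a direct sum is the direct sum of the homologies, each summand is bounded. Using additivity of the functors and naturality of $\gamma^M$ with respect to the coproduct inclusions and projections, we obtain the decomposition $\gamma^M_{X\oplus X'}\simeq\gamma^M_X\oplus\gamma^M_{X'}$, and a direct sum of morphisms is an isomorphism if and only if each summand is. Therefore $X,X'\in\cata_M(R)$, completing the verification. The only non-formal ingredient is the triangulated 5-lemma invoked in the 2-of-3 step, but this is a standard tool, so I do not anticipate any serious obstacle.
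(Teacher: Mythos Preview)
Your proof is correct and follows essentially the same approach as the paper: the paper's proof simply lists the three properties (closure under shift, the 2-of-3 property for exact triangles, and closure under direct summands) as ``straightforward facts'' without further justification, while you have filled in the details of each verification using exactness of the derived functors, naturality of $\gamma^M$, and the triangulated 5-lemma.
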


\begin{proof}
By definition, this follows from the next straightforward facts:
\begin{enumerate}[1.]
\item For each $i\in\bbz$, the classes $\cata_M(R)$ and $\catb_M(R)$ are closed under $\shift^i$.
\item
Given an exact triangle $X\to Y\to Z\to$ in $\catd(R)$,
if two of the three complexes $X,Y,Z$ are in $\cata_M(R)$ (repectively, in $\catb_M(R)$), then so is the third.
\item
For all $X,Y\in\catd_{\text b}(R)$, the direct sum
$X\oplus Y$ is in $\cata_M(R)$ if and only if 
$X$ and $Y$ are both in $\cata_M(R)$, and similarly for $\catb_M(R)$.\qedhere
\end{enumerate}
\end{proof}

Next, we prove the adic version of Foxby Equivalence, which is Theorem ~\ref{thm121116ax} in the introduction.
Note the support and co-support conditions in parts~\eqref{thm121116a2} and~\eqref{thm121116a3}, which are automatic in the semidualizing 
situation~\cite[Theorem 4.6]{christensen:scatac}.
Example~\ref{ex130530a} below shows that they are crucial in our more general setup. 
Note also the lack of any \emph{a priori} boundedness condition in parts~\eqref{thm121116a2} and~\eqref{thm121116a3}.

\begin{thm}\label{thm121116a}
Let $M$ be an $\mathfrak{a}$-adic semidualizing $R$-complex. 
\begin{enumerate}[\rm(a)]
\item\label{thm121116a1}
The functors $\Rhom{M}{-}: \mathcal{B}_M(R) \rightarrow \mathcal{A}_M(R)$ and $\Lotimes{M}{-}: \mathcal{A}_M(R) \rightarrow \mathcal{B}_M(R)$ are quasi-inverse equivalences. \item\label{thm121116a2}
An $R$-complex $Y\in\catd(R)$ is in $\mathcal{B}_{M}(R)$ if and only if $\Rhom{M}{Y} \in  \mathcal{A}_M(R)$ and $\supp_{R}(Y) \subseteq \VE(\fa)$.
\item\label{thm121116a3}
An $R$-complex  $X\in\catd(R)$ is in $\mathcal{A}_M(R)$ if and only if one has $\Lotimes{M}{X} \in  \mathcal{B}_M(R)$ and $\operatorname{co-supp}_{R}(X) \subseteq \VE(\fa)$.
\end{enumerate}
\end{thm}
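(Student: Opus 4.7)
The plan is to treat part~\eqref{thm121116a1} by formal manipulation of the adjunction $(-\lotimes M,\Rhom{M}{-})$, then deduce the characterizations in~\eqref{thm121116a2}--\eqref{thm121116a3} by combining the triangle identities of this adjunction with the support calculus of Fact~\ref{cor130528a} and the Koszul-style faithfulness meta-principle highlighted in Fact~\ref{thm130318aqqc}. For~\eqref{thm121116a1}, given $Y\in\catb_M(R)$, set $X:=\Rhom{M}{Y}$; by hypothesis $X\in\catdb(R)$ and $\xi^M_Y$ is an isomorphism, so $\Lotimes{M}{X}\simeq Y\in\catdb(R)$. The triangle identity $\Rhom{M}{\xi^M_Y}\circ\gamma^M_X=\id_X$ then forces $\gamma^M_X$ to be an isomorphism, giving $X\in\cata_M(R)$. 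The symmetric triangle identity $\xi^M_{\Lotimes{M}{X}}\circ(\Lotimes{M}{\gamma^M_X})=\id_{\Lotimes{M}{X}}$ handles the containment in the other direction, and naturality of $\gamma^M,\xi^M$ then yields the quasi-inverse equivalence.

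For the forward direction of~\eqref{thm121116a2}, if $Y\in\catb_M(R)$ then~\eqref{thm121116a1} provides $\Rhom{M}{Y}\in\cata_M(R)$, and the isomorphism $Y\simeq\Lotimes{M}{\Rhom{M}{Y}}$ combines with Fact~\ref{cor130528a} to give $\supp_R(Y)=\supp_R(M)\cap\supp_R(\Rhom{M}{Y})\subseteq\supp_R(M)\subseteq\VE(\fa)$, where the last inclusion uses $\fa$-adic finiteness of $M$. The forward implication in~\eqref{thm121116a3} is dual, with the formula $\cosupp_R(\Rhom{M}{X})=\supp_R(M)\cap\cosupp_R(X)$ playing the role of the tensor support formula.

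The main content lies in the reverse implication of~\eqref{thm121116a2}. Set $X:=\Rhom{M}{Y}$, so by hypothesis $X\in\cata_M(R)$, $\Lotimes{M}{X}\in\catdb(R)$, and $\gamma^M_X$ is an isomorphism; the triangle identity then forces $\Rhom{M}{\xi^M_Y}$ to be an isomorphism. Extending $\xi^M_Y\colon\Lotimes{M}{X}\to Y$ to an exact triangle $\Lotimes{M}{X}\to Y\to C\to$ and applying $\Rhom{M}{-}$ yields $\Rhom{M}{C}\simeq 0$; the support condition on $Y$ together with $\supp_R(\Lotimes{M}{X})\subseteq\supp_R(M)\subseteq\VE(\fa)$ propagates to $\supp_R(C)\subseteq\VE(\fa)$. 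The main obstacle is then to pass from $\Rhom{M}{C}\simeq 0$ and $\supp_R(C)\subseteq\VE(\fa)$ to $C\simeq 0$; this is precisely the sort of ``$\Rhom{M}{-}$ is faithful on derived $\fa$-torsion complexes'' result foreshadowed at the end of Fact~\ref{thm130318aqqc}, and its proof crucially exploits the defining isomorphism $\Comp{R}{a}\xrightarrow{\simeq}\Rhom{M}{M}$ of the $\fa$-adic semidualizing hypothesis via the machinery of~\cite{sather:afbha}. Granting this, $\xi^M_Y$ is an isomorphism and, as a byproduct, $Y\simeq\Lotimes{M}{X}\in\catdb(R)$, so $Y\in\catb_M(R)$.

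The reverse direction of~\eqref{thm121116a3} is formally dual. Assuming $\Lotimes{M}{X}\in\catb_M(R)$ and $\cosupp_R(X)\subseteq\VE(\fa)$, the twin triangle identity makes $\Lotimes{M}{\gamma^M_X}$ an isomorphism; forming the cone $C$ of $\gamma^M_X\colon X\to\Rhom{M}{\Lotimes{M}{X}}$ gives $\Lotimes{M}{C}\simeq 0$, and the cosupport formula from Fact~\ref{cor130528a} combined with subadditivity of $\cosupp_R$ on exact triangles yields $\cosupp_R(C)\subseteq\VE(\fa)$. The analogous faithfulness principle for $\Lotimes{M}{-}$ on complexes with cosupport in $\VE(\fa)$, again furnished by~\cite{sather:afbha}, then forces $C\simeq 0$, so $\gamma^M_X$ is an isomorphism and $X\in\cata_M(R)$.
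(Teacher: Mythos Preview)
Your argument is correct and matches the paper's essentially line for line: both proofs rest on the triangle identities for the adjunction $(\Lotimes{M}{-},\Rhom{M}{-})$ and then invoke a faithfulness principle (that $\Lotimes{M}{f}$ being an isomorphism forces $f$ to be one when the relevant cosupports lie in $\VE(\fa)$, and dually for $\Rhom{M}{-}$ and supports). The paper attributes this step to~\cite[Theorem~5.7]{sather:scc} rather than~\cite{sather:afbha}, and it uses only the condition $\supp_R(M)=\VE(\fa)$---a consequence of the semidualizing hypothesis---rather than the homothety isomorphism $\Comp{R}{a}\xrightarrow{\simeq}\Rhom{M}{M}$ itself.
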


\begin{proof}
Let $X\in\catdb(R)$, and set $Z:=\Lotimes{M}{X}$. Consider the defining morphisms $\xi_Z^M\colon \Lotimes{M}{\Rhom{M}{Z}} \rightarrow Z$ and 
$\gamma_X^M\colon X \rightarrow \Rhom{M}{Z}$. Then the induced morphism in $\catd(R)$
$$\xymatrix{
Z=\Lotimes{M}{X} \ar[rr]^-{\Lotimes{M}{\gamma_X^M}} && \Lotimes{M}{\Rhom{M}{Z}}
}$$
satisfies $\xi_Z^M\circ (\Lotimes{M}{\gamma_X^M})=\id_Z$. It follows that $\Lotimes{M}{\gamma_X^M}$ is an isomorphism if and only if $\xi_Z^M$ is one.

We verify the forward implication of part~\eqref{thm121116a3}.
To this end, assume for this paragraph  
that $X \in \mathcal{A}_M(R)$. Then we have $Z,\Rhom{M}{Z}\in\catdb(R)$, and ${\gamma_X^M}$ is an isomorphism
$X \xra\simeq \Rhom{M}{\Lotimes{M}{X}}$. Thus, Fact~\ref{cor130528a2} implies that
$$\operatorname{co-supp}_{R}(X) = \operatorname{co-supp}_{R}(\Rhom{M}{\Lotimes{M}{X}}) \subseteq\supp_R(M)\subseteq\VE(\fa).$$
The morphism $\Lotimes{M}{\gamma_X^M}$ is also an isomorphism, since ${\gamma_X^M}$ is one. From the previous paragraph, we know that $\xi_Z^M$ is also an isomorphism, and therefore, we have $Z \in \mathcal{B}_M(R)$, as desired. 

We now prove the converse of part~\eqref{thm121116a3}.
Assume that $Z=\Lotimes{M}{X} \in \mathcal{B}_M(R)$ and $\operatorname{co-supp}_{R}(X) \subseteq \VE(\fa)$. Then we have $Z,\Rhom{M}{Z}\in\catdb(R)$, 
and $\xi_Z^M$ is an isomorphism. 
By the first paragraph of this proof, the morphism $\Lotimes{M}{\gamma_X^M}$ is therefore an isomorphism. Fact~\ref{cor130528a2} implies 
$\cosupp_{R}(\Rhom{M}{Z}) \subseteq \VE(\fa)$. Since we also have
$\cosupp_R(X)\subseteq\VE(\fa)$, we conclude from~\cite[Theorem~5.7]{sather:scc}  that $\gamma_X^M$ is an isomorphism
in $\catd(R)$. Hence, we have $X\simeq\Rhom MZ\in\catdb(R)$.
As we also have $\Lotimes MX=Z\in\catdb(R)$, we conclude that $X \in \mathcal{A}_M(R)$, as desired.

Part~\eqref{thm121116a2} is verified similarly, and part~\eqref{thm121116a1} follows from~\eqref{thm121116a2} and~\eqref{thm121116a3}.
\end{proof}

Next, we show the necessity of the support conditions in Foxby Equivalence~\ref{thm121116a}.

\begin{ex}\label{ex130530a}
Let $k$ be a field, and set $R:=k[\![Y]\!]$ with $E:=E_R(k)$ and $\fa:=(Y)R$.
We show that $R\notin\catb_E(R)$ and $\Rhom{E}{R} \in\cata_E(R)$.
We also show that $E\notin\cata_E(R)$ and $\Lotimes EE\in\catb_E(R)$.
Notice that these facts do not contradict 
Foxby Equivalence~\ref{thm121116a}, because the support condition is not satisfied:
by faithful flatness and faithful injectivity we have $\supp_{R}(R) 
=\cosupp_R(E)=\spec(R)\not\subseteq \VE(\fa)$.

We first show $\Rhom{E}{R} \in \mathcal{A}_{E}(R)$. The augmented and truncated minimal semi-injective resolutions of $R$ are,
respectively, 
$$\xymatrix@R=1mm{
^{+}J = \quad 0 \ar[r] & R \ar[r] & Q(R) \ar[r] &E \ar[r] &0 \\
J = \quad  & 0 \ar[r] & Q(R) \ar[r] &E \ar[r] &0
}$$
where $Q(R)=k(\!(Y)\!)$ is the field of fractions of $R$.
An application of $\Hom{E}{-}$ to $J$ yields the complex
$$\xymatrix@C=8mm{
\Hom{E}{J} = \quad 0 \ar[r] & \Hom{E}{Q(R)} \ar[r] & \Hom{E}{E} \ar[r] & 0.
}$$
It is well-known that $\Hom{E}{Q(R)}= 0$ and $\Hom{E}{E} \cong R$. It follows that $\Rhom{E}{R} \simeq \Hom{E}{J}\simeq \shift^{-1}R\in\cata_E(R)$,
by Proposition~\ref{prop130612a}.

Next, consider the isomorphisms $$\shift^{-1}E \simeq \Lotimes{E}{\shift^{-1} R} \simeq \Lotimes{E}{\Rhom{E}{R}}.$$
It follows that the morphism $\delta^{E}_{R}\colon \Lotimes{E}{\Rhom{E}{R}} \to R$ is not isomorphism. Hence, we have $R \not\in \mathcal{B}_{E}(R)$. One can also deduce this from Foxby Equivalence~\ref{thm121116a}\eqref{thm121116a2} since we have $\supp_{R}(R) 
=\spec(R)\not\subseteq \VE(\fa)$.

Next, 
it is straightforward to show that
$\Lotimes EE\simeq\shift^1E\in\catb_E(R)$; see~\cite[Example~6.4]{kubik:hamm1} and Proposition~\ref{prop130612a}.
From this, we have
$$\Rhom E{\Lotimes EE}\simeq\Rhom E{\shift^1E}\simeq\shift^1\Rhom EE\simeq\shift^1R\not\simeq E$$
so $E\notin\cata_E(R)$; one can also deduce this using $\cosupp_R(E)$ as above.
\end{ex}

The next result shows what happens when you do remove the support conditions from Foxby Equivalence~\ref{thm121116a}.
Again, note the lack of boundedness assumptions. 

\begin{cor}\label{cor151221a}
Let $M$ be an $\fa$-adic semidualizing $R$-complex, and let $X\in\catd(R)$. 
\begin{enumerate}[\rm(a)]
\item\label{cor151221a1}
One has $\Rhom{M}{X} \in  \mathcal{A}_M(R)$ if and only if $\RG aX\in\catb_M(R)$.
When these conditions are satisfied, one has $\RG aX\simeq\Lotimes M{\Rhom MX}$.
\item\label{cor151221a2}
One has $\Lotimes{M}{X} \in  \mathcal{B}_M(R)$ if and only if $\LL aX\in\cata_M(R)$.
When these conditions are satisfied, one has $\LL aX\simeq\Rhom M{\Lotimes MX}$.
\end{enumerate}
\end{cor}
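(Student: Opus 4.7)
The plan is to deduce both parts from Foxby Equivalence (Theorem~\ref{thm121116a}) applied to $\RG aX$ and $\LL aX$, using that these two complexes automatically satisfy the relevant support and co-support conditions, together with $\supp_R(M)\subseteq\VE(\fa)$ so that Fact~\ref{fact151222c} identifies $\Rhom M{\RG aX}$ with $\Rhom MX$ and $\Lotimes M{\LL aX}$ with $\Lotimes MX$. No boundedness hypothesis on $X$ is required because Theorem~\ref{thm121116a}\eqref{thm121116a2}--\eqref{thm121116a3} imposes none on the input.

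For part~\eqref{cor151221a1}, I apply Theorem~\ref{thm121116a}\eqref{thm121116a2} to $\RG aX$: since $\supp_R(\RG aX)\subseteq\VE(\fa)$ always holds by Fact~\ref{cor130528a2}, this reduces to the biconditional
$$\RG aX\in\catb_M(R)\iff\Rhom M{\RG aX}\in\cata_M(R).$$
Since $\supp_R(M)\subseteq\VE(\fa)$, Fact~\ref{fact151222c} provides a natural isomorphism $\Rhom M{\RG aX}\simeq\Rhom MX$, so the right-hand condition is equivalent to $\Rhom MX\in\cata_M(R)$. For the ``when satisfied'' clause, the Bass class definition gives $\Lotimes M{\Rhom M{\RG aX}}\simeq\RG aX$, and substituting this isomorphism produces $\Lotimes M{\Rhom MX}\simeq\RG aX$.

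Part~\eqref{cor151221a2} proceeds dually via Theorem~\ref{thm121116a}\eqref{thm121116a3} applied to $\LL aX$. The automatic co-support condition $\cosupp_R(\LL aX)\subseteq\VE(\fa)$ from Fact~\ref{cor130528a2} reduces the criterion to $\LL aX\in\cata_M(R)\iff\Lotimes M{\LL aX}\in\catb_M(R)$, and Fact~\ref{fact151222c} (with $M$ playing the role of the complex with support inside $\VE(\fa)$) rewrites the right-hand side as $\Lotimes MX\in\catb_M(R)$. The final isomorphism follows from the Auslander class definition, which yields $\LL aX\simeq\Rhom M{\Lotimes M{\LL aX}}\simeq\Rhom M{\Lotimes MX}$. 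The only subtlety is verifying that the outer complex in Fact~\ref{fact151222c} can be taken to be $M$, which is immediate from the definition of an $\fa$-adic semidualizing complex.
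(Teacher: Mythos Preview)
Your proof is correct and follows essentially the same approach as the paper: both arguments rest on the isomorphism $\Rhom M{\RG aX}\simeq\Rhom MX$ from Fact~\ref{fact151222c}, the automatic support condition $\supp_R(\RG aX)\subseteq\VE(\fa)$ from Fact~\ref{cor130528a2}, and Foxby Equivalence~\ref{thm121116a}. Your organization is slightly cleaner: you invoke the biconditional of Theorem~\ref{thm121116a}\eqref{thm121116a2} once for $\RG aX$, whereas the paper treats the two implications separately (and for the converse passes through Theorem~\ref{thm121116a}\eqref{thm121116a3} applied to $\Rhom MX$ after first extracting $\Lotimes M{\Rhom MX}\simeq\RG aX\in\catb_M(R)$ and the co-support bound on $\Rhom MX$), but the content is the same.
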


\begin{proof}
We prove part~\eqref{cor151221a1}. 
From Fact~\ref{fact151222c}, we have
\begin{equation}\label{eq151223a}
\Rhom M{\RG aX}\simeq\Rhom MX.
\end{equation}

For the forward implication, assume  $\Rhom{M}{X} \in  \mathcal{A}_M(R)$.
From~\eqref{eq151223a}, we have
$\Rhom M{\RG aX}\in\cata_M(R)$.
Fact~\ref{cor130528a2} implies that $\supp_R(\RG aX)\subseteq\VE(\fa)$, so Foxby Equivalence~\ref{thm121116a}\eqref{thm121116a2} implies that
$\RG aX\in\catb_M(R)$.

For the converse, assume that $\RG aX\in\catb_M(R)$.
By definition, this yields the second  isomorphism in the next sequence.
$$\Lotimes M{\Rhom MX}\simeq\Lotimes M{\Rhom M{\RG aX}}\simeq\RG aX\in\catb_M(R)$$
The first isomorphism is again by~\eqref{eq151223a}.
Fact~\ref{cor130528a2} implies 
$$\cosupp_R(\Rhom MX)\subseteq\supp_R(M)\subseteq\VE(\fa)$$
so by Foxby Equivalence~\ref{thm121116a}\eqref{thm121116a2}, we have 
$\Rhom MX\in\cata_M(R)$, as desired. 
\end{proof}

\begin{ex}\label{ex151223a}
Let $k$ be a field, and set $R:=k[\![Y]\!]$ with $E:=E_R(k)$ and $\fa:=(Y)R$.
From Example~\ref{ex130530a} we have $R\notin\catb_E(R)$ and $\Rhom{E}{R} \in\cata_E(R)$.
On the other hand,  Corollary~\ref{cor151221a}\eqref{cor151221a1} implies that
$$\shift^{-1}E\simeq\RG aR\simeq\Lotimes E{\Rhom ER}\in\catb_E(R).$$
Note that this corroborates part of Proposition~\ref{prop130612a}.
Example~\ref{ex130530a} also shows that $E\notin\cata_E(R)$ and $\Lotimes EE\in\catb_E(R)$.
Corollary~\ref{cor151221a}\eqref{cor151221a1} implies that
$$\shift^1R\simeq\Rhom E{\Lotimes EE}\simeq\LL aE\in\cata_E(R)$$
which again bears witness to Proposition~\ref{prop130612a}.
\end{ex}

Our next results document adic versions of some standard facts,
starting with an augmentation of Proposition~\ref{prop130612a}.

\begin{prop}\label{prop140218a}
Let $M$ be an $\fa$-adic semidualizing $R$-complex. 
Then one has $R\in\cata_M(R)$ if and only if $R$ is $\fa$-adically complete.
\end{prop}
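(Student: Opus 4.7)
The plan is to handle the two directions separately. The backward direction is nearly immediate from Proposition~\ref{prop130612a}: if $R$ is $\fa$-adically complete, then $R=\Comp Ra$ as $R$-modules, and Proposition~\ref{prop130612a} gives $\Comp Ra\in\cata_M(R)$, so $R\in\cata_M(R)$.

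For the forward direction, the idea is to translate the condition $R\in\cata_M(R)$ into a statement about homothety morphisms and compare it with the defining property of an $\fa$-adic semidualizing complex. Under the canonical isomorphism $\Lotimes MR\simeq M$, the natural morphism $\gamma_R^M\colon R\to\Rhom M{\Lotimes MR}$ is identified with the homothety morphism $\chi_M^R\colon R\to\Rhom MM$. The construction in Remark~\ref{rmk140129a} makes it clear that $\chi_M^R$ factors through the canonical completion map as
$$R\to\Comp Ra\xra{\chi_M^{\Comp Ra}}\Rhom MM,$$
because the semi-injective resolution $M\xra\simeq J$ chosen there is already $\Comp Ra$-linear, so both $\chi_M^R$ and $\chi_M^{\Comp Ra}$ are induced by the very same endomorphism chain map on $J$.

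Now $R\in\cata_M(R)$ says that $\gamma_R^M$ is an isomorphism in $\catd(R)$, hence so is $\chi_M^R$. Meanwhile, the definition of $\fa$-adic semidualizing gives that $\chi_M^{\Comp Ra}$ is an isomorphism. Applying two-out-of-three to the factorization above, the completion map $R\to\Comp Ra$ must be an isomorphism in $\catd(R)$. Since both $R$ and $\Comp Ra$ are $R$-modules concentrated in degree zero, this is an honest isomorphism of $R$-modules, which is exactly the statement that $R$ is $\fa$-adically complete.

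The only subtlety to verify carefully is the compatibility of the two homothety morphisms via the completion map, but this is essentially built into Remark~\ref{rmk140129a} by the choice of a semi-injective resolution of $M$ consisting of $\Comp Ra$-modules and $\Comp Ra$-linear maps; otherwise the argument is a direct unwinding of definitions.
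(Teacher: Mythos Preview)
Your proof is correct, and the backward direction matches the paper exactly. For the forward direction, however, you take a genuinely different route from the paper.

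The paper's argument for the forward implication is: if $R\in\cata_M(R)$, then Foxby Equivalence (Theorem~\ref{thm121116a}\eqref{thm121116a3}) gives $\cosupp_R(R)\subseteq\VE(\fa)$, and then Fact~\ref{cor130528a3} yields $R\simeq\LL aR\simeq\Comp Ra$, so $R$ is $\fa$-adically complete. In other words, the paper routes the argument through the co-support machinery and derived local homology.

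Your argument is more elementary and self-contained: you identify $\gamma^M_R$ with the $R$-linear homothety $\chi^R_M$, observe that $\chi^R_M$ factors as $R\to\Comp Ra\xra{\chi^{\Comp Ra}_M}\Rhom MM$ (which is indeed built into the construction of Remark~\ref{rmk140129a}, since the resolution $J$ used there is semi-injective over $R$ and $\Comp Ra$-linear), and then apply two-out-of-three. This avoids Foxby Equivalence and the co-support formalism entirely. The paper's approach, on the other hand, illustrates the organizing principle of the paper---that (co)support conditions govern membership in Foxby classes---and so fits more naturally into the surrounding narrative, even if it invokes heavier tools.
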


\begin{proof}
For the forward implication, assume that we have $R\in\cata_M(R)$.
Foxby Equivalence~\ref{thm121116a}\eqref{thm121116a3} implies that $\cosupp_R(R)\subseteq\VE(\fa)$,
so  $R\simeq\LL aR\simeq\Comp Ra$ by Fact~\ref{cor130528a3}, thus $R$ is $\fa$-adically complete.
Conversely, if $R$ is $\fa$-adically complete, then $R\cong\Comp Ra\in\cata_M(R)$ by Proposition~\ref{prop130612a}.
\end{proof}

The next remark is for use in the sequel.

\begin{disc}\label{disc150527a}
Let $M$ be an $\fa$-adic semidualizing $R$-complex, and let $X,Y\in\catd(R)$.
Since $\Comp Ra$ is flat over $R$ and the homology of $K$ is $\fa$-torsion,
the natural morphism $\iota\colon K\to \Lotimes K{\Comp Ra}$ is an isomorphism in $\catd(R)$.
This explains the  vertical isomorphism in  the following commutative diagram in $\catd(R)$:
\begin{equation}
\label{eq150527a}
\begin{split}
\xymatrix@C=20mm{
\Lotimes KX
\ar[r]^-{\Lotimes{K}{\gamma^M_X}}\ar[d]_{\Lotimes \iota X}^\simeq
&\Lotimes K{\Rhom M{\Lotimes MX}} \\
\Lotimes{\Lotimes K{\Comp Ra}}X
\ar[r]^-{\Lotimes{\Lotimes K{\chi^{\Comp Ra}_M}}X}_-\simeq
&\Lotimes{\Lotimes K{\Rhom MM}}X
\ar[u]_{\Lotimes K{\omega_{MMX}}}.
}
\end{split}
\end{equation}
The morphism $\omega_{MMX}$ is tensor-evaluation.
The lower horizontal morphism is an isomorphism since $M$ is $\fa$-adically semidualizing.
Similarly, we have the next commutative diagram
where $\nu:=\Rhom K{\Rhom{\chi^{\Comp Ra}_M}{Y}}$.
\begin{equation}
\label{eq150527b}
\begin{split}
\xymatrix@C=4mm{
\Rhom K{\Lotimes M{\Rhom MY}}\ar[r]^-{\Rhom K{\xi^M_Y}}\ar[dd]_{\Rhom K{\theta_{MMY}}}
&\Rhom KY
\\
&\Rhom{\Lotimes{\Comp Ra}K}Y\ar[u]^\simeq_{\Rhom\iota Y} \\
\Rhom K{\Rhom{\Rhom MM}Y}
\ar[r]_-\simeq^-{\nu}
&\Rhom K{\Rhom{\Comp Ra}Y}\!\!\!\!\!\!\!\!\!
\ar[u]^\simeq
}
\end{split}
\end{equation}
The unspecified  isomorphism is adjointness,
and  $\theta_{MMY}$ is tensor-evaluation.
\end{disc}

It is straightforward to show that the trivial semidualizing complex $R$ has trivial Auslander and Bass classes:
$\cata_R(R)=\catdb(R)=\catb_R(R)$.
Our next result generalizes this to the adic  situation.
Note that Foxby Equivalence~\ref{thm121116a} shows that this is as trivial as things get in this setting.
Also, see~\cite[Section~5]{sather:asc} for characterizations of the property $\fd_R(M)<\infty$ in this context.

\begin{prop}\label{prop150525b}
Let $M$ be an $\fa$-adic semidualizing complex with $\fd_R(M)<\infty$, e.g., $M=\RG aR$.
\begin{enumerate}[\rm(a)]
\item\label{prop150525b1}
The class $\cata_M(R)$ consists of all $X\in\catdb(R)$ such that $\cosupp_R(X)\subseteq\VE(\fa)$, 
that is, the complexes $X\simeq\LL aZ$ for some $Z\in\catdb(R)$.
\item\label{prop150525b2}
The class $\catb_M(R)$ consists of all $Y\in\catdb(R)$ with $\supp_R(Y)\subseteq\VE(\fa)$,
that is, the complexes $X\simeq\RG aZ$ for some $Z\in\catdb(R)$.
\end{enumerate}
\end{prop}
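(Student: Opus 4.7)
The plan is to prove both parts using Foxby Equivalence~\ref{thm121116a} together with the diagrammatic technique from Remark~\ref{disc150527a}. First, within each part, the equivalence of the two descriptions is immediate: by Fact~\ref{cor130528a3} one has $\cosupp_R(X)\subseteq\VE(\fa)$ if and only if $X\simeq\LL aX$, so one may take $Z=X$; conversely, for any $Z\in\catdb(R)$ we have $\LL aZ\simeq\Rhom{\RG aR}Z\in\catdb(R)$ because $\pd_R(\RG aR)<\infty$ (Fact~\ref{fact130619b}), and $\cosupp_R(\LL aZ)\subseteq\VE(\fa)$ by Fact~\ref{cor130528a2}. Part~\eqref{prop150525b2} is analogous, using $Y\simeq\RG aY\simeq\Lotimes{\RG aR}Y$ whenever $\supp_R(Y)\subseteq\VE(\fa)$. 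The forward inclusions are also immediate: if $X\in\cata_M(R)$, then Foxby Equivalence~\ref{thm121116a}\eqref{thm121116a3} gives $\cosupp_R(X)\subseteq\VE(\fa)$, and if $Y\in\catb_M(R)$, then~\eqref{thm121116a2} gives $\supp_R(Y)\subseteq\VE(\fa)$.

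For the backward inclusion in~\eqref{prop150525b1}, let $X\in\catdb(R)$ satisfy $\cosupp_R(X)\subseteq\VE(\fa)$. Then $\Lotimes MX\in\catdb(R)$ since $\fd_R(M)<\infty$. In diagram~\eqref{eq150527a}, the lower horizontal arrow is an isomorphism because $\chi^{\Comp Ra}_M$ is, and the right vertical morphism $\Lotimes K{\omega_{MMX}}$ is an isomorphism by Fact~\ref{fact151222a}\eqref{fact151222a1}, applied with $V=K$, $Y=M$, and $Z=X$ using $\fd_R(M)<\infty$. Hence $\Lotimes K{\gamma_X^M}$ is an isomorphism. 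Since $\cosupp_R(X)\subseteq\VE(\fa)$ by hypothesis and $\cosupp_R(\Rhom M{\Lotimes MX})\subseteq\supp_R(M)\subseteq\VE(\fa)$ by Fact~\ref{cor130528a2}, Fact~\ref{thm130318aqqc} yields that $\gamma_X^M$ itself is an isomorphism. In particular $\Rhom M{\Lotimes MX}\simeq X\in\catdb(R)$, so $X\in\cata_M(R)$.

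The backward inclusion in~\eqref{prop150525b2} is parallel, but the main obstacle is verifying that $\Rhom MY\in\catdb(R)$, since the assumption $\fd_R(M)<\infty$ alone does not yield $\pd_R(M)<\infty$ for the non-finitely-generated complex $M$. The device is the adic finiteness of $M$: the complex $\Lotimes KM$ lies in $\catdfb(R)$ with finite flat, hence finite projective, dimension, so $\Rhom{\Lotimes KM}Y\in\catdb(R)$, and by adjointness this is $\Rhom K{\Rhom MY}$. Because $\cosupp_R(\Rhom MY)\subseteq\supp_R(M)\subseteq\VE(\fa)$ by Fact~\ref{cor130528a2}, Fact~\ref{lem150604a} upgrades this to $\Rhom MY\in\catdb(R)$, whence $\Lotimes M{\Rhom MY}\in\catdb(R)$ as well. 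With boundedness in hand, diagram~\eqref{eq150527b} together with Fact~\ref{fact151222a}\eqref{fact151222a2} (valid because $\fd_R(M)<\infty$) shows that $\Rhom K{\xi_Y^M}$, and hence $\Lotimes K{\xi_Y^M}$ via $\Lotimes K-\simeq\shift^n\Rhom K-$, is an isomorphism. Applying Fact~\ref{thm130318aqqc} with the support hypothesis---both $\Lotimes M{\Rhom MY}$ and $Y$ have support in $\VE(\fa)$---completes the proof that $\xi_Y^M$ is an isomorphism and $Y\in\catb_M(R)$.
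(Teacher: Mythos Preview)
Your proof is correct and follows the paper's strategy closely. The one substantive difference is in part~\eqref{prop150525b2}: the paper does not work around the apparent gap between $\fd_R(M)<\infty$ and $\pd_R(M)<\infty$ but instead invokes~\cite[Theorem~6.1]{sather:afcc} directly to conclude that $\pd_R(M)<\infty$ for the $\fa$-adically finite complex $M$, whence $\Rhom MY\in\catdb(R)$ immediately. Your workaround---using $\Lotimes KM\in\catdfb(R)$ to get finite projective dimension, then adjointness and Koszul self-duality together with Fact~\ref{lem150604a}---is a valid and more self-contained alternative that avoids importing that external result, at the cost of a couple of extra lines. Everything else (the reduction via Foxby Equivalence, the use of diagrams~\eqref{eq150527a}--\eqref{eq150527b}, and the appeal to Facts~\ref{thm130318aqqc} and~\ref{fact151222a}) matches the paper's argument.
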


\begin{proof}
MGM Equivalence~\cite[Theorem~6.11]{yekutieli:hct} shows that 
$X\in\catdb(R)$ satisfies $\cosupp_R(X)\subseteq\VE(\fa)$ if and only if
we have $X\simeq\LL aZ$ for some $Z\in\catdb(R)$, and similarly for supp.
(Parts of this are in Facts~\ref{cor130528a2}--\ref{cor130528a3}.)
Thus, by Foxby Equivalence~\ref{thm121116a}, we need only show the following:
(1) if $X\in\catdb(R)$ satisfies $\cosupp_R(X)\subseteq\VE(\fa)$, then $X\in\cata_M(R)$, and
(2) if $Y\in\catdb(R)$ satisfies $\supp_R(Y)\subseteq\VE(\fa)$, then $Y\in\catb_M(R)$.

(1) Let $X\in\catdb(R)$ be such that $\cosupp_R(X)\subseteq\VE(\fa)$.
Since $\fd_R(M)<\infty$, we have $\Lotimes MX\in\catdb(R)$.
Also, Fact~\ref{cor130528a2} implies that
$$\cosupp_R(\Rhom{M}{\Lotimes MX})\subseteq\supp_R(M)\subseteq\VE(\fa).$$
Thus, by Fact~\ref{thm130318aqqc}, to show that $\gamma^M_X$ is an isomorphism,
it suffices to show that $\Lotimes{K}{\gamma^M_X}$ is an isomorphism.
The morphism $\Lotimes K{\omega_{MMX}}$ from Remark~\ref{disc150527a}
is an isomorphism by Fact~\ref{fact151222a}\eqref{fact151222a1}.
It follows from the diagram~\eqref{eq150527a} that $\Lotimes{K}{\gamma^M_X}$ is an isomorphism, as desired.

(2) Let $Y\in\catdb(R)$ be such that $\supp_R(Y)\subseteq\VE(\fa)$.
Since $M$ has finite projective dimension by~\cite[Theorem~6.1]{sather:afcc}, we have $\Rhom MY\in\catdb(R)$. 
Thus, to complete the proof, one argues as in part~(1) to show that the morphism $\xi^M_Y$ is an isomorphism in $\catd(R)$,
using~\eqref{eq150527b} and Fact~\ref{fact151222a}\eqref{fact151222a2}.
\end{proof}

Given a semidualizing $R$-complex $C$, we know from~\cite[Proposition 4.4]{christensen:scatac}  that $\catac(R)$ contains all complexes of finite
flat dimension, and $\catbc(R)$ contains all complexes of finite injective dimension. 
The next result is our version of this fact in the adic setting. 

\begin{prop}\label{prop150527a}
Let $M$ be an $\fa$-adic semidualizing $R$-complex. 
\begin{enumerate}[\rm(a)]
\item \label{prop150527a1}
The Auslander class $\cata_M(R)$ contains all $R$-complexes 
$X\in\catdb(R)$ of finite flat dimension such that $\cosupp_R(X)\subseteq\VE(\fa)$.
\item \label{prop150527a2}
The Bass class $\catb_M(R)$ contains all $R$-complexes $Y\in\catdb(R)$ of finite injective dimension such that $\supp_R(Y)\subseteq\VE(\fa)$.
\end{enumerate}
\end{prop}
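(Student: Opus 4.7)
The plan is to mimic the argument of Proposition~\ref{prop150525b} almost verbatim, using the finite flat (or injective) dimension of $X$ (or $Y$) to power the tensor-evaluation (or Hom-evaluation) isomorphism, in place of the finite flat dimension of $M$ that was used there. The technical backbone is Fact~\ref{fact151222a}, which grants these evaluation isomorphisms after applying $\Lotimes K-$ as long as one of the two dimensions involved is finite.

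For part~\eqref{prop150527a1}, let $X\in\catdb(R)$ with $\fd_R(X)<\infty$ and $\cosupp_R(X)\subseteq\VE(\fa)$. First, since $M\in\catdb(R)$ and $\fd_R(X)<\infty$, we have $\Lotimes MX\in\catdb(R)$, so it remains to show that $\gamma_X^M\colon X\to\Rhom M{\Lotimes MX}$ is an isomorphism. By Fact~\ref{cor130528a2} we have
\[
\cosupp_R\bigl(\Rhom M{\Lotimes MX}\bigr)\subseteq\supp_R(M)\subseteq\VE(\fa),
\]
so both source and target of $\gamma_X^M$ have cosupport in $\VE(\fa)$. By Fact~\ref{thm130318aqqc}, it therefore suffices to show that $\Lotimes K{\gamma_X^M}$ is an isomorphism. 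For this, I would invoke the commutative diagram~\eqref{eq150527a} of Remark~\ref{disc150527a}: the vertical map $\Lotimes\iota X$ is an isomorphism, the lower horizontal map is an isomorphism because $M$ is $\fa$-adically semidualizing, and the remaining vertical map $\Lotimes K{\omega_{MMX}}$ is an isomorphism by Fact~\ref{fact151222a}\eqref{fact151222a1} applied with $V=K$, $Y=M$, $Z=X$ (using here the hypothesis $\fd_R(X)<\infty$, in the role formerly played by $\fd_R(M)$). Chasing the diagram yields that $\Lotimes K{\gamma_X^M}$ is an isomorphism, whence $X\in\cata_M(R)$.

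Part~\eqref{prop150527a2} is the formal dual. Let $Y\in\catdb(R)$ with $\id_R(Y)<\infty$ and $\supp_R(Y)\subseteq\VE(\fa)$. Since $M\in\catdb(R)$ and $\id_R(Y)<\infty$, we have $\Rhom MY\in\catdb(R)$, so the task is to show that $\xi_Y^M\colon\Lotimes M{\Rhom MY}\to Y$ is an isomorphism. Using Fact~\ref{cor130528a2},
\[
\supp_R\bigl(\Lotimes M{\Rhom MY}\bigr)=\supp_R(M)\cap\supp_R(\Rhom MY)\subseteq\VE(\fa),
\]
so both ends of $\xi_Y^M$ have support in $\VE(\fa)$. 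By Fact~\ref{thm130318aqqc}, it is enough to check that $\Rhom K{\xi_Y^M}$ is an isomorphism (using the Koszul self-duality $\Lotimes K-\simeq\shift^n\Rhom K-$). In the commutative diagram~\eqref{eq150527b} of Remark~\ref{disc150527a}, the unnamed adjointness morphism, the morphisms $\Rhom\iota Y$, and $\nu$ are all isomorphisms (the latter because $\chi_M^{\Comp Ra}$ is an isomorphism), and $\Rhom K{\theta_{MMY}}$ is an isomorphism by Fact~\ref{fact151222a}\eqref{fact151222a2} with $V=K$, $Y=M$, $Z=Y$, now using $\id_R(Y)<\infty$. The diagram then forces $\Rhom K{\xi_Y^M}$ to be an isomorphism, so $Y\in\catb_M(R)$.

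There is no real obstacle here beyond the diagram-chase; the only thing worth flagging is that one must double check that Fact~\ref{fact151222a} can be applied with the finiteness assumption shifted from $M$ to $X$ (respectively, $Y$), which it explicitly permits, and that the cosupport/support checks needed to invoke Fact~\ref{thm130318aqqc} come for free from $\supp_R(M)\subseteq\VE(\fa)$ together with Fact~\ref{cor130528a2}.
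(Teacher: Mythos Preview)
Your proof is correct and follows essentially the same route as the paper's own argument: reduce to showing that $\Lotimes K{\gamma^M_X}$ (respectively $\Rhom K{\xi^M_Y}$) is an isomorphism via Fact~\ref{thm130318aqqc}, then chase diagram~\eqref{eq150527a} (respectively~\eqref{eq150527b}) using Fact~\ref{fact151222a} with the finite-dimension hypothesis shifted onto $X$ (respectively $Y$). Your version is in fact slightly more explicit than the paper's, which simply refers back to the proof of Proposition~\ref{prop150525b} without spelling out the (co)support verifications.
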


\begin{proof}
We deal with part~\eqref{prop150527a1}.
Let $X\in\catdb(R)$ with $\fd_R(X)<\infty$ be such that $\cosupp_R(X)\subseteq\VE(\fa)$.
The condition $\fd_R(X)<\infty$ implies that $\Lotimes MX\in\catdb(R)$.
To show that the morphism
$\gamma^M_X\colon X\to \Rhom M{\Lotimes MX}$ is an isomorphism in $\catd(R)$,
it suffices by Fact~\ref{thm130318aqqc} to show that the induced morphism
$\Lotimes K{\gamma^M_X}$ is an isomorphism.
This is accomplished
using Fact~\ref{fact151222a}\eqref{fact151222a1} with the diagram~\eqref{eq150527a} from Remark~\ref{disc150527a}, as in the proof of 
Proposition~\ref{prop150525b}.
\end{proof}

\begin{cor}\label{cor150527a}
Let $M$ be an $\fa$-adic semidualizing $R$-complex, and fix an ideal $\fb\supseteq\fa$. 
Let $L$  be the Koszul complex over $R$ on a finite generating sequence for $\fb$.
\begin{enumerate}[\rm(a)]
\item \label{cor150527a1}
The Auslander class $\cata_M(R)$ contains every $R$-complex of the form $\LL bF$ and $\Lotimes LF$ where 
$F\in\catdb(R)$ has finite flat dimension.
In particular, we have
$\Comp Rb,L\in\cata_M(R)$.
\item \label{cor150527a2}
The Bass class $\catb_M(R)$ contains 
every complex of the form $\RG bI$ and $\Lotimes LI$ where $I\in\catdb(R)$ has finite injective dimension.
\end{enumerate}
\end{cor}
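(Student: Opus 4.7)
The plan is to reduce every claimed membership to Proposition~\ref{prop150527a}, which for each candidate complex $Z$ requires (i) that $Z\in\catdb(R)$ has finite flat dimension (part~(a)) or finite injective dimension (part~(b)), and (ii) that its co-support (respectively, support) is contained in $\VE(\fa)$. Since $\fa\subseteq\fb$ implies $\VE(\fb)\subseteq\VE(\fa)$, it will suffice throughout to establish the stronger containment relative to $\VE(\fb)$.

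For part~(a), let $F\in\catdb(R)$ with $\fd_R(F)<\infty$. For $\LL bF$, Fact~\ref{fact130619b} gives $\LL bF\simeq\Rhom{\RG bR}{F}$ together with $\pd_R(\RG bR)<\infty$, and the last sentence of that fact already records $\fd_R(\LL bF)<\infty$; meanwhile Fact~\ref{cor130528a2} yields $\cosupp_R(\LL bF)\subseteq\VE(\fb)$. For $\Lotimes LF$, the complex $L$ is bounded with finitely generated free terms, so $\fd_R(\Lotimes LF)<\infty$, and its homology is annihilated by $\fb$, so the argument at the end of Fact~\ref{cor130528a3} gives $\cosupp_R(\Lotimes LF)\subseteq\VE(\fb)$. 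Both conclusions now follow from Proposition~\ref{prop150527a}\eqref{prop150527a1}. For the special cases, taking $F=R$ yields $L\simeq\Lotimes LR\in\cata_M(R)$ immediately; for $\Comp Rb$, I will note that it is flat over $R$ and $\fb$-adically complete and separated, so Fact~\ref{cor130528a3} gives $\cosupp_R(\Comp Rb)\subseteq\VE(\fb)$, and Proposition~\ref{prop150527a}\eqref{prop150527a1} applies once more.

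For part~(b), the argument is dual: given $I\in\catdb(R)$ with $\id_R(I)<\infty$, represent $I$ by a bounded complex of injective $R$-modules; applying $\Gamma_{\fb}$ termwise produces a bounded complex of injectives, since for noetherian $R$ the $\fb$-torsion submodule of an injective is again injective, so $\id_R(\RG bI)<\infty$, and Fact~\ref{cor130528a2} gives $\supp_R(\RG bI)\subseteq\VE(\fb)$. For $\Lotimes LI$, the boundedness and finite-rank freeness of $L$ make $\Lotimes LI$ a bounded complex of modules of finite injective dimension, and its homology is $\fb$-torsion so $\supp_R(\Lotimes LI)\subseteq\VE(\fb)$ by the argument at the end of Fact~\ref{cor130528a3}. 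Proposition~\ref{prop150527a}\eqref{prop150527a2} then concludes.

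There is no genuine obstacle; the proof amounts to bookkeeping that combines the finite homological dimension of the two building blocks ($\RG bR$ or $\Comp Rb$, and $L$) with the elementary (co)support computations of Facts~\ref{cor130528a2}--\ref{cor130528a3}. The one point deserving care is the claim $\id_R(\RG bI)<\infty$, which rests on the classical preservation of injectivity by $\Gamma_{\fb}$ over noetherian rings.
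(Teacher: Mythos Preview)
Your proposal is correct and follows essentially the same approach as the paper: verify the hypotheses of Proposition~\ref{prop150527a} by checking finite homological dimension together with the appropriate (co)support containment in $\VE(\fb)\subseteq\VE(\fa)$, invoking Facts~\ref{fact130619b} and~\ref{cor130528a2}--\ref{cor130528a3}. The only cosmetic difference is that the paper obtains $\Comp Rb\in\cata_M(R)$ simply as the case $F=R$ of $\LL bF$ (since $\LL bR\simeq\Comp Rb$), whereas you argue directly that $\Comp Rb$ is flat with co-support in $\VE(\fb)$; both routes land on Proposition~\ref{prop150527a}\eqref{prop150527a1}.
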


\begin{proof}
We prove part~\eqref{cor150527a1}.
The complex $\LL bF$ has finite flat dimension, by Fact~\ref{fact130619b}.
Since we also have $\cosupp_R(\LL bF)\subseteq\VE(\fb)\subseteq\VE(\fa)$ by Fact~\ref{cor130528a2},
we deduce from Proposition~\ref{prop150527a}\eqref{prop150527a1} that $\LL bF\in\cata_M(R)$,
and similarly for $\Lotimes LF$, using Fact~\ref{cor130528a3}.
The conclusion $\Comp Rb,L\in\cata_M(R)$ is from the special case $F=R$.
\end{proof}

Our next three results are versions of~\cite[Corollary 2.10 and Theorem 2.11]{takahashi:hasm}
and~\cite[Theorem 1.1]{totushek:hdsc} for the adic semidualizing context.
As with the previous results, a major difference is the inclusion of a (co)support condition.

\begin{prop}\label{prop150527b}
Let $M$ be  $\fa$-adic semidualizing over $R$, and let $X\in\catdb(R)$. 
The following conditions are equivalent.
\begin{enumerate}[\rm(i)]
\item \label{prop150527b1}
One has $\fd_R(\Rhom MX)<\infty$ and $\supp_R(X)\subseteq\VE(\fa)$. 
\item \label{prop150527b2}
There is a complex $F\in\catdb(R)$ with $\fd_R(F)<\infty$ such that $X\simeq\Lotimes MF$ and $\cosupp_R(F)\subseteq\VE(\fa)$.
\item \label{prop150527b3}
There is a complex $G\in\catdb(R)$ with $\fd_R(G)<\infty$ such that $X\simeq\Lotimes MG$.
\end{enumerate}
If these conditions hold, then $F\simeq\Rhom MX\simeq\LL aG$ and $X\in\catb_M(R)$.
\end{prop}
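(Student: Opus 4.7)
The plan is to establish the cycle of implications \eqref{prop150527b2}$\Rightarrow$\eqref{prop150527b3}$\Rightarrow$\eqref{prop150527b1}$\Rightarrow$\eqref{prop150527b2}, extracting the additional conclusions ($F\simeq\Rhom MX\simeq\LL aG$ and $X\in\catb_M(R)$) as byproducts along the way. The step \eqref{prop150527b2}$\Rightarrow$\eqref{prop150527b3} is immediate by taking $G=F$.

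For \eqref{prop150527b3}$\Rightarrow$\eqref{prop150527b1}, suppose $X\simeq\Lotimes MG$ with $G\in\catdb(R)$ of finite flat dimension. Fact~\ref{cor130528a2} gives $\supp_R(X)=\supp_R(M)\cap\supp_R(G)\subseteq\supp_R(M)\subseteq\VE(\fa)$, handling the support condition. For the flat dimension of $\Rhom MX$, I would replace $G$ by $\LL aG$: since $\supp_R(M)\subseteq\VE(\fa)$, Fact~\ref{fact151222c} furnishes an isomorphism $\Lotimes MG\simeq\Lotimes M{\LL aG}$. The complex $\LL aG$ lies in $\catdb(R)$ with finite flat dimension by Fact~\ref{fact130619b}, and Fact~\ref{cor130528a2} places $\cosupp_R(\LL aG)\subseteq\VE(\fa)$, so Proposition~\ref{prop150527a}\eqref{prop150527a1} gives $\LL aG\in\cata_M(R)$. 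Consequently
$$\Rhom MX\simeq\Rhom M{\Lotimes M{\LL aG}}\simeq\LL aG,$$
which simultaneously shows $\fd_R(\Rhom MX)<\infty$ and records the isomorphism $\Rhom MX\simeq\LL aG$ asserted in the final sentence.

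For \eqref{prop150527b1}$\Rightarrow$\eqref{prop150527b2}, I would set $F:=\Rhom MX$. The assumption $\fd_R(F)<\infty$ forces $F\in\catdb(R)$, and Fact~\ref{cor130528a2} yields $\cosupp_R(F)=\supp_R(M)\cap\cosupp_R(X)\subseteq\VE(\fa)$. Proposition~\ref{prop150527a}\eqref{prop150527a1} then places $F$ in $\cata_M(R)$; combined with the hypothesis $\supp_R(X)\subseteq\VE(\fa)$, Foxby Equivalence~\ref{thm121116a}\eqref{thm121116a2} puts $X$ in $\catb_M(R)$. By definition of $\catb_M(R)$, the counit $\xi^M_X\colon\Lotimes MF\to X$ is an isomorphism, so $X\simeq\Lotimes MF$, completing \eqref{prop150527b2} and at the same time producing the final claims $F\simeq\Rhom MX$ and $X\in\catb_M(R)$ (with $F\simeq\LL aG$ supplied by the previous step).

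The main subtlety I anticipate is the careful management of support and co-support conditions at each juncture: since $X$ and $G$ are not a priori in any Foxby class, the argument must first arrange the appropriate \emph{a priori} bounds before invoking Proposition~\ref{prop150527a} or Foxby Equivalence. The nonroutine maneuver is the substitution of $G$ by $\LL aG$ in \eqref{prop150527b3}$\Rightarrow$\eqref{prop150527b1}, which bridges an arbitrary $G$ of finite flat dimension and the Auslander class; everything else reduces to bookkeeping with Facts~\ref{cor130528a} and~\ref{fact151222c}.
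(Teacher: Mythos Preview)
Your argument is correct and uses the same ingredients as the paper's proof---Proposition~\ref{prop150527a}\eqref{prop150527a1}, Foxby Equivalence~\ref{thm121116a}\eqref{thm121116a2}, and the substitution $G\mapsto\LL aG$ via Fact~\ref{fact151222c}---only arranged into the reverse cycle: the paper proves \eqref{prop150527b1}$\Rightarrow$\eqref{prop150527b3}$\Rightarrow$\eqref{prop150527b2}$\Rightarrow$\eqref{prop150527b1}, whereas you prove \eqref{prop150527b2}$\Rightarrow$\eqref{prop150527b3}$\Rightarrow$\eqref{prop150527b1}$\Rightarrow$\eqref{prop150527b2}. One small remark: to conclude $F\simeq\Rhom MX$ for an \emph{arbitrary} $F$ satisfying \eqref{prop150527b2} (not just the one you construct in \eqref{prop150527b1}$\Rightarrow$\eqref{prop150527b2}), you should note that taking $G=F$ in your \eqref{prop150527b3}$\Rightarrow$\eqref{prop150527b1} step gives $\Rhom MX\simeq\LL aF\simeq F$, the last isomorphism by Fact~\ref{cor130528a3} since $\cosupp_R(F)\subseteq\VE(\fa)$.
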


\begin{proof}
\eqref{prop150527b1}$\implies$\eqref{prop150527b3}.
Assume  that $\supp_R(X)\subseteq\VE(\fa)$ and that the complex $G:=\Rhom MX\in\catdb(R)$ satisfies $\fd_R(G)<\infty$.
Since Fact~\ref{cor130528a2}  implies that
$$\cosupp_R(G)=\cosupp_R(\Rhom MX)\subseteq\supp_R(M)\subseteq\VE(\fa)$$
we know from Proposition~\ref{prop150527a}\eqref{prop150527a1} that
$G=\Rhom MX\in\cata_M(R)$. The condition   $\supp_R(X)\subseteq\VE(\fa)$ implies that $X\in\catb_M(R)$, by
Foxby Equivalence~\ref{thm121116a}\eqref{thm121116a2}.
Thus, we have $X\simeq \Lotimes M{\Rhom MX}=\Lotimes MG$, as desired.

\eqref{prop150527b3}$\implies$\eqref{prop150527b2}.
Assume that there is a complex $G\in\catdb(R)$ with $\fd_R(G)<\infty$ such that $X\simeq\Lotimes MG$.
Set $F:=\LL aG$, which satisfies $\fd_R(F)<\infty$ and $\cosupp_R(F)\subseteq\VE(\fa)$, 
by Facts~\ref{fact130619b} and~\ref{cor130528a2}. 
The first and last isomorphisms in the next sequence are by assumption
$$\Lotimes M{F}
\simeq\Lotimes M{\LL aG}
\simeq\Lotimes MG
\simeq X$$
and the second isomorphism is from Fact~\ref{fact151222c}.
Also, in the next sequence, the first isomorphism is by definition and the last one is from the previous display
$$\LL aG\simeq F\simeq\Rhom{M}{\Lotimes MF}\simeq\Rhom MX.$$
The second isomorphism is because $F\in\cata_M(R)$; see Proposition~\ref{prop150527a}\eqref{prop150527a1}.
This completes the proof of this implication and explains one of the additional claims
in the statement of the proposition. 

\eqref{prop150527b2}$\implies$\eqref{prop150527b1}. 
Assume that there is a complex $F\in\catdb(R)$ with $\fd_R(F)<\infty$ such that $X\simeq\Lotimes MF$ and $\cosupp_R(F)\subseteq\VE(\fa)$.
Proposition~\ref{prop150527a}\eqref{prop150527a1} implies that $F\in\cata_M(R)$,
so we have $X\simeq\Lotimes MF\in\catb_M(R)$ by Foxby Equivalence~\ref{thm121116a}\eqref{thm121116a2}.
Moreover, this implies that we have
$$F\simeq\Rhom{M}{\Lotimes MF}\simeq\Rhom MX$$
so $\fd_R(\Rhom MX)=\fd_R(F)<\infty$. The isomorphism $X\simeq\Lotimes MF$ implies 
$$\supp_R(X)=\supp_R(\Lotimes MF)\subseteq\supp_R(M)\subseteq\VE(\fa)$$
by Fact~\ref{cor130528a2}.
This completes the proof of this implication and explains the remaining claims
in the statement of the proposition. 
\end{proof}

The next two results are proved similarly to the previous one.

\begin{prop}\label{prop150527c}
Let $M$ be an $\fa$-adic semidualizing $R$-complex, and let $X\in\catdb(R)$. 
The following conditions are equivalent.
\begin{enumerate}[\rm(i)]
\item \label{prop150527c1}
One has $\pd_R(\Rhom MX)<\infty$ and $\supp_R(X)\subseteq\VE(\fa)$. 
\item \label{prop150527c2}
There is a complex $P\in\catdb(R)$ with $\pd_R(P)<\infty$ such that $X\simeq\Lotimes MP$ and $\cosupp_R(P)\subseteq\VE(\fa)$.
\item \label{prop150527c3}
There is a complex $Q\in\catdb(R)$ with $\pd_R(Q)<\infty$ such that $X\simeq\Lotimes MQ$.
\end{enumerate}
If these conditions hold, then $P\simeq\Rhom MX\simeq\LL aQ$ and $X\in\catb_M(R)$.
\end{prop}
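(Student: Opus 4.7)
The plan is to mimic the proof of Proposition~\ref{prop150527b} step for step, substituting projective dimension for flat dimension throughout, and leveraging the trivial implication $\pd_R(-)<\infty\Rightarrow\fd_R(-)<\infty$ so that Proposition~\ref{prop150527a}\eqref{prop150527a1} still delivers membership in $\cata_M(R)$ once the requisite cosupport hypothesis is verified. I would run the cycle \eqref{prop150527c1}$\implies$\eqref{prop150527c3}$\implies$\eqref{prop150527c2}$\implies$\eqref{prop150527c1}, and then harvest the trailing claims ($P\simeq\Rhom MX\simeq\LL aQ$ and $X\in\catb_M(R)$) along the way.

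For \eqref{prop150527c1}$\implies$\eqref{prop150527c3}, I would set $Q:=\Rhom MX$. Fact~\ref{cor130528a2} gives $\cosupp_R(Q)\subseteq\supp_R(M)\subseteq\VE(\fa)$, and $\fd_R(Q)\leq\pd_R(Q)<\infty$, so Proposition~\ref{prop150527a}\eqref{prop150527a1} yields $Q\in\cata_M(R)$. The hypothesis $\supp_R(X)\subseteq\VE(\fa)$ together with Foxby Equivalence~\ref{thm121116a}\eqref{thm121116a2} places $X$ in $\catb_M(R)$, so $X\simeq\Lotimes M{\Rhom MX}=\Lotimes MQ$.

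For \eqref{prop150527c3}$\implies$\eqref{prop150527c2}, assume $X\simeq\Lotimes MG$ with $\pd_R(G)<\infty$ and set $P:=\LL aG$. Fact~\ref{cor130528a2} gives $\cosupp_R(P)\subseteq\VE(\fa)$. The isomorphism $\LL aG\simeq\Rhom{\RG aR}G$ from Fact~\ref{fact130619b}, combined with $\pd_R(\RG aR)<\infty$ (also Fact~\ref{fact130619b}) and $\pd_R(G)<\infty$, shows $\pd_R(P)<\infty$. Finally, Fact~\ref{fact151222c} applied to $M$ (using $\supp_R(M)\subseteq\VE(\fa)$) gives $\Lotimes MP\simeq\Lotimes M{\LL aG}\simeq\Lotimes MG\simeq X$. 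For \eqref{prop150527c2}$\implies$\eqref{prop150527c1}, Proposition~\ref{prop150527a}\eqref{prop150527a1} applied to $P$ gives $P\in\cata_M(R)$, so $X\simeq\Lotimes MP\in\catb_M(R)$ via Foxby Equivalence~\ref{thm121116a}\eqref{thm121116a3}, whence $\Rhom MX\simeq\Rhom M{\Lotimes MP}\simeq P$ has $\pd_R(\Rhom MX)=\pd_R(P)<\infty$. The containment $\supp_R(X)\subseteq\supp_R(M)\subseteq\VE(\fa)$ is immediate from Fact~\ref{cor130528a2}.

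The main obstacle is the preservation step $\pd_R(G)<\infty\Rightarrow\pd_R(\LL aG)<\infty$, which is not automatic since $\Rhom$ of two complexes of finite projective dimension need not have finite projective dimension in general. The leverage comes from the explicit telescope model used in Fact~\ref{fact130619b} to establish $\pd_R(\RG aR)<\infty$: computing $\Rhom{\RG aR}G$ against this model reduces matters to finitely many finite-pd operations, so the overall projective dimension stays finite. Apart from this verification, every other ingredient (cosupport tracking, Foxby Equivalence, and the concluding identifications of $P$ with $\Rhom MX$ and $\LL aG$) is a direct transcription of the corresponding step in the proof of Proposition~\ref{prop150527b}.
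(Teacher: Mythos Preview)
Your approach is precisely the paper's: the authors say only that this result is ``proved similarly'' to Proposition~\ref{prop150527b}, and you transcribe that proof with $\pd$ in place of $\fd$. You also correctly isolate the one step where the transcription is not automatic, namely $\pd_R(G)<\infty\Rightarrow\pd_R(\LL aG)<\infty$.

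However, the justification you offer for that step does not work as written. Computing $\Rhom{\RG aR}{G}$ via the telescope model $T$ gives the bounded complex $\Hom{T}{G}$, whose terms have the form $\Hom{T_i}{G_j}\cong G_j^{\bbn}$, since each $T_i$ is a countably generated free module. Over a noetherian ring such countable products are flat---this is exactly why Fact~\ref{fact130619b} yields $\fd_R(\LL aG)<\infty$---but products of projectives need not be projective, nor is it evident that they have finite projective dimension over a general noetherian ring (one does get this under $\dim(R)<\infty$ via Raynaud--Gruson, for instance). Your phrase ``finitely many finite-pd operations'' conflates boundedness of the Hom complex with finiteness of its projective dimension; these are different things. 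The paper does not spell out this step either, so you match the authors' level of detail, but the explanation you give does not actually close the gap.
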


\begin{prop}\label{prop150527d}
Let $M$ be an $\fa$-adic semidualizing  $R$-complex, and let $Y\in\catdb(R)$. 
The following conditions are equivalent.
\begin{enumerate}[\rm(i)]
\item \label{prop150527d1}
One has $\id_R(\Lotimes MX)<\infty$ and $\cosupp_R(Y)\subseteq\VE(\fa)$. 
\item \label{prop150527d2}
There is a complex $I\in\catdb(R)$ with $\id_R(I)<\infty$ such that $Y\simeq\Rhom MI$ and $\supp_R(I)\subseteq\VE(\fa)$.
\item \label{prop150527d3}
There is a complex $J\in\catdb(R)$ with $\id_R(J)<\infty$ such that $Y\simeq\Rhom MJ$.
\end{enumerate}
When these conditions hold, one has $I\simeq\Lotimes MY\simeq\RG aJ$ and $Y\in\cata_M(R)$.
\end{prop}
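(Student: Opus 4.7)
The plan is to mirror the proof of Proposition~\ref{prop150527b}, but with tensor products and Homs swapped, and support and co-support swapped, in line with the symmetry built into Foxby Equivalence~\ref{thm121116a}. I will establish the implications in the cycle (i)$\implies$(iii)$\implies$(ii)$\implies$(i), and pick up the auxiliary identifications $I\simeq\Lotimes MY\simeq\RG aJ$ and $Y\in\cata_M(R)$ along the way.

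For (i)$\implies$(iii), set $J:=\Lotimes MY$. By Fact~\ref{cor130528a2} we have $\supp_R(J)\subseteq\supp_R(M)\subseteq\VE(\fa)$, so Proposition~\ref{prop150527a}\eqref{prop150527a2} yields $J\in\catb_M(R)$. Combined with the co-support hypothesis on $Y$, Foxby Equivalence~\ref{thm121116a}\eqref{thm121116a3} places $Y$ in $\cata_M(R)$, and we obtain the witness $Y\simeq\Rhom M{\Lotimes MY}=\Rhom MJ$. For (iii)$\implies$(ii), set $I:=\RG aJ$. Fact~\ref{cor130528a2} gives $\supp_R(I)\subseteq\VE(\fa)$, and I plan to verify $\id_R(I)<\infty$ by resolving $J$ by a bounded complex of injective $R$-modules and observing that the $\fa$-torsion functor sends injective modules to injective modules (since $\Gamma_\fa(E_R(R/\p))$ equals $E_R(R/\p)$ when $\p\in\VE(\fa)$ and vanishes otherwise), producing a bounded injective resolution of $\RG aJ$. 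The remaining identification $Y\simeq\Rhom MI$ then follows from the chain $\Rhom MI=\Rhom M{\RG aJ}\simeq\Rhom MJ\simeq Y$, where the middle step is Fact~\ref{fact151222c} applied to $M$ (whose support lies in $\VE(\fa)$).

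For (ii)$\implies$(i), Proposition~\ref{prop150527a}\eqref{prop150527a2} gives $I\in\catb_M(R)$, so Foxby Equivalence~\ref{thm121116a}\eqref{thm121116a1} yields $Y\simeq\Rhom MI\in\cata_M(R)$. Part~\eqref{thm121116a3} of the same theorem then produces $\cosupp_R(Y)\subseteq\VE(\fa)$, and the computation $\Lotimes MY\simeq\Lotimes M{\Rhom MI}\simeq I$ (from $I\in\catb_M(R)$) gives both $\id_R(\Lotimes MY)=\id_R(I)<\infty$ and the auxiliary isomorphism $I\simeq\Lotimes MY$. Combining with $I=\RG aJ$ from the previous step completes the relation $\Lotimes MY\simeq\RG aJ$, and $Y\in\cata_M(R)$ has already been recorded.

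The step I expect to require the most care is the verification that $\id_R(\RG aJ)<\infty$ in (iii)$\implies$(ii); everything else is essentially bookkeeping using Foxby Equivalence~\ref{thm121116a}, the stability result Proposition~\ref{prop150527a}\eqref{prop150527a2}, and the key natural isomorphism from Fact~\ref{fact151222c} (which is precisely what makes the support/co-support conditions disappear from the statement of (iii) as compared with (ii)). The finite injective dimension argument should be routine via the $\Gamma_\fa$-on-injectives calculation above, but it is the only spot where a non-formal homological input enters.
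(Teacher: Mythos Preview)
Your proposal is correct and follows exactly the approach the paper intends: the paper merely says this result is ``proved similarly to'' Proposition~\ref{prop150527b}, and your argument is precisely the expected dualization (swap $\otimes^{\mathbf L}$ with $\Rhom{}{}$, $\supp$ with $\cosupp$, $\LLno a$ with $\RGno a$, flat with injective dimension, and use parts~\eqref{thm121116a3} and~\eqref{prop150527a2} in place of~\eqref{thm121116a2} and~\eqref{prop150527a1}). Your justification that $\id_R(\RG aJ)<\infty$ via the Matlis decomposition of injectives is the standard one and is the right substitute for the appeal to Fact~\ref{fact130619b} in the flat case.
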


We conclude this section a technical, but useful result, \emph{\`a la}~\cite[Proposition~4.8]{christensen:scatac}.

\begin{lem}\label{lem151126a}
Let $M$ be an $\mathfrak{a}$-adic semidualizing $R$-complex, and let $X\in\catdb(R)$.
\begin{enumerate}[\rm(a)]
\item \label{lem151126a1}
If $X\in\cata_M(R)$, then we have
$$\inf(M)+\sup(X)\leq\sup(\Lotimes MX)\leq\sup(X)+\sup(M)+n.$$
\item \label{lem151126a2}
If $X\in\catb_M(R)$, then we have
$$\inf(X)-\sup(M)-2n\leq\inf(\Rhom MX)\leq\inf(X)-\inf(M).$$
\end{enumerate}
\end{lem}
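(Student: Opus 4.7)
The plan is to prove the four inequalities in two pairs: the ``easy'' bounds (the lower bound of (a) and the upper bound of (b)) will follow directly from the defining isomorphisms of $\cata_M(R)$ and $\catb_M(R)$ together with standard sup/inf estimates, while the ``hard'' bounds (the upper bound of (a) and the lower bound of (b)) will require Koszul reduction to the finite-homology setting, using that $\Lotimes KM \in \catdfb(R)$ by the $\fa$-adic finiteness of $M$ (Fact~\ref{thm130612a}).

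First, for the easy bounds: since $X \in \cata_M(R)$ gives $X \simeq \Rhom{M}{\Lotimes MX}$, Fact~\ref{disc151112a}\eqref{disc151112a5} yields
$$\sup(X) = \sup(\Rhom M{\Lotimes MX}) \leq \sup(\Lotimes MX) - \inf(M),$$
which rearranges to $\inf(M) + \sup(X) \leq \sup(\Lotimes MX)$. Dually, $X \in \catb_M(R)$ gives $X \simeq \Lotimes{M}{\Rhom MX}$, and the standard K\"unneth-type inequality $\inf(\Lotimes{Y}{Z}) \geq \inf(Y) + \inf(Z)$ produces $\inf(X) \geq \inf(M) + \inf(\Rhom MX)$, rearranging to the upper bound $\inf(\Rhom MX) \leq \inf(X) - \inf(M)$.

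Next, for the upper bound of (a), I will set $Y := \Lotimes MX \in \catdb(R)$ and $N := \Lotimes KM \in \catdfb(R)$. Since $\supp_R(Y) \subseteq \supp_R(M) \subseteq \VE(\fa)$ by Fact~\ref{cor130528a}, the homology of $Y$ is $\fa$-torsion. A Koszul spectral sequence argument---using that every nonzero $\fa$-torsion module has a nonzero $\fa$-socle, so that the corner term $E^2_{n,\sup(Y)} \cong \HH_{\sup(Y)}(Y)[\fa]$ survives to $E^\infty$---gives $\sup(\Lotimes KY) = \sup(Y) + n$. Combined with the tensor-associativity isomorphism $\Lotimes KY \simeq \Lotimes NX$, the estimate $\sup(N) \leq \sup(M) + n$, and an auxiliary bound $\sup(\Lotimes NX) \leq \sup(N) + \sup(X) + n$ valid for $N \in \catdfb(R)$ with $\supp_R(N) \subseteq \VE(\fa)$ (which follows from a parallel Koszul argument), this yields the desired $\sup(\Lotimes MX) \leq \sup(M) + \sup(X) + n$. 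The lower bound of (b) will be proved dually: setting $W := \Rhom MX$, I will verify $\supp_R(W), \cosupp_R(W) \subseteq \VE(\fa)$ (using Fact~\ref{cor130528a} and a tensor-evaluation argument with $\RG aR$), and apply Koszul self-duality $\Rhom KR \simeq \shift^{-n}K$ together with the isomorphism $\Rhom KW \simeq \Rhom NX$ and the $\inf$-dual auxiliary bound $\inf(\Rhom NX) \geq \inf(X) - \sup(N) - n$ to conclude $\inf(W) \geq \inf(X) - \sup(M) - 2n$.

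The main technical obstacle will be establishing the auxiliary bound $\sup(\Lotimes NX) \leq \sup(N) + \sup(X) + n$ (and its $\inf$-dual) for $N \in \catdfb(R)$ with $\supp_R(N) \subseteq \VE(\fa)$. The $+n$ slack is essential: the naive bound $\sup(\Lotimes NX) \leq \sup(N) + \sup(X)$ can fail when $N$ does not have finite projective dimension. The bound should nonetheless hold in the $\fa$-adic setting because $N$ lies in the thick subcategory of $\catdfb(R)$ generated by the Koszul complex $K$, with the $+n$ contribution arising from the Koszul length and propagating through the exact triangles that build $N$ from shifted copies of $K$.
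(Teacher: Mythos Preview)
Your treatment of the easy bounds is correct and matches the paper exactly. The problem is with your auxiliary bound
\[
\sup(\Lotimes NX)\ \leq\ \sup(N)+\sup(X)+n
\qquad\text{for }N\in\catdfb(R),\ \supp_R(N)\subseteq\VE(\fa),
\]
which is simply false. Take $R=k[[x,y]]/(xy)$, $\fa=(x)$ (so $n=1$), $N=R/(x)\cong k[[y]]$, and $X=R/(y)\cong k[[x]]$. Then $N\in\catdfb(R)$ with $\supp_R(N)=\VE(\fa)$, and $X$ is a finitely generated module over the $\m$-adically complete ring $R$, hence derived $(x)$-complete, so even $\cosupp_R(X)\subseteq\VE(\fa)$. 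Resolving $N$ by $\cdots\xrightarrow{y}R\xrightarrow{x}R\xrightarrow{y}R\xrightarrow{x}R$ and tensoring with $R/(y)$ gives $\Tor^R_{2i}(N,X)\cong k$ for all $i\geq 0$, so $\sup(\Lotimes NX)=\infty$ while your bound predicts $\sup\leq 1$. The dual bound $\inf(\Rhom NX)\geq\inf(X)-\sup(N)-n$ fails by the same example. Your thick-subcategory heuristic does not save this: building $N$ from shifted copies of $K$ via triangles does not come with a uniform length bound of $n$, so the ``$+n$ slack'' cannot be obtained that way.

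The paper avoids this trap by never passing to $N=\Lotimes KM$ on the tensor side. It keeps the isomorphism $X\simeq\Rhom M{\Lotimes MX}$ and applies a \emph{lower} bound on $\sup(\Rhom MY)$, namely
\[
\sup(\Rhom MY)\ \geq\ \sup(Y)-\sup(M)-n,
\]
valid because $M$ itself is $\fa$-adically finite with $\supp_R(M)=\VE(\fa)\supseteq\supp_R(Y)$; this is precisely \cite[Proposition~3.1(a)]{sather:afbha}. Plugging in $Y=\Lotimes MX$ yields the upper bound in~(a) immediately. Part~(b) is handled the same way, using $X\simeq\Lotimes M{\Rhom MX}$ and the companion inequality \cite[Proposition~3.6(a)]{sather:afbha}. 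The moral is that the $\fa$-adic finiteness of $M$ is what makes the $\Rhom M{-}$ estimate work; once you strip $M$ down to $N=\Lotimes KM$ and switch to a $\Lotimes N{-}$ estimate, you lose the very structure that controls the amplitude.
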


\begin{proof}
\eqref{lem151126a1}
The condition $X\in\cata_M(R)$ implies that $X\simeq\Rhom M{\Lotimes MX}$.
This explains the first step in the next display.
\begin{align*}
\sup(X)
&=\sup(\Rhom M{\Lotimes MX}) 
\geq\sup(\Lotimes MX)-\sup(M)-n
\end{align*}
The second step follows from~\cite[Proposition~3.1(a)]{sather:afbha}; the hypotheses of this result are satisfied since we have
$\Lotimes MX\in\catdb(R)$, and 
$M$ is $\fa$-adically finite with $\supp_R(M)=\VE(\fa)\supseteq\supp_R(\Lotimes MX)$ by~\cite[Proposition~7.17]{sather:scc}.
This yields the second of the inequalities from the statement of the lemma. 
For the first one, we argue similarly, using Fact~\ref{disc151112a}\eqref{disc151112a5}:
\begin{align*}
\sup(X)
&=\sup(\Rhom M{\Lotimes MX}) 
\leq\sup(\Lotimes MX)-\inf(M)
\end{align*}

\eqref{lem151126a2}
These inequalities are verified similarly, using~\cite[Proposition~3.6(a)]{sather:afbha}.
\end{proof}

\section{Stability}\label{sec130818dz}

In this section, we document various stability results (and special converses) for Foxby classes, including Theorem~\ref{prop151123ar}
from the introduction. 

\subsection*{Sums and Products}

Let $M$ be an $\mathfrak{a}$-adic semidualizing $R$-complex. 
Then $\cata_M(R)$ and $\catb_M(R)$ always fail to be closed under arbitrary direct sums and products. 
The main reason for this is that if $N_i\in\catdb(R)$, then in general we have $\bigoplus_iN_i,\prod_iN_i\notin\catdb(R)$.
However, the next example shows that this can fail for other reasons.

\begin{ex}\label{ex151125a}
Let $k$ be a field, and set $R:=k[\![Y]\!]$ with $E:=E_R(k)$ and $\fa:=(Y)R$.
Then we have $R\in\cata_E(R)$ and $E\in\catb_E(R)$ by Proposition~\ref{prop130612a}.

We claim that the direct sum $R^{(\bbn)}$ is not contained in $\cata_E(R)$. 
Indeed, from~\cite[Lemma~3.2]{sather:elclh} we know that the natural map $R^{(\bbn)}\to\Lambda^{\fa}(R^{(\bbn)})$ is not an isomorphism.
Since $R^{(\bbn)}$ is flat, this says that the natural morphism $R^{(\bbn)}\to\LL m{R^{(\bbn)}}$ is not an isomorphism in $\catd(R)$,
so $\cosupp_R(R^{(\bbn)})\not\subseteq\VE(\fa)$ by Fact~\ref{cor130528a3}. 
Thus, Foxby Equivalence~\ref{thm121116a}\eqref{thm121116a3} shows that
$R^{(\bbn)}\notin\cata_E(R)$.

Similarly, the fact that  the product $E^{\bbn}$ is not $\fa$-torsion implies that $\supp_R(E^{\bbn})\not\subseteq\VE(\fa)$, 
so
$E^{\bbn}\notin\catb_E(R)$.

On the other hand, it is straightforward to show that we do have $R^{\bbn}\in\cata_E(R)$ and $E^{(\bbn)}\in\catb_E(R)$
in this setting.\footnote{We do not verify this explicitly since it follows directly from the next result
using the conditions $R\in\cata_E(R)$ and $E\in\catb_E(R)$.}
The next result augments this fact significantly.
It is not clear that this result has even been documented in the 
case of semidualizing complexes (that is, the case $\fa=0$). 
\end{ex}

\begin{thm}\label{prop151126a}
Let $M$ be an $\fa$-adic semidualizing $R$-complex,
and consider a set $\{N_i\}_{i\in I}\subseteq\catdb(R)$  such that there are integers $j,t$ such that
$j\leq\inf(N_i)$ and $\sup(N_i)\leq t$ for all $i\in I$.
\begin{enumerate}[\rm(a)]
\item \label{prop151126a1}
One has $N_i\in\cata_M(R)$ for all $i\in I$ if and only if $\prod_iN_i\in\cata_M(R)$.
\item \label{prop151126a2}
One has $N_i\in\catb_M(R)$ for all $i\in I$ if and only if $\bigoplus_iN_i\in\catb_M(R)$.
\end{enumerate}
\end{thm}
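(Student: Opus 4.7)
The plan is to handle the backward directions uniformly and then the harder forward directions by reducing to the Koszul level. For the backward implications of both (a) and (b), each $N_j$ is a retract of $\prod_iN_i$ (respectively, a summand of $\bigoplus_iN_i$) via the standard section-projection morphisms in the additive category $\catd(R)$, so Proposition~\ref{prop140112a} (thickness of the Foxby classes) forces $N_j$ into the class whenever the product or coproduct is.

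For the forward direction of (a), I would assume each $N_i\in\cata_M(R)$ and set $N:=\prod_iN_i\in\catdb(R)$ by uniform boundedness. Since $\Rhom{\kappa(\fp)}{-}$ commutes with products in the second slot, one gets $\cosupp_R(N)=\bigcup_i\cosupp_R(N_i)\subseteq\VE(\fa)$. To show $\Lotimes{M}{N}\in\catdb(R)$, I would use $\supp_R(\Lotimes{M}{N})\subseteq\supp_R(M)\subseteq\VE(\fa)$ together with Fact~\ref{lem150604a} to reduce to boundedness of $\Lotimes{K}{(\Lotimes{M}{N})}\simeq\Lotimes{L}{N}$, where $L:=\Lotimes{K}{M}\in\catdfb(R)$. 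The key input is that the natural map $\Lotimes{L}{N}\to\prod_i\Lotimes{L}{N_i}$ is an isomorphism: the Tor spectral sequence with $E_2^{p,q}=\Tor{p}{\HH^q(L)}{-}$ respects products since each $\HH^q(L)$ is finitely presented over the noetherian ring $R$, and the abutment on the right is uniformly bounded by Lemma~\ref{lem151126a}\eqref{lem151126a1}. Finally, $\gamma^M_N$ is shown to be an isomorphism via Fact~\ref{thm130318aqqc}: both source and target have cosupport in $\VE(\fa)$, so it suffices that $\Lotimes{K}{\gamma^M_N}$ be an isomorphism, which follows by identifying it with $\prod_i\Lotimes{K}{\gamma^M_{N_i}}$ using that $\Lotimes{K}{-}\simeq\shift^n\Rhom{K}{-}$ and $\Rhom{M}{-}$ both commute with products.

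The forward direction of (b) is dual. Setting $N:=\bigoplus_iN_i$, tensor with a residue field commutes with coproducts, so $\supp_R(N)\subseteq\VE(\fa)$. The containment $\cosupp_R(\Rhom{M}{N})\subseteq\supp_R(M)\subseteq\VE(\fa)$ is automatic from Fact~\ref{cor130528a}, so Fact~\ref{lem150604a} reduces boundedness of $\Rhom{M}{N}$ to that of $\Lotimes{K}{\Rhom{M}{N}}\simeq\shift^n\Rhom{L}{N}$, via tensor-Hom adjunction for the perfect complex $K$. Since each $\HH^q(L)$ is finitely generated, $\Ext{p}{\HH^q(L)}{-}$ commutes with coproducts, and a dual Ext spectral sequence gives $\Rhom{L}{N}\simeq\bigoplus_i\Rhom{L}{N_i}$, uniformly bounded by Lemma~\ref{lem151126a}\eqref{lem151126a2}. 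The isomorphism $\xi^M_N$ is obtained by the analogous Koszul-level reduction through Fact~\ref{thm130318aqqc}.

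The main obstacle will be the failure of $\Lotimes{M}{-}$ to commute with arbitrary products and of $\Rhom{M}{-}$ to commute with arbitrary coproducts, since $M$ is not generally perfect; this is precisely why Example~\ref{ex151125a} produces product/coproduct failures for the wrong combinations of class and operation. The obstacle is navigated by reducing to the Koszul level through Fact~\ref{thm130318aqqc}, where the finite generation of $\HH^\ast(\Lotimes{K}{M})$ lets the relevant Tor and Ext functors commute with the products/coproducts in play, while uniform boundedness ensures clean convergence of the spectral sequences involved.
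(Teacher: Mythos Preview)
Your argument is correct and follows essentially the same strategy as the paper's proof: the backward implications come from thickness, and the forward implications hinge on commuting the relevant functor with products (resp.\ coproducts) after passing to the Koszul level, with Lemma~\ref{lem151126a} supplying the uniform amplitude bounds.

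There is one genuine difference worth noting. For the isomorphism $\gamma^M_{\prod_i N_i}$, the paper works directly: it invokes \cite[Theorem~4.7(b)]{sather:afcc} to obtain $\Lotimes{M}{\prod_iN_i}\simeq\prod_i\Lotimes{M}{N_i}$ outright, and then a short commutative square (together with the standard fact that $\Rhom{M}{-}$ preserves products) finishes the job. You instead first observe $\cosupp_R(\prod_iN_i)\subseteq\VE(\fa)$ and use Fact~\ref{thm130318aqqc} to reduce to checking that $\Lotimes{K}{\gamma^M_N}$ is an isomorphism; at that level only the weaker fact $\Lotimes{L}{\prod_iN_i}\simeq\prod_i\Lotimes{L}{N_i}$ with $L=\Lotimes{K}{M}\in\catdfb(R)$ is needed, and this you supply yourself. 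Your route is thus more self-contained---it effectively inlines the Koszul-reduction argument that presumably underlies the cited external theorem---while the paper's is shorter once that result is in hand. A small suggestion: the spectral-sequence phrasing for $\Lotimes{L}{\prod_iN_i}\simeq\prod_i\Lotimes{L}{N_i}$ is heavier than necessary; since $L\in\catdfb(R)$ admits a bounded-below resolution by finitely generated free modules and the $N_i$ are uniformly bounded, a direct degreewise comparison does the job and avoids any convergence worries.
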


\begin{proof}
Note that the conditions $j\leq\inf(N_i)$ and $\sup(N_i)\leq t$ for all $i\in I$ guarantee that
$\prod_iN_i,\bigoplus_iN_i\in\catdb(R)$.

\eqref{prop151126a1}
One implication follows from Proposition~\ref{prop140112a}.
For the converse, assume that we have $N_i\in\cata_M(R)$ for all $i\in I$.
As we have noted, we have $\prod_iN_i\in\catdb(R)$.
We need to show next that $\Lotimes M{\prod_iN_i}\in\catdb(R)$.
Note that the condition $M,\prod_iN_i\in\catdb(R)$ implies $\Lotimes M{\prod_iN_i}\in\catd_+(R)$,
so we need to show that $\Lotimes M{\prod_iN_i}\in\catd_-(R)$.
To this end, we use the following isomorphism from~\cite[Lemma~4.5]{sather:afcc}.
\begin{align*}
\Lotimes K{\Lotimes M{\prod_iN_i}}
&\simeq\prod_i(\Lotimes K{\Lotimes M{N_i}})
\end{align*}
From this, we have the second step in the next display;
the third step is routine.
\begin{align*}
\sup\left(\Lotimes M{\prod_iN_i}\right)
&=\sup\left(\Lotimes K{\Lotimes M{\prod_iN_i}}\right)-n\\
&=\sup\left(\prod_i(\Lotimes K{\Lotimes M{N_i}})\right)-n \\
&=\sup_i\{\sup(\Lotimes K{(\Lotimes M{N_i})})\}-n \\
&=\sup_i\{n+\sup(\Lotimes M{N_i})\}-n \\
&\leq\sup_i\{2n+\sup(M)+\sup(N_i)\} -n\\
&\leq n+\sup(M)+t
\end{align*}
Fact~\ref{cor130528a2} shows that the supports of $\Lotimes M{\prod_iN_i}$ and $\Lotimes M{N_i}$
are contained in $\VE(\fa)$;
so, the first and fourth steps are from~\cite[Lemma~3.1(a)]{sather:afcc}. The fifth step is from
Lemma~\ref{lem151126a}\eqref{lem151126a1}, and the last one is by assumption.

Next, we need to show that the morphism
$$\gamma^M_{\prod_i\! N_i}\colon \prod_iN_i\to\PRhom M{\Lotimes M{\prod_iN_i}}$$ 
is an isomorphism. 
We consider the following commutative diagram in $\catd(R)$.
$$\xymatrix{
\prod_iN_i\ar[r]^-{\gamma^M_{\prod_i\!N_i}}\ar[d]_{\prod_i\gamma^M_{N_i}}^\simeq
&\PRhom M{\Lotimes M{\prod_iN_i}} \ar[d]^\simeq \\
\prod_i\Rhom M{\Lotimes M{N_i}}\ar[r]^-\simeq
&{\PRhom M{\prod_i\Lotimes M{N_i}}} 
}$$ 
The unspecified vertical isomorphism is from~\cite[Theorem~4.7(b)]{sather:afcc},
and the unspecified horizontal one is standard for products.
We conclude that $\gamma^M_{\prod_i\!N_i}$ is an isomorphism, so $\prod_iN_i\in\cata_M(R)$, as desired.

\eqref{prop151126a2}
This is verified similarly, using
Lemma~\ref{lem151126a}\eqref{lem151126a2}
with~\cite[Lemma~3.2(a), Lemma~4.6, and Theorem~4.8(b)]{sather:afcc}.
\end{proof}

\subsection*{Finite Flat Dimension}

We now turn our attention to stability results for Foxby classes with respect to $\Lotimes F-$ where $F$
is an $R$-complex of finite flat dimension. 
Again, it is worth noting the lack of \emph{a priori} boundedness assumptions on $X$ in many of these
(and subsequent) results,
beginning with Theorem~\ref{prop151123ar} from the introduction.
Recall that, given a ring homomorphism $\vf\colon R\to S$, 
we let $\vf^*\colon\spec(S)\to\spec(R)$ denote the induced map.
See~\cite[Proposition~5.6(c)]{sather:afcc}
for perspective on the condition $\vf^*(\supp_S(F))\supseteq\VE(\fa)\bigcap\mspec(R)$.

\begin{thm}\label{prop151123a}
Let $M$ be an $\fa$-adic semidualizing $R$-complex.
Let $F\in\catdb(R)$ be such that $\fd_R(F)<\infty$, and let $X\in\catd(R)$.
If $X\in\catb_M(R)$, then $\Lotimes XF\in\catb_M(R)$ and $\supp_R(X)\subseteq\VE(\fa)$.
The converse of this statement holds when 
at least one of the following conditions is satisfied.
\begin{enumerate}[\rm(1)]
\item\label{prop151123a1}
$F$ is $\fa$-adically finite such that $\supp_R(F)=\VE(\fa)$.
\item\label{prop151123a2}
There is a  homomorphism $\vf\colon R\to S$ of commutative noetherian rings 
with $\fa S\neq S$ such that $F\in\catdb(S)$ is $\fa S$-adically finite over $S$ with $\vf^*(\supp_S(F))\supseteq\VE(\fa)\bigcap\mspec(R)$,
and we have $\Lotimes KX\in\catdf(R)$.
\item\label{prop151123a3}
$F$ is a flat $R$-module with $\supp_R(F)\supseteq\VE(\fa)\bigcap\mspec(R)$, e.g., $F$ is faithfully flat, e.g., free.
\end{enumerate}
\end{thm}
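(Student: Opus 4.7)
My plan is to convert the hypothesis $X\in\catb_M(R)$ via Foxby Equivalence~\ref{thm121116a}\eqref{thm121116a2} into the two statements $\supp_R(X)\subseteq\VE(\fa)$ and $W:=\Rhom MX\in\cata_M(R)$ with $X\simeq\Lotimes MW$. The support conclusion propagates immediately: $\supp_R(\Lotimes XF)\subseteq\supp_R(X)\subseteq\VE(\fa)$ by Fact~\ref{cor130528a2}. The main content is the auxiliary stability statement that $\cata_M(R)$ is closed under $\Lotimes{-}{F}$ whenever $\fd_R(F)<\infty$, which I apply to $W$. Boundedness of $\Lotimes M{(\Lotimes WF)}\simeq\Lotimes{(\Lotimes MW)}{F}$ follows from $\fd_R(F)<\infty$. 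For $\cosupp_R(\Lotimes WF)\subseteq\VE(\fa)$, I chain $\Lotimes WF\simeq\Lotimes{(\LL aW)}{F}\simeq\LL a{\Lotimes WF}$: the first isomorphism uses $W\simeq\LL aW$ (Fact~\ref{cor130528a3}), and the second follows from Lemma~\ref{lem150612b} applied to $\RG aR$, which has finite projective dimension by Fact~\ref{fact130619b}. For the isomorphism $\gamma^M_{\Lotimes WF}$, I will factor through $\Lotimes{\gamma^M_W}{F}$ (an isomorphism since $W\in\cata_M(R)$) and the tensor-evaluation morphism $\omega_{M,\Lotimes MW,F}$; Fact~\ref{fact151222a}\eqref{fact151222a1} shows $\Lotimes K{\omega}$ is an isomorphism, and since both source and target of $\omega$ have cosupport in $\VE(\fa)$, Fact~\ref{thm130318aqqc} upgrades this to a genuine isomorphism. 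Foxby Equivalence~\ref{thm121116a}\eqref{thm121116a1} then returns $\Lotimes XF\simeq\Lotimes M{(\Lotimes WF)}\in\catb_M(R)$.

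\textbf{Converse.} Given $\Lotimes XF\in\catb_M(R)$ and $\supp_R(X)\subseteq\VE(\fa)$, the target $X\in\catb_M(R)$ reduces via Foxby Equivalence~\ref{thm121116a}\eqref{thm121116a2} to $\Rhom MX\in\cata_M(R)$. The strategy under each of conditions~(1)--(3) proceeds in two stages: first, use the faithfulness supplied by $F$ to upgrade boundedness of $\Lotimes XF$ to that of $X$ and, in parallel, boundedness of $\Lotimes{\Rhom MX}{F}\simeq\Rhom M{\Lotimes XF}$ to that of $\Rhom MX$; second, verify $\gamma^M_{\Rhom MX}$ is an isomorphism by combining Fact~\ref{fact151222a}\eqref{fact151222a1} (to show $\Lotimes K{\gamma^M_{\Rhom MX}}$ is an isomorphism) with Fact~\ref{thm130318aqqc} (to lift to a genuine isomorphism, using that the relevant complexes have cosupport in $\VE(\fa)$). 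The tensor-evaluation identification $\Lotimes{\Rhom MX}{F}\simeq\Rhom M{\Lotimes XF}$ is obtained similarly from $\omega_{MXF}$ once boundedness of $X$ is in hand.

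\textbf{The three cases.} Under~(1), set $L:=\Lotimes KF$; by hypothesis $L\in\catdfb(R)$ with $\pd_R(L)<\infty$ and $\supp_R(L)=\VE(\fa)$, so $\Lotimes LX\simeq\Lotimes K{(\Lotimes XF)}$ is bounded, and Lemma~\ref{lem150612b} yields $\Lotimes LX\simeq\Rhom{L^*}{X}$ with $L^*\in\catdfb(R)$. The bound from Fact~\ref{disc151112a}\eqref{disc151112a5} then constrains $\sup(X)$, a parallel tensor-product estimate constrains $\inf(X)$, and Fact~\ref{lem150604a} with the support hypothesis delivers $X\in\catdb(R)$; a Hom-evaluation variant handles $\Rhom MX$. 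Case~(3) is analogous: flatness of $F$ combined with $\supp_R(F)\supseteq\VE(\fa)\cap\mspec(R)$ gives faithfulness for complexes supported on $\VE(\fa)$ by inspection at maximal ideals. Case~(2) is the most delicate and is where I expect the main obstacle: the base change $\vf\colon R\to S$ forces one to extract $R$-side boundedness from $S$-side data, and the assumption $\Lotimes KX\in\catdf(R)$ is exactly what permits completion-and-Nakayama style arguments via $\Comp Ra$ to recover $R$-side conclusions from the $\fa S$-adic finiteness of $F$ over $S$ and the support inclusion $\vf^*(\supp_S(F))\supseteq\VE(\fa)\cap\mspec(R)$. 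Consistently tracking supports across $\vf^*$ and recertifying which tensor-evaluation identifications survive the base change are the technical crux here.
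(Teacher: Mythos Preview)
Your forward direction contains a genuine gap. The ``auxiliary stability statement that $\cata_M(R)$ is closed under $\Lotimes{-}{F}$ whenever $\fd_R(F)<\infty$'' is false. Example~\ref{ex151126a} gives a counterexample: with $R=k[\![Y]\!]$, $\fa=(Y)R$, $E=E_R(k)$, one has $R\in\cata_E(R)$ and $\fd_R(E)<\infty$, yet $\Lotimes RE\simeq E\notin\cata_E(R)$. This is precisely why Theorem~\ref{prop151123b} (the Auslander-class analogue, which the paper proves \emph{after} the Bass-class result and by reducing to it) has $\LL a{\Lotimes XF}$ rather than $\Lotimes XF$. Your attempted justification of $\cosupp_R(\Lotimes WF)\subseteq\VE(\fa)$ via the chain $\Lotimes WF\simeq\Lotimes{(\LL aW)}{F}\simeq\LL a{\Lotimes WF}$ breaks at the second step: Lemma~\ref{lem150612b} is a statement about (co)supports, not an isomorphism of complexes, and in the example above $\Lotimes{(\LL aR)}{E}\simeq E$ while $\LL aE\simeq\shift^1 R$. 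The same gap recurs when you try to upgrade $\Lotimes K\omega$ to an isomorphism of $\omega$ itself: the source $\Lotimes{\Rhom M{\Lotimes MW}}F\simeq\Lotimes WF$ need not have cosupport in $\VE(\fa)$, so Fact~\ref{thm130318aqqc} does not apply.

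The paper avoids this by working directly with the Bass-class morphisms $\xi^M_X$ and $\xi^M_{\Lotimes XF}$, never detouring through $\cata_M(R)$. The key commutative triangle
\[
\xymatrix{
\Lotimes M{\Lotimes{\Rhom MX}F}\ar[rd]^{\Lotimes{\xi^M_X}F}\ar[d]_{\Lotimes M{\omega_{MXF}}}^\simeq
\\
\Lotimes M{\Rhom M{\Lotimes XF}}\ar[r]_-{\xi^M_{\Lotimes XF}}
&\Lotimes XF
}
\]
uses only that $\Lotimes M{\omega_{MXF}}$ is an isomorphism (Fact~\ref{fact151222a}\eqref{fact151222a1} with $V=M$), which requires no cosupport control on the source of $\omega_{MXF}$. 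This immediately gives that $\xi^M_X$ an isomorphism implies $\xi^M_{\Lotimes XF}$ is one, and conversely that $\xi^M_{\Lotimes XF}$ an isomorphism forces $\Lotimes{\xi^M_X}F$ to be one; the faithfulness hypotheses (1)--(3) then lift this to $\xi^M_X$ via~\cite[3.13, 4.1, 5.2]{sather:afbha}, since both ends of $\xi^M_X$ have support in $\VE(\fa)$. Your converse sketch, by contrast, aims to verify $\gamma^M_{\Rhom MX}$ directly, which is workable but adds a layer; the paper's diagram handles both directions at once.
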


\begin{proof}
Let $\Lotimes{\Rhom MX}F\xra{\omega_{MXF}}\Rhom M{\Lotimes XF}$ be the natural tensor-evaluation morphism.
From Fact~\ref{fact151222a}\eqref{fact151222a1}, 
we know that the induced morphisms $\Lotimes K{\omega_{MXF}}$ and $\Lotimes M{\omega_{MXF}}$
are isomorphisms in $\catd(R)$.
Furthermore, we have the following commutative diagram in $\catd(R)$.
\begin{equation}\label{eq151123a}
\begin{split}
\xymatrix{
\Lotimes M{\Lotimes{\Rhom MX}F}\ar[rd]^>>>>>>>>>>{\Lotimes{\xi^M_X}F}\ar[d]_{\Lotimes M{\omega_{MXF}}}^\simeq
\\
\Lotimes M{\Rhom M{\Lotimes XF}}\ar[r]_-{\xi^M_{\Lotimes XF}}
&\Lotimes XF
}
\end{split}
\end{equation}
Note that if $\Lotimes XF\in\catdb(R)$ and $\supp_R(X)\subseteq\VE(\fa)$ 
and at least one of the conditions~\eqref{prop151123a1}--\eqref{prop151123a3} holds,
then $X\in\catdb(R)$, by~\cite[3.13(b), 4.1(c), and~5.2(c)]{sather:afbha}.
Since we have $\catb_M(R)\subseteq\catdb(R)$, we assume without loss of generality that $X\in\catdb(R)$.

Claim 1: If $\Rhom MX\in\catdb(R)$, then $\Rhom M{\Lotimes XF}\in\catdb(R)$.
Assume that  $\Rhom MX\in\catdb(R)$.
Since Fact~\ref{cor130528a2} implies that we have
$$\cosupp_R(\Rhom M{\Lotimes XF})\subseteq\supp_R(M)\subseteq\VE(\fa)$$
it suffices to show that $\Lotimes K{\Rhom M{\Lotimes XF}}\in\catdb(R)$, by Fact~\ref{lem150604a}.
The next isomorphism from the first paragraph of this proof
$$\Lotimes K{\Rhom M{\Lotimes XF}}\xra[\simeq]{\Lotimes K{\omega_{MXF}}}\Lotimes K{(\Lotimes{\Rhom MX}F)}$$
 implies that it suffices for us to show that 
$\Lotimes K{(\Lotimes{\Rhom MX}F)}\in\catdb(R)$.
The conditions $\Rhom MX\in\catdb(R)$ and $\fd_R(K),\fd_R(F)<\infty$ guarantee that
$\Lotimes K{(\Lotimes{\Rhom MX}F)}\in\catdb(R)$,
so the claim is established.

Claim 2: If $\Rhom M{\Lotimes XF}\in \catdb(R)$ 
and any of the conditions~\eqref{prop151123a1}--\eqref{prop151123a3} hold,
then $\Rhom MX\in\catdb(R)$.
Assume that $\Rhom M{\Lotimes XF}\in \catdb(R)$  
and at least one of the conditions~\eqref{prop151123a1}--\eqref{prop151123a3} holds.
As in the proof of Claim~1, this yields $\Lotimes {(\Lotimes K{\Rhom MX})}F\in\catdb(R)$.

We show how each  of the conditions~\eqref{prop151123a1}--\eqref{prop151123a3}  implies that $\Lotimes K{\Rhom MX}\in\catdb(R)$.
In cases~\eqref{prop151123a1} and~\eqref{prop151123a3}, this is 
from~\cite[Theorem~3.13(b) and Proposition~5.2(c)]{sather:afbha}.
In case~\eqref{prop151123a2}, due to the assumption $X\in\catdb(R)$ from the first paragraph of this proof,
we have $\Lotimes K{\Rhom MX}\in\catdf(R)$ by Lemma~\ref{prop151123z}.
Thus, in this case the  boundedness of $\Lotimes K{\Rhom MX}$ is from~\cite[Theorem~4.1(c)]{sather:afbha}.

Now, the condition $\Lotimes K{\Rhom MX}\in\catdb(R)$ implies that $\Rhom MX\in\catdb(R)$ by Facts~\ref{cor130528a2}
and~\ref{lem150604a}.
This establishes Claim 2.

Claim 3: If $X\in\catb_M(R)$, then $\Lotimes XF\in\catb_M(R)$ and $\supp_R(X)\subseteq\VE(\fa)$.
Assume that $X\in\catb_M(R)$.
Foxby Equivalence~\ref{thm121116a}\eqref{thm121116a2} implies that $\supp_R(X)\subseteq\VE(\fa)$, and Claim~1 implies 
that $\Rhom M{\Lotimes XF}\in\catdb(R)$.
Since $X$ is in $\catb_M(R)$, the morphism $\xi^M_X$ is an isomorphism, hence so are $\Lotimes{\xi^M_X}F$ and 
$\xi^M_{\Lotimes XF}$, because of~\eqref{eq151123a}.
Thus, we have $\Lotimes XF\in\catb_M(R)$, and Claim 3 is established.

We complete the proof by assuming that $\Lotimes XF\in\catb_M(R)$ and $\supp_R(M)\subseteq\VE(\fa)$ and at least one of the 
conditions~\eqref{prop151123a1}--\eqref{prop151123a3} holds, and we prove that $X\in\catb_M(R)$.
Our assumptions imply that $\Rhom M{\Lotimes XF}\in\catdfb(R)$, so Claim 2 implies that $\Rhom MX\in\catdb(R)$. 
Furthermore, the morphism $\xi^M_{\Lotimes XF}$ is an isomorphism, hence so is $\Lotimes{\xi^M_X}F$, because of~\eqref{eq151123a}.

We show how each  of the conditions~\eqref{prop151123a1}--\eqref{prop151123a3} implies that 
$\xi^M_X$ is an isomorphism as well.
Note that the domain and co-domain of $\xi^M_X$ have their supports contained in $\VE(\fa)$, 
one by assumption and the other by Fact~\ref{cor130528a2}. 
Thus, following the logic of Fact~\ref{thm130318aqqc}, we conclude from
Fact~\ref{cor130528a2} and~\cite[Proposition~5.2(b)]{sather:afbha}  that $\xi^M_X$ is an isomorphism in 
cases~\eqref{prop151123a1} and~\eqref{prop151123a3}.
In  case~\eqref{prop151123a2}, we use~\cite[Theorem~4.1(b)]{sather:afbha} similarly;
for this, we need to show that the complexes $\Lotimes KX$ and $\Lotimes K{\Lotimes M{\Rhom MX}}$ are in $\catdf(R)$.
The first of these is from assumption~\eqref{prop151123a2}, and the second one is by Lemma~\ref{prop151123z}, since we have
$\Rhom MX\in\catdb(R)$.
\end{proof}

Our next result is a version of Theorem~\ref{prop151123a} for Auslander classes.

\begin{thm}\label{prop151123b}
Let $M$ be an $\fa$-adic semidualizing $R$-complex.
Let $F\in\catdb(R)$ be such that $\fd_R(F)<\infty$, and let $X\in\catd(R)$.
If $X\in\cata_M(R)$, then $\LL a{\Lotimes XF}\in\cata_M(R)$ and $\cosupp_R(X)\subseteq\VE(\fa)$.
The converse of this statement holds when 
at least one of the conditions~\eqref{prop151123a1}--\eqref{prop151123a3} from Theorem~\ref{prop151123a} holds.
\end{thm}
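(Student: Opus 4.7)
The plan is to deduce Theorem~\ref{prop151123b} from its Bass-class counterpart Theorem~\ref{prop151123a} by transferring the problem through Corollary~\ref{cor151221a}\eqref{cor151221a2}, which identifies membership in $\cata_M(R)$ for complexes of the form $\LL aZ$ with membership in $\catb_M(R)$ for $\Lotimes MZ$. The bridge between the two sides is the tensor-associativity isomorphism
$$\Lotimes M{(\Lotimes XF)}\simeq\Lotimes{(\Lotimes MX)}F.$$

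For the forward direction, assume $X\in\cata_M(R)$. Foxby Equivalence~\ref{thm121116a}\eqref{thm121116a3} immediately yields $\cosupp_R(X)\subseteq\VE(\fa)$, and Foxby Equivalence~\ref{thm121116a}\eqref{thm121116a1} gives $\Lotimes MX\in\catb_M(R)$. I then apply Theorem~\ref{prop151123a} to $\Lotimes MX$ to conclude that $\Lotimes{(\Lotimes MX)}F\in\catb_M(R)$. Rewriting via the associativity isomorphism gives $\Lotimes M{(\Lotimes XF)}\in\catb_M(R)$, whence Corollary~\ref{cor151221a}\eqref{cor151221a2} yields $\LL a{\Lotimes XF}\in\cata_M(R)$.

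For the converse, assume $\LL a{\Lotimes XF}\in\cata_M(R)$, $\cosupp_R(X)\subseteq\VE(\fa)$, and one of~(1)--(3). Corollary~\ref{cor151221a}\eqref{cor151221a2} gives $\Lotimes M{(\Lotimes XF)}\in\catb_M(R)$, which by associativity rewrites as $\Lotimes{(\Lotimes MX)}F\in\catb_M(R)$. I then apply the converse direction of Theorem~\ref{prop151123a} with $Y:=\Lotimes MX$ in place of $X$ there, using that $\supp_R(\Lotimes MX)\subseteq\supp_R(M)\subseteq\VE(\fa)$ by Fact~\ref{cor130528a2}. Conditions~(1) and~(3) transfer verbatim, since they are conditions purely on $F$. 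Once $\Lotimes MX\in\catb_M(R)$ is in hand, a second application of Foxby Equivalence~\ref{thm121116a}\eqref{thm121116a3}, combined with the standing assumption $\cosupp_R(X)\subseteq\VE(\fa)$, concludes that $X\in\cata_M(R)$.

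The main obstacle is the transfer of condition~(2), which includes the technical hypothesis $\Lotimes KX\in\catdf(R)$: applying Theorem~\ref{prop151123a} to $Y=\Lotimes MX$ requires instead $\Lotimes K{(\Lotimes MX)}\in\catdf(R)$. This will follow from the isomorphism $\Lotimes K{(\Lotimes MX)}\simeq\Lotimes M{(\Lotimes KX)}$ together with Lemma~\ref{prop151123z}, using that $\Lotimes KM\in\catdfb(R)$ by the $\fa$-adic finiteness of $M$ and that $\Lotimes KX\in\catdf(R)$ by assumption. Because Lemma~\ref{prop151123z} requires its inputs to lie in $\catd_+(R)$, a preliminary boundedness reduction on $X$ is needed; I plan to carry this out in parallel with the opening paragraphs of the proof of Theorem~\ref{prop151123a}, exploiting $\LL a{\Lotimes XF}\in\catdb(R)$, the cosupport hypothesis on $X$, and the specific assumptions in~(1)--(3) to reduce without loss of generality to $X\in\catdb(R)$.
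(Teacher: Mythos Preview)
Your proposal is correct and follows essentially the same strategy as the paper: transfer to the Bass-class result Theorem~\ref{prop151123a} via Foxby Equivalence, using associativity to identify $\Lotimes{(\Lotimes MX)}F$ with $\Lotimes M{(\Lotimes XF)}$. The paper argues directly via the isomorphism $\Lotimes{\Lotimes MX}F\simeq\Lotimes M{\LL a{\Lotimes XF}}$ from Fact~\ref{fact151222c} together with Foxby Equivalence~\ref{thm121116a}\eqref{thm121116a3}, whereas you package this last step through Corollary~\ref{cor151221a}\eqref{cor151221a2}; these are equivalent, since that corollary is proved by exactly that combination.

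One cautionary remark on the boundedness reduction for case~\eqref{prop151123a2}: it is not quite ``parallel'' to the opening of the proof of Theorem~\ref{prop151123a}, because there the reduction uses the hypothesis $\supp_R(X)\subseteq\VE(\fa)$, while here you only have $\cosupp_R(X)\subseteq\VE(\fa)$. The paper instead argues as follows: condition~\eqref{prop151123a2} forces $\supp_R(F)\subseteq\VE(\fa)$, hence $\supp_R(\Lotimes XF)\subseteq\VE(\fa)$; then $\LL a{\Lotimes XF}\in\catdb(R)$ gives $\Lotimes XF\in\catdb(R)$, and a further cited boundedness-detection result yields $X\in\catdb(R)$. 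With that in hand, Lemma~\ref{prop151123z} applies exactly as you say to get $\Lotimes K{(\Lotimes MX)}\in\catdf(R)$. So your plan is right, but the specific citations differ slightly from those in Theorem~\ref{prop151123a}.
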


\begin{proof}
By Fact~\ref{fact151222c} we have
the following isomorphism in $\catd(R)$
\begin{align}\label{eq151222a}
\Lotimes{\Lotimes MX}F
\simeq \Lotimes{M}{\LL a{\Lotimes XF}}.
\end{align}

For the forward implication, assume that $X\in\cata_M(R)$.
Foxby Equivalence~\ref{thm121116a}\eqref{thm121116a3} implies that $\cosupp_R(X)\subseteq\VE(\fa)$ and that $\Lotimes MX\in\catb_M(R)$.
Thus, we have $\Lotimes{M}{\LL a{\Lotimes XF}}\simeq\Lotimes{\Lotimes MX}F\in\catb_M(R)$, by~\eqref{eq151222a} and
Theorem~\ref{prop151123a}.
Fact~\ref{cor130528a2} implies that $\cosupp_R(\LL a{\Lotimes XF})\subseteq\VE(\fa)$,
so another application of Foxby Equivalence~\ref{thm121116a}\eqref{thm121116a3} implies that $\LL a{\Lotimes XF}\in\cata_M(R)$.

The converse is handled similarly, as follows. 
Assume that $\LL a{\Lotimes XF}\in\cata_M(R)$ and $\cosupp_R(X)\subseteq\VE(\fa)$.
Assume also that at least one of the  conditions~\eqref{prop151123a1}--\eqref{prop151123a3} from Theorem~\ref{prop151123a} is satisfied.

Assume for this paragraph that condition~\eqref{prop151123a2} from Theorem~\ref{prop151123a} is satisfied.
Then we have $\supp_R(F)\subseteq\VE(\fa)$ by~\cite[Lemma~5.3]{sather:afcc}, so Fact~\ref{cor130528a2}
implies that $\supp_R(\Lotimes XF)\subseteq\supp_R(F)\subseteq\VE(\fa)$.
The fact that we have $\LL a{\Lotimes XF}\in\catdb(R)$
implies that $\Lotimes XF\in\catdb(R)$ by~\cite[Corollary~3.10(b)]{sather:afbha};
so $X\in\catdb(R)$ by~\cite[Theorem~3.13(b)]{sather:afbha}.
From Lemma~\ref{prop151123z}, it follows that $\Lotimes K{\Lotimes MX}\in\catdf(R)$.

Foxby Equivalence~\ref{thm121116a}\eqref{thm121116a3} and the isomorphism~\eqref{eq151222a} imply that
$\Lotimes{\Lotimes MX}F\simeq\Lotimes{M}{\LL a{\Lotimes XF}}\in\catb_M(R)$.
Since $\supp_R(\Lotimes MX)\subseteq\VE(\fa)$, Theorem~\ref{prop151123a} implies that
$\Lotimes MX\in\catb_M(R)$; in case~\eqref{prop151123a2}, this  uses the condition
$\Lotimes K{\Lotimes MX}\in\catdf(R)$ from the previous paragraph.
Another application of Foxby Equivalence~\ref{thm121116a}\eqref{thm121116a3} implies that $X\in\cata_M(R)$.
\end{proof}

The presence of $\LLno a$ in the previous result may be a bit unsettling.
However, it is a necessary consequence of the co-support condition in Foxby Equivalence~\ref{thm121116a}\eqref{thm121116a3};
we see in the next example that it is unavoidable in general, even over a very nice ring. 
It can be gotten around, though, in the special case $F\in\catdfb(R)$, as we show in the subsequent corollary. 

\begin{ex}\label{ex151126a}
Let $k$ be a field, and set $R:=k[\![Y]\!]$ with $E:=E_R(k)$ and $\fa:=(Y)R$.
Since $R$ is Gorenstein and local, we have $\fd_R(E)<\infty$. 
Also, we have $R\in\cata_E(R)$ by Proposition~\ref{prop130612a}, but $\Lotimes ER\simeq E\notin\cata_E(R)$ by
Example~\ref{ex130530a}.
\end{ex}

\begin{cor}\label{prop151123c}
Let $M$ be an $\fa$-adic semidualizing $R$-complex.
Let $F\in\catdfb(R)$  be such that $\fd_R(F)<\infty$, and let $X\in\catd(R)$.
If $X\in\cata_M(R)$, then $\Lotimes XF\in\cata_M(R)$ and $\cosupp_R(X)\subseteq\VE(\fa)$.
The converse of this statement holds when 
at least one of the following conditions is satisfied.
\begin{enumerate}[\rm(1)]
\item\label{prop151123c1}
$\supp_R(F)=\VE(\fa)$.
\item\label{prop151123c2}
$\supp_R(F)\supseteq\VE(\fa)\bigcap\mspec(R)$
and $\Lotimes KX\in\catdf(R)$.
\item\label{prop151123c3}
$F$ is a flat $R$-module with $\supp_R(F)\supseteq\VE(\fa)\bigcap\mspec(R)$, e.g., $F$ is free.
\end{enumerate}
\end{cor}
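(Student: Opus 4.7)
The plan is to derive Corollary~\ref{prop151123c} from Theorem~\ref{prop151123b}, using the additional hypothesis $F\in\catdfb(R)$ to strip off the $\LLno a$ that appears on $\Lotimes XF$ in that theorem. The bridge is the observation that $F\in\catdfb(R)$ with $\fd_R(F)<\infty$ has finite projective dimension, so Lemma~\ref{lem150612b} applies and gives
\begin{equation*}
\cosupp_R(\Lotimes XF)=\supp_R(F)\bigcap\cosupp_R(X)\subseteq\cosupp_R(X).
\end{equation*}
Hence whenever $\cosupp_R(X)\subseteq\VE(\fa)$, Fact~\ref{cor130528a3} guarantees that the natural morphism $\Lotimes XF\to\LL a{\Lotimes XF}$ is an isomorphism in $\catd(R)$.

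For the forward direction, Theorem~\ref{prop151123b} supplies both $\cosupp_R(X)\subseteq\VE(\fa)$ and $\LL a{\Lotimes XF}\in\cata_M(R)$ from the hypothesis $X\in\cata_M(R)$, and the bridge immediately yields $\Lotimes XF\simeq\LL a{\Lotimes XF}\in\cata_M(R)$. For the converse, begin with $\Lotimes XF\in\cata_M(R)$ and $\cosupp_R(X)\subseteq\VE(\fa)$; Foxby Equivalence~\ref{thm121116a}\eqref{thm121116a3} forces $\cosupp_R(\Lotimes XF)\subseteq\VE(\fa)$, and the bridge then gives $\LL a{\Lotimes XF}\simeq\Lotimes XF\in\cata_M(R)$. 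Applying Theorem~\ref{prop151123b} concludes $X\in\cata_M(R)$, provided one of conditions \eqref{prop151123a1}--\eqref{prop151123a3} of Theorem~\ref{prop151123a} holds. Corollary~\eqref{prop151123c1} supplies Theorem~\ref{prop151123a}\eqref{prop151123a1} because $F\in\catdfb(R)$ with $\supp_R(F)=\VE(\fa)$ is automatically $\fa$-adically finite by Example~\ref{ex160206a}\eqref{ex160206a1}, and Corollary~\eqref{prop151123c3} is verbatim Theorem~\ref{prop151123a}\eqref{prop151123a3}.

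The main obstacle will be verifying that Corollary~\eqref{prop151123c2} implies Theorem~\ref{prop151123a}\eqref{prop151123a2}. The natural choice $\vf=\id_R$ and $S=R$ does not immediately work, because $F\in\catdfb(R)$ with $\supp_R(F)\supseteq\VE(\fa)\cap\mspec(R)$ need not satisfy $\supp_R(F)\subseteq\VE(\fa)$, so $F$ is not in general $\fa$-adically finite over $R$. To handle this I would revisit the proof of Theorem~\ref{prop151123b}: the $\fa$-adic finiteness of $F$ enters there only to secure certain $\catdf(R)$-boundedness statements, and in the present setting those same statements follow from $F\in\catdfb(R)$ and $\Lotimes KX\in\catdf(R)$ via Lemma~\ref{prop151123z}. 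Thus the argument of Theorem~\ref{prop151123b} carries over intact to Corollary~\eqref{prop151123c2}.
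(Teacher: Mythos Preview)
Your approach is essentially identical to the paper's: reduce to Theorem~\ref{prop151123b} via the observation that $F\in\catdfb(R)$ with $\fd_R(F)<\infty$ has finite projective dimension, so Lemma~\ref{lem150612b} gives $\cosupp_R(\Lotimes XF)\subseteq\cosupp_R(X)$, whence Fact~\ref{cor130528a3} yields $\Lotimes XF\simeq\LL a{\Lotimes XF}$ once $\cosupp_R(X)\subseteq\VE(\fa)$. The paper's proof is terser---it assumes $\cosupp_R(X)\subseteq\VE(\fa)$ without loss of generality at the outset and then simply says ``the desired conclusions follow from Theorem~\ref{prop151123b}''---but the content is the same.

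Your extra scrutiny of condition~\eqref{prop151123c2} is a point the paper glosses over entirely. You are right that Corollary condition~\eqref{prop151123c2} does not literally imply Theorem~\ref{prop151123a}\eqref{prop151123a2} for $S=R$, since $F\in\catdfb(R)$ with $\supp_R(F)\supseteq\VE(\fa)\cap\mspec(R)$ need not have $\supp_R(F)\subseteq\VE(\fa)$ and hence need not be $\fa$-adically finite. The paper does not comment on this; your proposal to revisit the proof of Theorem~\ref{prop151123b} and observe that the $\fa S$-adic finiteness of $F$ is invoked only through the cited results from~\cite{sather:afbha}, which have analogues for $F\in\catdfb(R)$, is the natural way to close this. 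So your proof is at least as complete as the paper's on this point.
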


\begin{proof}
If $X\in\cata_M(R)$, then $\cosupp_R(X)\subseteq\VE(\fa)$ by Foxby Equivalence~\ref{thm121116a}\eqref{thm121116a3}.
So we assume without loss of generality that $\cosupp_R(X)\subseteq\VE(\fa)$.
Because of this, our assumptions on $F$ imply that $\cosupp_R(\Lotimes XF)\subseteq\cosupp_R(X)\subseteq\VE(\fa)$, 
by Lemma~\ref{lem150612b}.
Fact~\ref{cor130528a3} implies that 
$\Lotimes XF\simeq\LL a{\Lotimes XF}$,
so the desired conclusions follow from Theorem~\ref{prop151123b}.
\end{proof}

\begin{disc}\label{cor151124a}
Let $M$ be an $\fa$-adic semidualizing $R$-complex, and let $X\in\catd(R)$.

The Koszul complex $K$ satisfies condition~\eqref{prop151123a1} of Theorem~\ref{prop151123a} and Corollary~\ref{prop151123c}.
Thus, we have $X\in\catb_M(R)$ if and only if $\Lotimes KX\in\catb_M(R)$ and $\supp_R(X)\subseteq\VE(\fa)$;
and we have $X\in\cata_M(R)$ if and only if $\Lotimes KX\in\cata_M(R)$ and $\cosupp_R(X)\subseteq\VE(\fa)$.

One can similarly use Theorem~\ref{prop151123a} to conclude that
we have $X\in\catb_M(R)$ if and only if $\RG aX\in\catb_M(R)$ and $\supp_R(X)\subseteq\VE(\fa)$;
however this true for more trivial reasons. 
Indeed, If $X\in\catb_M(R)$, then Foxby Equivalence~\ref{thm121116a}\eqref{thm121116a2} implies $\supp_R(X)\subseteq\VE(\fa)$,
so we have $\RG aX\simeq X\in\catb_M(R)$ by Fact~\ref{cor130528a3}.
Conversely, if $\RG aX\in\catb_M(R)$ and $\supp_R(X)\subseteq\VE(\fa)$, then
Fact~\ref{cor130528a3} implies that $X\simeq\RG aX\in\catb_M(R)$.
Similarly, we have $X\in\cata_M(R)$ if and only if $\LL aX\in\cata_M(R)$ and $\cosupp_R(X)\subseteq\VE(\fa)$.
\end{disc}

It is not clear that the converse statements of this section have  been documented in the 
case of semidualizing complexes (that is, the case $\fa=0$). We write this out explicitly for
Theorem~\ref{prop151123a} and leave the remaining cases for the interested reader.

\begin{cor}\label{prop151123az}
Let $C$ be a semidualizing $R$-complex.
Let $F\in\catdb(R)$ be such that $\fd_R(F)<\infty$, and let $X\in\catd(R)$.
If $X\in\catb_C(R)$, then $\Lotimes XF\in\catb_C(R)$.
The converse of this statement holds when 
at least one of the following conditions holds.
\begin{enumerate}[\rm(1)]
\item\label{prop151123az1}
$F\in\catdfb(R)$ satisfies $\supp_R(F)=\spec(R)$.
\item\label{prop151123az2}
There is a  homomorphism $\vf\colon R\to S$ of commutative noetherian rings such that $F\in\catdfb(S)$ 
satisfies $\vf^*(\supp_S(F))\supseteq\mspec(R)$,
and $\Lotimes KX\in\catdf(R)$.
\item\label{prop151123az3}
$F$ is a faithfully flat $R$-module.
\end{enumerate}
\end{cor}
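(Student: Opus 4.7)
The plan is simply to deduce this corollary as the special case $\fa=0$ of Theorem~\ref{prop151123a}. By Example~\ref{prop130528b}, a semidualizing $R$-complex $C$ is exactly a $0$-adic semidualizing $R$-complex. Setting $\fa=0$ we have $\VE(\fa)=\spec(R)$, so every support/co-support condition of the form ``$\subseteq \VE(\fa)$'' is automatically satisfied, and one may take the generating sequence for $\fa$ to be empty; with this choice $K=R$ and $\Lotimes KX\simeq X$.

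For the forward implication, I would apply Theorem~\ref{prop151123a}: assuming $X\in\catb_C(R)$, it yields $\Lotimes XF\in\catb_C(R)$, together with the (now vacuous) conclusion $\supp_R(X)\subseteq\spec(R)$.

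For the converse, the task is just to verify that conditions \eqref{prop151123az1}--\eqref{prop151123az3} here match conditions \eqref{prop151123a1}--\eqref{prop151123a3} of Theorem~\ref{prop151123a} after the substitution $\fa=0$. Condition \eqref{prop151123az1}, namely $F\in\catdfb(R)$ with $\supp_R(F)=\spec(R)$, is exactly $\fa$-adic finiteness of $F$ together with $\supp_R(F)=\VE(\fa)$ when $\fa=0$ (any complex in $\catdfb(R)$ is $0$-adically finite). Condition \eqref{prop151123az2} provides $\vf\colon R\to S$ with $F\in\catdfb(S)$, which is $0\cdot S$-adic finiteness over $S$, and the containment $\vf^*(\supp_S(F))\supseteq\mspec(R)$ is precisely $\vf^*(\supp_S(F))\supseteq\VE(0)\cap\mspec(R)$; moreover $0\cdot S=0\neq S$, and the displayed hypothesis $\Lotimes KX\in\catdf(R)$ is stated in the same form as in Theorem~\ref{prop151123a}\eqref{prop151123a2}. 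Condition \eqref{prop151123az3}, that $F$ is faithfully flat, gives a flat $R$-module with $\supp_R(F)=\spec(R)\supseteq\mspec(R)=\VE(0)\cap\mspec(R)$, which is Theorem~\ref{prop151123a}\eqref{prop151123a3}. In each case, the hypothesis $\Lotimes XF\in\catb_C(R)$ together with the automatic $\supp_R(X)\subseteq\spec(R)$ lets us invoke the converse in Theorem~\ref{prop151123a} to conclude $X\in\catb_C(R)$.

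There is no real obstacle here beyond bookkeeping; the only subtlety worth flagging is condition \eqref{prop151123az2}, where one must confirm that with $\fa=0$ and empty generating sequence the hypothesis $\Lotimes KX\in\catdf(R)$ of the parent theorem reduces to the identically phrased hypothesis of the corollary (since $K=R$ makes $\Lotimes KX\simeq X\in\catdf(R)$), and that the requirement $\fa S\neq S$ in Theorem~\ref{prop151123a}\eqref{prop151123a2} is trivially met for $\fa=0$ and any nonzero commutative noetherian $S$.
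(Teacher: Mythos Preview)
Your proposal is correct and matches the paper's approach exactly: the paper states this corollary immediately after remarking that it is simply Theorem~\ref{prop151123a} specialized to the case $\fa=0$, without giving any further proof. Your bookkeeping---including the checks that $\catdfb(R)$ coincides with $0$-adic finiteness, that $\VE(0)=\spec(R)$ trivializes the support conditions, and that $\fa S\neq S$ holds automatically---is precisely the verification the paper leaves to the reader.
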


\subsection*{Finite Projective Dimension}
We now turn our attention to stability with respect to $\Rhom P-$ where $P$ is an $R$-complex of finite projective dimension.

\begin{thm}\label{prop151124a}
Let $M$ be an $\fa$-adic semidualizing $R$-complex.
Let $P\in\catdb(R)$ be such that $\pd_R(P)<\infty$, and let $X\in\catd(R)$.
If $X\in\cata_M(R)$, then $\Rhom PX\in\cata_M(R)$ and $\cosupp_R(X)\subseteq\VE(\fa)$.
The converse of this statement holds when 
at least one of the following conditions is satisfied.
\begin{enumerate}[\rm(1)]
\item\label{prop151124a1}
$P$ is $\fa$-adically finite such that $\supp_R(P)=\VE(\fa)$.
\item\label{prop151124a2}
$P$ is $\fa$-adically finite with $\supp_R(P)\supseteq\VE(\fa)\bigcap\mspec(R)$
and $\Lotimes KX\in\catdf(R)$.
\item\label{prop151124a3}
$P$ is a projective $R$-module with $\supp_R(P)\supseteq\VE(\fa)\bigcap\mspec(R)$, e.g., $P$ is faithfully projective, e.g., free.
\end{enumerate}
\end{thm}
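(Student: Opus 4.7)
The plan is to mirror the three-claim structure of the proof of Theorem~\ref{prop151123a}, translated to the Hom/Auslander setting. The key auxiliary morphism is the tensor-evaluation
$$\omega_{PXM}\colon \Lotimes{\Rhom PX}M\longrightarrow\Rhom P{\Lotimes XM}$$
from Fact~\ref{fact151222a}\eqref{fact151222a1}, with $P$ in the role of the $\fa$-adically finite complex (available under conditions~\eqref{prop151124a1} or~\eqref{prop151124a2}, and an isomorphism directly under~\eqref{prop151124a3} by flatness of $P$). After applying $\Lotimes K{-}$ or $\Lotimes M{-}$, this morphism becomes an isomorphism. Combining with the commutativity of $\otimes^{\mathbf{L}}$ and Hom-$\otimes$ adjointness, one obtains the commutative triangle
$$\xymatrix@R=3mm{
& \Rhom M{\Lotimes M{\Rhom PX}}\ar[dd]^-{\simeq}\\
\Rhom PX \ar[ru]^-{\gamma^M_{\Rhom PX}}\ar[rd]_-{\Rhom P{\gamma^M_X}} & \\
& \Rhom P{\Rhom M{\Lotimes MX}},
}$$
analogous to diagram~\eqref{eq151123a}, which links the isomorphism status of $\gamma^M_{\Rhom PX}$ to that of $\Rhom P{\gamma^M_X}$.

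The forward implication is handled as in Claim~3 of the proof of Theorem~\ref{prop151123a}: given $X\in\cata_M(R)$, Foxby Equivalence~\ref{thm121116a}\eqref{thm121116a3} immediately gives $\cosupp_R(X)\subseteq\VE(\fa)$; boundedness of $\Lotimes M{\Rhom PX}$ follows from the companion of Claim~1, reducing via $\Lotimes K{-}$ and the cosupport reduction of Fact~\ref{lem150604a} to the boundedness of $\Lotimes K{\Rhom P{\Lotimes MX}}$, which is immediate from $\Lotimes MX\in\catdb(R)$, $\pd_R(P)<\infty$, and $\fd_R(K)<\infty$; and since $\gamma^M_X$ is an isomorphism by hypothesis, so is $\Rhom P{\gamma^M_X}$, whence the triangle promotes this to $\gamma^M_{\Rhom PX}$ being an isomorphism.

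For the converse, assume $\Rhom PX\in\cata_M(R)$, $\cosupp_R(X)\subseteq\VE(\fa)$, and one of conditions~\eqref{prop151124a1}--\eqref{prop151124a3}. The triangle forces $\Rhom P{\gamma^M_X}$ to be an isomorphism. Both the domain $X$ and the codomain $\Rhom M{\Lotimes MX}$ of $\gamma^M_X$ have cosupport contained in $\VE(\fa)$ (the latter by Fact~\ref{cor130528a2}), so the Fact~\ref{thm130318aqqc}-style logic combined with the Hom-cancellation results of~\cite[Sections~3--5]{sather:afbha}---the Hom-companions of~\cite[Theorem~3.13(b), Theorem~4.1(b),(c), and Proposition~5.2(b),(c)]{sather:afbha} cited in the proof of Theorem~\ref{prop151123a}---let us cancel $\Rhom P{-}$ and conclude that $\gamma^M_X$ is an isomorphism, while simultaneously transferring boundedness from $\Lotimes M{\Rhom PX}$ back to $\Lotimes MX$. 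In case~\eqref{prop151124a2}, the extra hypothesis $\Lotimes KX\in\catdf(R)$ combined with Lemma~\ref{prop151123z} supplies the finite-generation input required by~\cite[Theorem~4.1]{sather:afbha}. The main obstacle is matching each of conditions~\eqref{prop151124a1}--\eqref{prop151124a3} against the precise Hom-cancellation statement from~\cite{sather:afbha} and verifying that the cosupport and boundedness hypotheses line up in each case.
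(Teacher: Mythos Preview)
Your overall architecture matches the paper's: reduce to $X\in\catdb(R)$, compare $\gamma^M_{\Rhom PX}$ with $\Rhom P{\gamma^M_X}$ via a commutative square whose other edges come from tensor-evaluation and swap, and then for the converse use the $\Rhom P-$ cancellation results of~\cite{sather:afbha}. For the converse under conditions~\eqref{prop151124a1}--\eqref{prop151124a3} this is fine and essentially what the paper does.

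The gap is in the forward implication. You set up the tensor-evaluation $\omega_{PXM}$ by invoking Fact~\ref{fact151222a}\eqref{fact151222a1} \emph{with $P$ in the role of the $\fa$-adically finite complex}, noting this is ``available under conditions~\eqref{prop151124a1} or~\eqref{prop151124a2}, and an isomorphism directly under~\eqref{prop151124a3}.'' But the forward implication must hold for \emph{every} $P\in\catdb(R)$ with $\pd_R(P)<\infty$, with no extra hypothesis on $P$; in that generality $P$ need not be $\fa$-adically finite nor a projective module, so your justification for the vertical isomorphism in the triangle collapses. Without that isomorphism you cannot pass from $\Rhom P{\gamma^M_X}$ being an isomorphism to $\gamma^M_{\Rhom PX}$ being one, and the same problem blocks your ``companion of Claim~1'' boundedness argument, since the reduction $\Lotimes K{(\Lotimes M{\Rhom PX})}\simeq\Lotimes K{\Rhom P{\Lotimes MX}}$ is precisely $\Lotimes K{\omega_{PXM}}$.

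The paper avoids this by placing $M$, not $P$, in the $\fa$-adically finite slot: it invokes~\cite[Theorem~4.3(b)]{sather:afcc} (a companion to Fact~\ref{fact151222a} for the finite-$\pd$ situation, not stated in the background here) to get that $\omega_{MPX}\colon\Lotimes M{\Rhom PX}\to\Rhom P{\Lotimes MX}$ is an isomorphism outright whenever $M$ is $\fa$-adically finite and $\pd_R(P)<\infty$; and for the boundedness step it uses the classical tensor-evaluation isomorphism $\Lotimes{(\Lotimes KM)}{\Rhom PX}\simeq\Rhom P{\Lotimes{(\Lotimes KM)}X}$ from~\cite[Proposition~2.2(vi)]{christensen:apac}. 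Both of these are available for arbitrary $P$ of finite projective dimension, which is what makes the forward implication go through uniformly.
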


\begin{proof}
The proof of this result is very similar to that of Theorem~\ref{prop151123a}, so we only sketch it, highlighting the differences.
If $X\in\cata_M(R)$, then Foxby Equivalence~\ref{thm121116a}\eqref{thm121116a3} implies that $\cosupp_R(X)\subseteq\VE(\fa)$.
Thus, we assume throughout that $\cosupp_R(X)\subseteq\VE(\fa)$.  
From this, if $\Rhom PX\in\catdb(R)$ 
and at least one of the conditions~\eqref{prop151123a1}--\eqref{prop151123a3} holds,
then we have $X\in\catdb(R)$, by~\cite[3.11(b), 4.5(c), and~5.1(e)]{sather:afbha}.
Thus, we assume throughout that $X\in\catdb(R)$.  

The following isomorphism is tensor-evaluation~\cite[Proposition~2.2(vi)]{christensen:apac}. 
$$\Lotimes {\Lotimes KM}{\Rhom PX}\simeq\Rhom P{\Lotimes K{\Lotimes MX}}$$
Thus, if $\Lotimes MX\in\catdb(R)$, then $\Lotimes {\Lotimes KM}{\Rhom PX}\in\catdb(R)$;
and by Fact~\ref{lem150604a}, the condition
$\supp_R(\Lotimes M{\Rhom PX})\subseteq\supp_R(M)\subseteq\VE(\fa)$ implies that we have
$\Lotimes M{\Rhom PX}\in\catdb(R)$. 
Conversely, if $\Lotimes M{\Rhom PX}\in\catdb(R)$ and at least one of the  conditions~\eqref{prop151124a1}--\eqref{prop151124a3} 
is satisfied,
then the complex $\Rhom P{\Lotimes K{\Lotimes MX}}$ is in $\catdb(R)$, 
and it follows from Lemma~\ref{prop151123z} 
and~\cite[3.9(b), 4.3(c), and 5.1(e)]{sather:afbha} that $\Lotimes K{\Lotimes MX}\in\catdb(R)$;
from this, we have $\Lotimes MX\in\catdb(R)$ by Fact~\ref{lem150604a}.
Thus, we assume throughout that $\Lotimes MX\in\catdb(R)$.  

Next, we consider the following commutative diagram in $\catd(R)$
$$\xymatrix{
\Rhom PX\ar[r]^-{\gamma^M_{\Rhom PX}}\ar[d]_{\Rhom P{\gamma^M_X}}
&\Rhom M{\Lotimes M{\Rhom PX}}\ar[d]^{\Rhom M{\omega_{MPX}}}_\simeq \\
\Rhom P{\Rhom M{\Lotimes MX}} \ar[r]^-\simeq
&\Rhom M{\Rhom P{\Lotimes MX}}
}$$
wherein $\omega_{MPX}$ is tensor-evaluation~\cite[Theorem~4.3(b)]{sather:afcc} and the unspecified isomorphism is ``swap'', i.e., 
a composition of adjointness isomorphisms. 
Thus, if $\gamma^M_X$ is an isomorphism, then so is $\gamma^M_{\Rhom PX}$.
Conversely, assume that $\gamma^M_{\Rhom PX}$ is an isomorphism and at least one of the  
conditions~\eqref{prop151124a1}--\eqref{prop151124a3} holds.
It follows that $\Rhom P{\gamma^M_X}$ is an isomorphism.
Since the source and target of $\gamma^M_X$ both have their co-supports contained in $\VE(\fa)$,
Fact~\ref{cor130528a2} and~\cite[Proposition~5.1(d)]{sather:afbha} show that $\gamma^M_X$ is an isomorphism in 
cases~\eqref{prop151124a1} and~\eqref{prop151124a3}, respectively; see Fact~\ref{thm130318aqqc}.
In  case~\eqref{prop151124a2}, we use~\cite[Theorem~4.3(b)]{sather:afbha} similarly;
for this, we need to show that $\Lotimes KX,\Lotimes K{\Rhom M{\Lotimes MX}}\in\catdf(R)$.
The first of these is from assumption~\eqref{prop151123a2}, and the second one is by Lemma~\ref{prop151123z}, as we have
$\Lotimes MX\in\catdb(R)$.
\end{proof}

\begin{thm}\label{prop151124b}
Let $M$ be an $\fa$-adic semidualizing $R$-complex.
Let $X\in\catd(R)$ and  $P\in\catdb(R)$ be such that $\pd_R(P)<\infty$.
If $X\in\catb_M(R)$, then $\RG a{\Rhom PX}\in\catb_M(R)$ and $\supp_R(X)\subseteq\VE(\fa)$.
The converse of this statement holds when 
at least one of the conditions~\eqref{prop151124a1}--\eqref{prop151124a3} from Theorem~\ref{prop151124a} holds.
\end{thm}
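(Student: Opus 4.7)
The plan is to model the argument on that of Theorem~\ref{prop151123b}, using Theorem~\ref{prop151124a} in place of Theorem~\ref{prop151123a} and invoking Corollary~\ref{cor151221a}\eqref{cor151221a1} to shuttle between the Auslander and Bass sides. The key observation is the ``swap'' adjointness isomorphism
\begin{equation*}
\Rhom{M}{\Rhom PX}\simeq\Rhom P{\Rhom MX},
\end{equation*}
which requires no support hypotheses, together with Corollary~\ref{cor151221a}\eqref{cor151221a1}, which tells us that $\Rhom M{\Rhom PX}\in\cata_M(R)$ if and only if $\RG a{\Rhom PX}\in\catb_M(R)$.

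For the forward implication, assume $X\in\catb_M(R)$. Foxby Equivalence~\ref{thm121116a}\eqref{thm121116a2} yields $\supp_R(X)\subseteq\VE(\fa)$ and $\Rhom MX\in\cata_M(R)$. Theorem~\ref{prop151124a} then delivers $\Rhom P{\Rhom MX}\in\cata_M(R)$, and the swap adjointness rewrites this as $\Rhom M{\Rhom PX}\in\cata_M(R)$, whence Corollary~\ref{cor151221a}\eqref{cor151221a1} gives $\RG a{\Rhom PX}\in\catb_M(R)$. For the converse, assume $\RG a{\Rhom PX}\in\catb_M(R)$, $\supp_R(X)\subseteq\VE(\fa)$, and that one of the conditions~\eqref{prop151124a1}--\eqref{prop151124a3} from Theorem~\ref{prop151124a} holds. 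Running the chain backwards, Corollary~\ref{cor151221a}\eqref{cor151221a1} yields $\Rhom M{\Rhom PX}\in\cata_M(R)$, the adjointness identifies this with $\Rhom P{\Rhom MX}\in\cata_M(R)$, and the converse direction of Theorem~\ref{prop151124a} applied to $\Rhom MX$ then produces $\Rhom MX\in\cata_M(R)$; combining this with $\supp_R(X)\subseteq\VE(\fa)$ via Foxby Equivalence~\ref{thm121116a}\eqref{thm121116a2} concludes $X\in\catb_M(R)$.

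The main technical obstacle will be discharging the hypotheses of the converse direction of Theorem~\ref{prop151124a} when it is applied to $\Rhom MX$ in place of $X$: case~\eqref{prop151124a2} demands $\Lotimes K{\Rhom MX}\in\catdf(R)$, which should be extracted from the assumption $\Lotimes KX\in\catdf(R)$ via Lemma~\ref{prop151123z} once suitable boundedness of $X$ and $\Rhom MX$ is secured from $\RG a{\Rhom PX}\in\catdb(R)$ together with the support hypothesis $\supp_R(X)\subseteq\VE(\fa)$, in the same spirit as the opening paragraphs of the proofs of Theorems~\ref{prop151123a} and~\ref{prop151123b}.
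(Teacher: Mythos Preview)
Your proposal is correct and follows essentially the same route as the paper. The only cosmetic difference is that you invoke Corollary~\ref{cor151221a}\eqref{cor151221a1} to pass between $\Rhom M{\Rhom PX}\in\cata_M(R)$ and $\RG a{\Rhom PX}\in\catb_M(R)$, whereas the paper unpacks this step inline via the isomorphism $\Rhom M{\RG a{\Rhom PX}}\simeq\Rhom M{\Rhom PX}$ from Fact~\ref{fact151222c} together with Foxby Equivalence~\ref{thm121116a}\eqref{thm121116a2}; the swap identification and the appeal to Theorem~\ref{prop151124a} are identical. Your diagnosis of the technical obstacle in case~\eqref{prop151124a2} is also on target: the paper secures $X\in\catdb(R)$ exactly as you suggest (first $\Rhom PX\in\catdb(R)$ from $\RG a{\Rhom PX}\in\catdb(R)$ using $\cosupp_R(\Rhom PX)\subseteq\supp_R(P)\subseteq\VE(\fa)$, then $X\in\catdb(R)$ via the adic-finiteness of $P$), and then feeds this into Lemma~\ref{prop151123z} to obtain $\Lotimes K{\Rhom MX}\in\catdf(R)$. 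One small point you leave implicit but should state when writing it out: applying the converse of Theorem~\ref{prop151124a} to $\Rhom MX$ requires $\cosupp_R(\Rhom MX)\subseteq\VE(\fa)$, which is immediate from Fact~\ref{cor130528a}.\ref{cor130528a2} since $\supp_R(M)\subseteq\VE(\fa)$.
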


\begin{proof}
Again, we sketch the proof. 
By Foxby Equivalence~\ref{thm121116a}\eqref{thm121116a2}, we assume without loss of generality that $\supp_R(X)\subseteq\VE(\fa)$.
From Fact~\ref{fact151222c}, we have the first isomorphism in the next sequence
\begin{align*}
\Rhom M{\RG a{\Rhom PX}}
&\simeq\Rhom M{\Rhom PX}\\
&\simeq\Rhom P{\Rhom MX}.
\end{align*}
The second isomorphism is swap.

Now, for the forward implication, assume that  $X\in\catb_M(R)$.
Then we have 
$\Rhom MX\in\cata_M(R)$, by Foxby Equivalence~\ref{thm121116a}\eqref{thm121116a2}. 
Theorem~\ref{prop151124a} implies that $\Rhom P{\Rhom MX}\in\cata_M(X)$.
From the above isomorphisms, it follows that we have $\Rhom M{\RG a{\Rhom PX}}\in\cata_M(X)$.
Fact~\ref{cor130528a2} implies that we also have $\supp_R(\RG a{\Rhom PX})\subseteq\VE(\fa)$,
so we conclude that $\RG a{\Rhom PX}\in\catb_M(R)$ by Foxby Equivalence~\ref{thm121116a}\eqref{thm121116a2}.
This completes the proof of the forward implication.

Assume for this paragraph that $\RG a{\Rhom PX}\in\catdb(R)$ and
condition~\eqref{prop151124a2} from Theorem~\ref{prop151124a} is satisfied.
Fact~\ref{cor130528a2} implies that
$$\cosupp_R(\Rhom PX)\subseteq\supp_R(P)\subseteq\VE(\fa)$$
so we have  $\Rhom PX\in\catdb(R)$ by~\cite[Corollary~3.15(b)]{sather:afbha};
we conclude that $X\in\catdb(R)$ by~\cite[Theorem~4.5(c)]{sather:afbha}.
In particular, we have $\Lotimes K{\Rhom MX}\in\catdf(R)$ by Lemma~\ref{prop151123z}.

Now, for the converse, assume that $\RG a{\Rhom PX}\in\catb_M(R)$ and at least one of the 
conditions~\eqref{prop151124a1}--\eqref{prop151124a3} from Theorem~\ref{prop151124a} holds.
Foxby Equivalence~\ref{thm121116a}\eqref{thm121116a2} and the isomorphisms above imply that
$$\Rhom P{\Rhom MX}\simeq\Rhom M{\RG a{\Rhom PX}}\in\cata_M(X).$$
Since $\cosupp_R(\Rhom MX)\subseteq\VE(\fa)$ by Fact~\ref{cor130528a2}, we have
$\Rhom MX\in\cata_M(R)$ by Theorem~\ref{prop151124a}; 
in case~\eqref{prop151123a2}, this uses the condition $\Lotimes K{\Rhom MX}\in\catdf(R)$
from the previous paragraph.
A final application of Foxby Equivalence~\ref{thm121116a}\eqref{thm121116a2} implies that $X\in\catb_M(R)$.
\end{proof}

The next example shows that one cannot drop the $\RGno a$ from Theorem~\ref{prop151124b},
even when $P$ is free.
See, however, Corollary~\ref{prop151124c} for the special case $P\in\catdfb(R)$.

\begin{ex}\label{ex151126b}
Let $k$ be a field, and set $R:=k[\![Y]\!]$ with $E:=E_R(k)$.
Then we have $\Rhom{R^{(\bbn)}}E\simeq E^{\bbn}\notin\catb_E(R)$ and $E\in\catb_E(R)$, by Example~\ref{ex151125a}.
\end{ex}

\begin{cor}\label{prop151124c}
Let $M$ be an $\fa$-adic semidualizing $R$-complex.
Let $P\in\catdfb(R)$  be such that $\pd_R(P)<\infty$, and let $X\in\catd(R)$.
If $X\in\catb_M(R)$, then one has $\Rhom PX\in\catb_M(R)$ and $\supp_R(X)\subseteq\VE(\fa)$.
The converse of this statement holds when 
at least one of the following conditions is satisfied.
\begin{enumerate}[\rm(1)]
\item\label{prop151124c1}
$\supp_R(P)=\VE(\fa)$.
\item\label{prop151124c2}
$\supp_R(P)\supseteq\VE(\fa)\bigcap\mspec(R)$,
and $\Lotimes KX\in\catdf(R)$.
\item\label{prop151124c3}
$P$ is a projective $R$-module with $\supp_R(P)\supseteq\VE(\fa)\bigcap\mspec(R)$, e.g., $P$ is free.
\end{enumerate}
\end{cor}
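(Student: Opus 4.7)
The plan is to reduce Corollary~\ref{prop151124c} to Theorem~\ref{prop151124b}, following the template of how Corollary~\ref{prop151123c} is deduced from Theorem~\ref{prop151123b}. The crucial simplification is that the finiteness hypothesis $P\in\catdfb(R)$, combined with $\supp_R(X)\subseteq\VE(\fa)$, forces $\Rhom PX$ to already have support inside $\VE(\fa)$, so the derived local cohomology $\RGno a$ appearing in Theorem~\ref{prop151124b} becomes redundant.

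First I would reduce to the case $\supp_R(X)\subseteq\VE(\fa)$: in the converse direction this is an assumption, and in the forward direction it comes for free from Foxby Equivalence~\ref{thm121116a}\eqref{thm121116a2}. Under this hypothesis, Lemma~\ref{lem150612b} gives
\[
\supp_R(\Rhom PX)=\supp_R(P)\cap\supp_R(X)\subseteq\supp_R(X)\subseteq\VE(\fa),
\]
so Fact~\ref{cor130528a3} supplies the natural isomorphism $\RG a{\Rhom PX}\simeq\Rhom PX$. Consequently $\Rhom PX\in\catb_M(R)$ if and only if $\RG a{\Rhom PX}\in\catb_M(R)$, and with this identification the forward implication of the corollary is immediate from the forward implication of Theorem~\ref{prop151124b}.

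For the converse, I would translate each of the three conditions of the corollary into the corresponding condition of Theorem~\ref{prop151124a} and then invoke the converse of Theorem~\ref{prop151124b}. In case (1), the hypotheses $P\in\catdfb(R)$ and $\supp_R(P)=\VE(\fa)$ make $P$ an $\fa$-adically finite complex by Example~\ref{ex160206a}\eqref{ex160206a1}, matching condition (1) of the theorem; case (2) of the corollary carries its extra hypothesis $\Lotimes KX\in\catdf(R)$ directly into condition (2) of the theorem; and case (3) of the corollary is a specialization of condition (3) of the theorem, since projective modules are flat. The main obstacle here is not conceptual but bookkeeping: verifying that condition (1) of the corollary passes to condition (1) of Theorem~\ref{prop151124a} via Example~\ref{ex160206a}\eqref{ex160206a1}, and that the support containment $\supp_R(P)\supseteq\VE(\fa)\cap\mspec(R)$ in cases (2) and (3) is exactly what Theorem~\ref{prop151124a} demands. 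Once these translations are in place, no substantive new argument is required beyond Theorem~\ref{prop151124b}.
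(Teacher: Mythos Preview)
Your proposal is correct and follows essentially the same route as the paper: reduce to $\supp_R(X)\subseteq\VE(\fa)$, apply Lemma~\ref{lem150612b} to obtain $\supp_R(\Rhom PX)\subseteq\VE(\fa)$, use Fact~\ref{cor130528a3} to identify $\Rhom PX\simeq\RG a{\Rhom PX}$, and then invoke Theorem~\ref{prop151124b}. Your discussion of how conditions (1)--(3) of the corollary feed into those of Theorem~\ref{prop151124a} is more explicit than the paper's own proof, which simply asserts that ``the desired conclusions follow from Theorem~\ref{prop151124b}.''
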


\begin{proof}
Again, assume without loss of generality that $\supp_R(X)\subseteq\VE(\fa)$.
Because of this, our assumptions on $P$ imply that $\supp_R(\Rhom PX)\subseteq\supp_R(X)\subseteq\VE(\fa)$, 
by Lemma~\ref{lem150612b}.
Fact~\ref{cor130528a3} implies that $\Rhom PX\simeq\RG a{\Rhom PX}$.
Thus, the desired conclusions follow from Theorem~\ref{prop151124b}.
\end{proof}

\subsection*{Finite Injective Dimension}

The next three results are verified like earlier ones.\footnote{In 
Corollary~\ref{prop151124ac}, use~\cite[Proposition~3.16]{sather:scc} to conclude that $\supp_R(\Rhom XI)\subseteq\VE(\fa)$.}
For perspective in these results, we recall the following.

\begin{disc}\label{disc151222a}
Let $J$ be an injective $R$-module, with Matlis decomposition $J\cong\bigoplus_{\p\in\spec(R)}E_R(R/\p)^{(\mu_\p)}$.
Then one has
\begin{align*}
\supp_R(J)&=\{\p\in\spec(R)\mid\mu_\p\neq\emptyset\} \\
\cosupp_R(J)&=\{\q\in\spec(R)\mid\text{there is a $\p\in\spec(R)$ such that $\q\subseteq\p$ and $\mu_\p\neq\emptyset$}\}
\end{align*}
by~\cite[Propositions~3.8 and~6.3]{sather:scc}. 
From this, one verifies readily that 
\begin{enumerate}[\rm(a)]
\item $\supp_R(J)\subseteq\cosupp_R(J)$, and
\item $\VE(\fa)\subseteq\cosupp_R(J)$ if and only if $\supp_R(J)\supseteq\VE(\fa)\bigcap\mspec(R)$.
\end{enumerate}
\end{disc}

\begin{thm}\label{prop151124aa}
Let $M$ be an $\fa$-adic semidualizing $R$-complex.
Let $I\in\catdb(R)$ be such that $\id_R(I)<\infty$, and let $X\in\catd(R)$.
If $X\in\catb_M(R)$, then $\Rhom XI\in\cata_M(R)$ and $\supp_R(X)\subseteq\VE(\fa)$.
The converse of this statement holds when 
$I$ is an injective $R$-module with $\cosupp_R(I)\supseteq\VE(\fa)$, e.g., $I$ is faithfully injective.
\end{thm}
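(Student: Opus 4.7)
The plan is to parallel the strategy of Theorems~\ref{prop151124a} and~\ref{prop151124b}, replacing tensor-evaluation with the Hom-evaluation morphism $\theta_{MXI}\colon\Lotimes{M}{\Rhom{X}{I}}\to\Rhom{\Rhom{M}{X}}{I}$. The central tool is the commutative diagram
\begin{equation*}
\xymatrix@C=15mm{
\Rhom{X}{I}\ar[r]^-{\gamma^M_{\Rhom{X}{I}}}\ar[d]_{\Rhom{\xi^M_X}{I}}
&\Rhom{M}{\Lotimes{M}{\Rhom{X}{I}}}\ar[d]^{\Rhom{M}{\theta_{MXI}}} \\
\Rhom{\Lotimes{M}{\Rhom{M}{X}}}{I}\ar[r]^-\simeq
&\Rhom{M}{\Rhom{\Rhom{M}{X}}{I}}
}
\end{equation*}
whose bottom arrow is Hom-tensor adjointness. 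By Fact~\ref{fact151222a}\eqref{fact151222a2} applied with $V=M$ and with $V=K$ (using $\supp_R(M),\supp_R(K)\subseteq\VE(\fa)$ together with $\id_R(I)<\infty$), both $\Rhom{M}{\theta_{MXI}}$ and $\Lotimes{K}{\theta_{MXI}}$ are isomorphisms in $\catd(R)$.

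For the forward direction, suppose $X\in\catb_M(R)$. Then $\supp_R(X)\subseteq\VE(\fa)$ by Foxby Equivalence~\ref{thm121116a}\eqref{thm121116a2}, and $\xi^M_X$ is an isomorphism, hence so is $\Rhom{\xi^M_X}{I}$. The complex $\Rhom{X}{I}$ is bounded since $X\in\catdb(R)$ and $\id_R(I)<\infty$, and $\Lotimes{M}{\Rhom{X}{I}}$ is bounded since $\Lotimes{K}{(\Lotimes{M}{\Rhom{X}{I}})}\simeq\Lotimes{K}{\Rhom{\Rhom{M}{X}}{I}}\in\catdb(R)$ while $\supp_R(\Lotimes{M}{\Rhom{X}{I}})\subseteq\VE(\fa)$, by Fact~\ref{lem150604a}. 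The diagram then forces $\gamma^M_{\Rhom{X}{I}}$ to be an isomorphism, so $\Rhom{X}{I}\in\cata_M(R)$.

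For the converse, assume $\Rhom{X}{I}\in\cata_M(R)$, $\supp_R(X)\subseteq\VE(\fa)$, and $I$ is an injective $R$-module with $\cosupp_R(I)\supseteq\VE(\fa)$. The key faithfulness observation is that for any $Y\in\catd(R)$ with $\supp_R(Y)\subseteq\VE(\fa)$, Fact~\ref{cor130528a2} gives $\cosupp_R(\Rhom{Y}{I})=\supp_R(Y)\cap\cosupp_R(I)=\supp_R(Y)$, so by Fact~\ref{cor130528a1}, $\Rhom{Y}{I}\simeq 0$ if and only if $Y\simeq 0$; in the style of Fact~\ref{thm130318aqqc}, a morphism $f$ between complexes whose sources and targets have supports in $\VE(\fa)$ is then an isomorphism if and only if $\Rhom{f}{I}$ is one. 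Appealing to analogous boundedness results from~\cite{sather:afbha} (parallel to those used in the proofs of Theorems~\ref{prop151123a} and~\ref{prop151124a}), one deduces that $X\in\catdb(R)$ and $\Rhom{M}{X}\in\catdb(R)$. The diagram, now with $\gamma^M_{\Rhom{X}{I}}$ an isomorphism, then yields that $\Rhom{\xi^M_X}{I}$ is an isomorphism. Since $\Lotimes{M}{\Rhom{M}{X}}$ and $X$ both have supports in $\VE(\fa)\subseteq\cosupp_R(I)$, the faithfulness observation forces $\xi^M_X$ itself to be an isomorphism, and hence $X\in\catb_M(R)$.

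The main obstacle will be the boundedness step in the converse: deducing $X\in\catdb(R)$ and $\Rhom{M}{X}\in\catdb(R)$ from the hypothesis that $\Rhom{X}{I}\in\cata_M(R)$ requires careful use of the cosupport condition on $I$ in concert with prior boundedness results from~\cite{sather:afbha}, much as the earlier theorems in this section needed additional structural hypotheses on $F$ or $P$ to reverse tensor- and Hom-boundedness.
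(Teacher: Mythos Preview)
Your proposal is correct and follows essentially the same approach that the paper intends: the paper explicitly states that this result is ``verified like earlier ones,'' meaning Theorems~\ref{prop151123a} and~\ref{prop151124a}, and your diagram with Hom-evaluation $\theta_{MXI}$ (via Fact~\ref{fact151222a}\eqref{fact151222a2}) together with the faithfulness observation for the converse is exactly the right adaptation. The boundedness step you flag as the main obstacle is indeed handled by the injective-module analogues in~\cite{sather:afbha} of the results cited in those earlier proofs, and your faithfulness argument (that $\Rhom{-}{I}$ detects vanishing on complexes with support in $\VE(\fa)$ when $\cosupp_R(I)\supseteq\VE(\fa)$) is precisely the mechanism that makes those results work.
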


\begin{thm}\label{prop151124ab}
Let $M$ be an $\fa$-adic semidualizing $R$-complex.
Let $X\in\catd(R)$, and let $I\in\catdb(R)$ be such that $\id_R(I)<\infty$.
If $X\in\cata_M(R)$, then $\RG a{\Rhom XI}\in\catb_M(R)$ and $\cosupp_R(X)\subseteq\VE(\fa)$.
The converse of this statement holds when
$I$ is an injective $R$-module with $\cosupp_R(I)\supseteq\VE(\fa)$, e.g., $I$ is faithfully injective.
\end{thm}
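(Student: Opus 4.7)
The plan is to mirror the strategy already used for Theorem~\ref{prop151124b}, leveraging Foxby Equivalence~\ref{thm121116a} together with the duality between $\cata_M(R)$ and $\catb_M(R)$ supplied by $\Rhom{-}{I}$ that is documented in Theorem~\ref{prop151124aa}. The central computation is the chain
\[
\Rhom{M}{\RG a{\Rhom XI}} \simeq \Rhom{M}{\Rhom XI} \simeq \Rhom{\Lotimes MX}{I},
\]
where the first step uses Fact~\ref{fact151222c} (which applies since $\supp_R(M)\subseteq\VE(\fa)$) and the second is adjointness. These isomorphisms convert the Foxby-class membership of $\RG a{\Rhom XI}$ into a statement about $\Rhom{\Lotimes MX}{I}$, which is the setting of Theorem~\ref{prop151124aa}.

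For the forward direction, I start with $X\in\cata_M(R)$. Foxby Equivalence~\ref{thm121116a}\eqref{thm121116a3} gives $\cosupp_R(X)\subseteq\VE(\fa)$ and $\Lotimes MX\in\catb_M(R)$. Applying the forward implication of Theorem~\ref{prop151124aa} to $\Lotimes MX$ yields $\Rhom{\Lotimes MX}{I}\in\cata_M(R)$, so via the chain of isomorphisms above we obtain $\Rhom M{\RG a{\Rhom XI}}\in\cata_M(R)$. Since $\supp_R(\RG a{\Rhom XI})\subseteq\VE(\fa)$ by Fact~\ref{cor130528a2}, Foxby Equivalence~\ref{thm121116a}\eqref{thm121116a2} then delivers $\RG a{\Rhom XI}\in\catb_M(R)$, as required.

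For the converse, assume $\RG a{\Rhom XI}\in\catb_M(R)$ and $\cosupp_R(X)\subseteq\VE(\fa)$, with $I$ an injective $R$-module satisfying $\cosupp_R(I)\supseteq\VE(\fa)$. Foxby Equivalence~\ref{thm121116a}\eqref{thm121116a2} gives $\Rhom M{\RG a{\Rhom XI}}\in\cata_M(R)$, so the same isomorphisms yield $\Rhom{\Lotimes MX}{I}\in\cata_M(R)$. Since $\supp_R(\Lotimes MX)\subseteq\supp_R(M)\subseteq\VE(\fa)$ by Fact~\ref{cor130528a2}, the converse direction of Theorem~\ref{prop151124aa} (which needs precisely the injectivity and cosupport hypothesis on $I$) provides $\Lotimes MX\in\catb_M(R)$. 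Combined with $\cosupp_R(X)\subseteq\VE(\fa)$, one last application of Foxby Equivalence~\ref{thm121116a}\eqref{thm121116a3} concludes $X\in\cata_M(R)$. The main obstacle is simply bookkeeping the support/cosupport conditions so that Foxby Equivalence can be invoked without \emph{a priori} boundedness; everything else is a formal consequence of Theorem~\ref{prop151124aa} and the adjunction/torsion identity above.
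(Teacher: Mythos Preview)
Your proposal is correct and follows precisely the template the paper intends: it explicitly says this result is ``verified like earlier ones,'' and your argument mirrors the proof of Theorem~\ref{prop151124b} exactly, with Theorem~\ref{prop151124aa} playing the role that Theorem~\ref{prop151124a} played there and the adjointness isomorphism $\Rhom{M}{\Rhom XI}\simeq\Rhom{\Lotimes MX}{I}$ replacing swap. The bookkeeping of support/cosupport conditions and the use of Fact~\ref{fact151222c} are all handled correctly.
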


The next example shows that one cannot avoid the $\RGno a$ in Theorem~\ref{prop151124ab}, even in very nice situations. 

\begin{ex}\label{ex151126c}
Let $k$ be a field, and set $R:=k[\![Y]\!]$ with $E:=E_R(k)$
and  $I:=E_R(R)$.  Remark~\ref{disc151222a} shows that we have
$\supp_R(I)=\{0\}\not\subseteq\VE(\fa)$.
Then Foxby Equivalence~\ref{thm121116a}\eqref{thm121116a2} implies that
$\Rhom RI\simeq I\notin\catb_E(R)$,
even though by Proposition~\ref{prop130612a} implies that we have $R\in\cata_E(R)$.
\end{ex}

\begin{cor}\label{prop151124ac}
Let $M$ be an $\fa$-adic semidualizing $R$-complex.
Let $X\in\catdf(R)$ and $I\in\catdb(R)$ be such that $\id_R(I)<\infty$ and $\supp_R(I)\subseteq\VE(\fa)$.
If $X\in\cata_M(R)$, then $\Rhom XI\in\catb_M(R)$ and $\cosupp_R(X)\subseteq\VE(\fa)$.
The converse of this statement holds when 
$I$ is an injective $R$-module with $\cosupp_R(I)\supseteq\VE(\fa)$.
\end{cor}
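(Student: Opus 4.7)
The strategy is to reduce both directions of the corollary to Theorem~\ref{prop151124ab} by showing that under our hypotheses the natural morphism $\RG a{\Rhom XI} \to \Rhom XI$ is an isomorphism, so the $\RGno a$ appearing in Theorem~\ref{prop151124ab} disappears. This is the ``\emph{supp}-trick'' that the footnote points to, and it mirrors how Corollary~\ref{prop151124c} was extracted from Theorem~\ref{prop151124b}.

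The key preliminary step is the support computation
$$\supp_R(\Rhom XI)\subseteq \supp_R(X)\cap\supp_R(I)\subseteq\supp_R(I)\subseteq\VE(\fa),$$
where the first containment follows from~\cite[Proposition~3.16]{sather:scc} (applicable because $X\in\catdf(R)$ and $\id_R(I)<\infty$) and the last is our standing hypothesis on $I$. By Fact~\ref{cor130528a3}, this yields a natural isomorphism $\RG a{\Rhom XI}\simeq\Rhom XI$ in $\catd(R)$.

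With this isomorphism in hand, the two implications are essentially free. For the forward implication, assume $X\in\cata_M(R)$; Theorem~\ref{prop151124ab} provides $\RG a{\Rhom XI}\in\catb_M(R)$ together with $\cosupp_R(X)\subseteq\VE(\fa)$, and the displayed isomorphism transports this to $\Rhom XI\in\catb_M(R)$. For the converse, assume $I$ is an injective $R$-module with $\cosupp_R(I)\supseteq\VE(\fa)$ and that $\Rhom XI\in\catb_M(R)$ with $\cosupp_R(X)\subseteq\VE(\fa)$; the isomorphism promotes the hypothesis to $\RG a{\Rhom XI}\in\catb_M(R)$, and the converse half of Theorem~\ref{prop151124ab} delivers $X\in\cata_M(R)$. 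I do not anticipate any serious obstacle: once the support containment is pinned down via~\cite[Proposition~3.16]{sather:scc}, the argument is a short bookkeeping exercise applying Fact~\ref{cor130528a3} and Theorem~\ref{prop151124ab}.
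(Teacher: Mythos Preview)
Your proposal is correct and follows precisely the approach indicated by the paper: use \cite[Proposition~3.16]{sather:scc} to obtain $\supp_R(\Rhom XI)\subseteq\VE(\fa)$, invoke Fact~\ref{cor130528a3} to identify $\Rhom XI\simeq\RG a{\Rhom XI}$, and then appeal to Theorem~\ref{prop151124ab} for both implications, exactly as Corollary~\ref{prop151124c} was derived from Theorem~\ref{prop151124b}.
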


\begin{disc}\label{disc151126a}
One has to be a bit careful with the converse  in Corollary~\ref{prop151124ac} to make sure that
one satisfies both assumptions $\supp_R(I)\subseteq\VE(\fa)\subseteq\cosupp_R(I)$. For instance, this 
will fail in general when $I$ is faithfully injective,
by Remark~\ref{disc151222a}.
\end{disc}

\begin{disc}\label{disc151213a}
One can use the results in this section in a variety of combinations. 
For instance, combining Theorems~\ref{prop151123a} and~\ref{prop151124aa}, one obtains the following.

Let $M$ be an $\fa$-adic semidualizing $R$-complex.
Let $F\in\catdb(R)$ be such that $\fd_R(F)<\infty$, and let $X\in\catd(R)$.
Let $I$ be an injective $R$-module with $\cosupp_R(I)\supseteq\VE(\fa)$, e.g., $I$ is faithfully injective,
and set $J=\Rhom FI$. 
If $X\in\catb_M(R)$, then $\Rhom XI\in\cata_M(R)$ and $\supp_R(X)\subseteq\VE(\fa)$.
The converse holds if 
at least one of the  conditions~\eqref{prop151123a1}--\eqref{prop151123a3} from Theorem~\ref{prop151123a} holds.

Indeed, one has $\id_R(J)<\infty$, so the forward implication
follows directly from Theorem~\ref{prop151124aa}
Also, by definition and Hom-tensor adjointness, we have
\begin{align*}
\Rhom XJ
&\simeq\Rhom X{\Rhom FI}
\simeq\Rhom{\Lotimes XF}I
\end{align*}
so the converse follows by applying first Theorem~\ref{prop151124aa}
and then Theorem~\ref{prop151123a}.
We leave other variations on this theme to the interested reader.
\end{disc}

\section{Base Change}\label{sec150527a}

This  section focuses on some transfer properties for  Foxby classes.
It contains Theorem~\ref{prop151127ar} from the introduction. 

\begin{notation}\label{notn160110a}
In this section, let $\vf\colon R\to S$ be a  homomorphism of commutative noetherian rings with
$\fa S\neq S$, and let $Q\colon \catd(S)\to\catd(R)$ be the forgetful functor.
\end{notation}

If $C$ is a semidualizing $R$-complex, then~\cite[Theorem 5.1]{christensen:scatac} says that
$S\in\catac(R)$ if and only if $\Lotimes SC$ is a semidualizing $S$-complex.
Proposition~\ref{prop140218a} shows that things are not so simple for adic semidualizing complexes.
Specifically, assume that $R$ is not $\fa$-adically complete, and let $M$ be an $\fa$-adic semidualizing $R$-complex.
Then $M\simeq\Lotimes RM$ is $\fa$-adically semidualizing over $R$, but $R\notin\cata_M(R)$ by
Proposition~\ref{prop140218a}.
As one might expect, the missing ingredient involves co-support, embodied in the completeness condition in our next result.

\begin{thm}\label{prop150525a}
Let $M$ be  $\fa$-adic semidualizing over $R$.
Then  $S\in\cata_M(R)$ if and only if
$\Lotimes SM$ is $\fa S$-adically semidualizing over $S$ and $S$ is $\fa S$-adically complete.
\end{thm}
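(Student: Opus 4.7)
The plan is to invoke Foxby Equivalence (Theorem~\ref{thm121116a}\eqref{thm121116ax3}) to trade the condition $S \in \cata_M(R)$ for the conjunction of $\Lotimes MS\in\catb_M(R)$ and $\cosupp_R(S)\subseteq\VE(\fa)$, and then unpack each half into the two assertions of the theorem. The co-support condition translates into the $\fa S$-adic completeness of $S$ via the standard derived-completion theory in the noetherian setting, and the Bass class condition combines with the tensor-Hom adjointness $\Rhom[S]{\Lotimes MS}{-}\simeq\Rhom{M}{-}$ to deliver the adic semidualizing property of $\Lotimes MS$ over $S$.

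For the forward implication, assume $S\in\cata_M(R)$. Foxby Equivalence yields $\cosupp_R(S)\subseteq\VE(\fa)$, so by Fact~\ref{cor130528a3} the natural morphism $S\to\LL{a}{S}$ is an isomorphism in $\catd(R)$; since for noetherian $R$ the derived $\fa$-completion of a module agrees in degree $0$ with the ordinary $\fa$-adic completion (via the natural map), we conclude that $S\to\widehat{S}^{\fa S}$ is an isomorphism, so $S$ is $\fa S$-adically complete. To see that $N:=\Lotimes MS$ is $\fa S$-adically semidualizing over $S$, verify the ingredients of Definition~\ref{def120925e}: $N\in\catdb(S)$ is inherited from $S\in\cata_M(R)$; the inclusion $\supp_S(N)\subseteq\VE(\fa S)$ follows from $\supp_R(N)\subseteq\supp_R(M)\subseteq\VE(\fa)$ combined with $\vf^*(\supp_S(N))\subseteq\supp_R(N)$; the Koszul-finiteness $\Lotimes[S]{K^S(\vf(\x))}{N}\in\catdfb(S)$ is obtained by base-changing a bounded finitely generated projective resolution of $\Lotimes{K}{M}$ along $\vf$, using $K^S(\vf(\x))\simeq\Lotimes{K}{S}$; and the homothety is an isomorphism via the commutative diagram
\[
\xymatrix@C=12mm{
S \ar@{=}[d] \ar[rr]^-{\gamma^M_S}_-{\simeq} && \Rhom{M}{N} \\
\widehat{S}^{\fa S} \ar[rr]^-{\chi^{\widehat{S}^{\fa S}}_{N}} && \Rhom[S]{N}{N} \ar[u]_-{\simeq}
}
\]
whose left vertical is the completeness just established, whose top is the defining isomorphism of $S\in\cata_M(R)$, and whose right vertical is tensor-Hom adjointness.

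For the converse, assume $N=\Lotimes MS$ is $\fa S$-adically semidualizing over $S$ and that $S$ is $\fa S$-adically complete. The same diagram, with the bottom and right arrows now known to be isomorphisms, forces the top arrow $\gamma^M_S$ to be an isomorphism; since $N\in\catdb(R)$ via the forgetful functor, this gives $S\in\cata_M(R)$. The main technical point is to justify commutativity of the square above, which requires tracking both $\chi^{\widehat{S}^{\fa S}}_{N}$ (built as in Remark~\ref{rmk140129a} through a semi-injective resolution of $N$ over $\widehat{S}^{\fa S}$) and $\gamma^M_S$ down to the common assignment ``$s\mapsto$ multiplication by $s$ on $N$'' in $\Rhom[S]{N}{N}$; a secondary subtlety is the identification $\HH_0(\LL{a}{S})\cong\widehat{S}^{\fa S}$ via the natural map, which uses the weak proregularity of $\fa$ in the noetherian setting.
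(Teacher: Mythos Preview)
Your argument is correct and follows essentially the same route as the paper's own proof: both hinge on the commutative diagram comparing $\gamma^M_S$ with the homothety $\chi^{\widehat S^{\fa S}}_{\Lotimes SM}$ through Hom-tensor adjointness, and both extract the completeness of $S$ from the co-support condition $\cosupp_R(S)\subseteq\VE(\fa)$ supplied by Foxby Equivalence. The only cosmetic differences are that the paper first passes to $\cosupp_S(S)\subseteq\VE(\fa S)$ via a change-of-rings lemma before invoking derived completion over $S$, whereas you read off $S\cong\widehat S^{\fa S}$ directly from $\HH_0(\LL aS)$ over $R$; and the paper quotes a standing result for the $\fa S$-adic finiteness of $\Lotimes SM$, while you verify it by hand via base change of the Koszul complex. (A small note: your opening sentence says you will trade $S\in\cata_M(R)$ for $\Lotimes MS\in\catb_M(R)$, but in the body you never actually use the Bass-class membership---you work directly with $\gamma^M_S$ and the boundedness of $\Lotimes MS$, which is exactly what the paper does.)
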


\begin{proof}
For this paragraph, assume that $S$ is $\fa S$-adically complete and $\Lotimes SM\in\catdb(S)$. 
Since $M$ is $\fa$-adically semidualizing, we have $\supp_R(M)\subseteq\VE(\fa)$, and hence
$\supp_S(\Lotimes SM)\subseteq\VE(\fa S)$ by~\cite[Lemma~5.7]{sather:afcc}.
Thus, the morphism $\chi^S_{\Lotimes SM}=\chi^{\comp S^{\fa S}}_{\Lotimes SM}$ is defined, and
we have the following commutative diagram in $\catd(S)$:
\begin{equation}\label{eq150525a}
\begin{split}
\xymatrix{S\ar[rd]^{\chi^S_{\Lotimes SM}}\ar[d]_-{\gamma^M_S}
\\
\Rhom{M}{\Lotimes SM}\ar[r]_-\simeq
&\Rhom[S]{\Lotimes SM}{\Lotimes SM}.}
\end{split}
\end{equation}
The unspecified isomorphism is from Hom-tensor adjointness.

Now, for the forward implication, assume that $S\in\cata_M(R)$.
Foxby Equivalence~\ref{thm121116a}\eqref{thm121116a3} implies that
$\cosupp_R(S)\subseteq\VE(\fa)$, so we conclude that $\cosupp_S(S)\subseteq\VE(\fa S)$ by~\cite[Lemma~5.4]{sather:afcc}.
It follows from Fact~\ref{cor130528a3} that the natural morphism $S\to \mathbf{L}\Lambda^{\fa S}(S)$ is an isomorphism in $\catd(S)$,
i.e., $S$ is $\fa S$-adically complete.
Furthermore, the condition $S\in\cata_M(R)$ implies by definition that $\Lotimes SM\in\catdb(R)$ and $\gamma^M_S$ is an isomorphism.
In particular, we are in the situation of the first paragraph of this proof, and
the diagram~\eqref{eq150525a} implies that $\chi^S_{\Lotimes SM}$ is an isomorphism.
Since $M$ is $\fa$-adically finite over $R$, \cite[Theorem~5.10]{sather:afcc} implies that $\Lotimes SM$ is $\fa S$-adically finite over $S$,
so $\Lotimes SM$ is $\fa S$-adically semidualizing over $S$, as desired.

For the converse, assume that 
$\Lotimes SM$ is $\fa S$-adically semidualizing over $S$ and $S$ is $\fa S$-adically complete.
In particular, we are in the situation of the first paragraph of this proof,
and the morphism $\chi^S_{\Lotimes SM}$ is an isomorphism in $\catd(S)$.
The diagram~\eqref{eq150525a} implies that $\gamma^M_S$ is an isomorphism,
so $S\in\cata_M(R)$, as desired.
\end{proof}

Theorem~\ref{prop150525a} gives some perspective on the $\fa S$-adically semidualizing condition for $\Lotimes SM$
in the next few results, as does~\cite[Theorem~5.6]{sather:asc}, 
which says that $\Lotimes SM$ is $\fa S$-adically semidualizing over $S$ whenever $\fd_R(S)<\infty$.

\begin{prop}\label{prop140218b}
Let $M$ be an $\fa$-adic semidualizing $R$-complex.
Assume that $\Lotimes SM$ is $\fa S$-adically semidualizing over $S$, and let $Y$ be an $S$-complex. 
\begin{enumerate}[\rm(a)]
\item \label{prop140218b1}
One has $Y\in\cata_{\Lotimes SM}(S)$ if and only if $Q(Y)\in\cata_{M}(R)$.
\item \label{prop140218b2}
One has $Y\in\catb_{\Lotimes SM}(S)$ if and only if $Q(Y)\in\catb_{M}(R)$.
\end{enumerate}
\end{prop}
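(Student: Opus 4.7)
The plan is to reduce both statements to the observation that the forgetful functor $Q\colon\catd(S)\to\catd(R)$ preserves homology exactly, hence reflects both boundedness and isomorphisms, and that the defining morphisms of the two Foxby classes correspond under $Q$ via standard adjunction isomorphisms.

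First, I would establish the following natural isomorphisms in $\catd(R)$, valid for any $Y\in\catd(S)$:
$$Q(\Lotimes[S]{(\Lotimes SM)}{Y})\simeq\Lotimes M{Q(Y)},\qquad Q(\Rhom[S]{\Lotimes SM}{Y})\simeq\Rhom M{Q(Y)}.$$
The first comes from transitivity of tensor products ($\Lotimes[S]{(\Lotimes M{S})}{Y}\simeq\Lotimes M{(\Lotimes[S]{S}{Y})}\simeq\Lotimes M{Q(Y)}$), and the second is the standard Hom-tensor adjointness $\Rhom[S]{\Lotimes M{S}}{Y}\simeq\Rhom M{\Rhom[S]{S}{Y}}\simeq\Rhom M{Q(Y)}$.

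Second, I would observe that for any $Z\in\catd(S)$ one has $\HH_i(Z)\cong\HH_i(Q(Z))$ as abelian groups, so $Z\in\catdb(S)$ if and only if $Q(Z)\in\catdb(R)$; moreover a morphism $f$ in $\catd(S)$ is an isomorphism if and only if $Q(f)$ is an isomorphism in $\catd(R)$, since in both cases this is detected by the cone being acyclic. This shows that the two conditions defining $Y\in\cata_{\Lotimes SM}(S)$ (namely $\Lotimes[S]{(\Lotimes SM)}Y\in\catdb(S)$ and $\gamma^{\Lotimes SM}_Y$ an isomorphism) transfer, via the isomorphisms of the previous paragraph, to the two conditions defining $Q(Y)\in\cata_M(R)$, and symmetrically for (b).

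Third, I would verify the naturality squares asserting that under the isomorphisms above, $Q(\gamma^{\Lotimes SM}_Y)$ is identified with $\gamma^M_{Q(Y)}$ (for part (a)) and $Q(\xi^{\Lotimes SM}_Y)$ is identified with $\xi^M_{Q(Y)}$ (for part (b)). This is a unit/counit compatibility check for the two adjunctions $(\Lotimes[R]M{-},\Rhom[R]{M}{-})$ and $(\Lotimes[S]{(\Lotimes SM)}{-},\Rhom[S]{\Lotimes SM}{-})$ through the restriction-of-scalars functor $Q$.

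Combining these three ingredients yields both biconditionals. The hypothesis that $\Lotimes SM$ is $\fa S$-adically semidualizing over $S$ is used only to ensure that $\Lotimes SM\in\catdb(S)$ (so that $\cata_{\Lotimes SM}(S)$ and $\catb_{\Lotimes SM}(S)$ are defined in the sense of Definition~\ref{defn121104a}). The main obstacle, though still routine, is the bookkeeping in the third step, namely the naturality squares identifying the $R$-linear units/counits with the $S$-linear ones after forgetting scalars; everything else reduces to standard derived-category adjunctions and the fact that $Q$ detects isomorphisms and boundedness.
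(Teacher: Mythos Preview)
Your proposal is correct and is precisely the argument the paper has in mind: the paper's own proof is the single line ``Argue as in the proof of~\cite[Proposition 5.3]{christensen:scatac},'' and what you have written is exactly Christensen's argument spelled out---the adjunction/associativity isomorphisms identifying $\Lotimes[S]{(\Lotimes SM)}{Y}$ and $\Rhom[S]{\Lotimes SM}{Y}$ with their $R$-counterparts via $Q$, together with the fact that $Q$ detects boundedness and isomorphisms on homology. One very minor remark: in addition to giving $\Lotimes SM\in\catdb(S)$, the semidualizing hypothesis on $\Lotimes SM$ is what makes the classes $\cata_{\Lotimes SM}(S)$ and $\catb_{\Lotimes SM}(S)$ meaningful in the paper's context (triangulated, containing the expected objects, etc.), but you are right that the formal equivalence in Definition~\ref{defn121104a} needs only $\Lotimes SM\in\catdb(S)$.
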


\begin{proof}
Argue as in the proof of~\cite[Proposition 5.3]{christensen:scatac}.
\end{proof}

\begin{disc}\label{disc151213b}
For perspective in some of our subsequent results, note that~\cite[Proposition~5.6(a)]{sather:afcc} implies that $\supp_R(S)=\vf^*(\spec(S))$.

Also, we have $\Lotimes KS\in\catdf(R)$ if and only if the induced map $R/\fa\to S/\fa S$
is module-finite. (For instance, this is satisfied when $S$ is module-finite over $R$
or when $S=\Comp Rb$ for some ideal $\fb\subseteq\fa$.)
Indeed, we have $\HH_0(\Lotimes KS)\cong S/\fa S$; thus, if $\Lotimes KS\in\catdf(R)$,
then $S/\fa S$ is finitely generated over $R$, hence over $R/\fa R$. 
For the converse, note that each module $\HH_i(\Lotimes KS)$ is finitely generated over $S$, hence over $S/\fa S$;
thus, if the induced map $R/\fa\to S/\fa S$
is module-finite these homology modules are finitely generated over $R/\fa$, hence over $R$.

It follows that $S$ is $\fa$-adically finite over $R$ if and only if $\supp_R(S)\subseteq\VE(\fa)$ and the induced map $R/\fa\to S/\fa S$
is module-finite. 
\end{disc}

\subsection*{Base Change for Bass Classes}
Here is Theorem~\ref{prop151127ar} from the introduction.

\begin{thm}\label{prop151127a}
Let $M$ be an $\fa$-adic semidualizing $R$-complex, and
assume that $\fd_R(S)<\infty$.
Let $X\in\catd(R)$ be given, and consider the following conditions.
\begin{enumerate}[\rm(i)]
\item \label{prop151127a1}
$X\in\catb_M(R)$.
\item \label{prop151127a2}
$\Lotimes SX\in\catb_{M}(R)$ and $\supp_R(X)\subseteq\VE(\fa)$.
\item \label{prop151127a3}
$\Lotimes SX\in\catb_{\Lotimes SM}(S)$ and $\supp_R(X)\subseteq\VE(\fa)$.
\end{enumerate}
Then we have~\eqref{prop151127a1}$\implies$\eqref{prop151127a2}$\iff$\eqref{prop151127a3}.
The conditions~\eqref{prop151127a1}--\eqref{prop151127a3} are equivalent when 
at least one of the following conditions is satisfied.
\begin{enumerate}[\rm(1)]
\item\label{prop151127a4}
$S$ is $\fa$-adically finite over $R$ such that $\supp_R(S)=\VE(\fa)$.
\item\label{prop151127a5}
$\supp_R(S)\supseteq\VE(\fa)\bigcap\mspec(R)$,
and $\Lotimes KX\in\catdf(R)$.
\item\label{prop151127a6}
$S$ is  flat over $R$ with $\supp_R(S)\supseteq\VE(\fa)\bigcap\mspec(R)$, e.g., $S$ is faithfully flat.
\end{enumerate}
\end{thm}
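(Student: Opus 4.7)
The plan is to prove the theorem in three stages, following the pattern of the stability results in Section~\ref{sec130818dz}.

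The implication~\eqref{prop151127a1}$\implies$\eqref{prop151127a2} will follow from the forward direction of Theorem~\ref{prop151123a} applied with $F = S$, which has finite flat dimension by hypothesis, together with Foxby Equivalence~\ref{thm121116a}\eqref{thm121116a2} for the support condition on $X$. For~\eqref{prop151127a2}$\iff$\eqref{prop151127a3}, I will invoke~\cite[Theorem~5.6]{sather:asc} (cited just after Theorem~\ref{prop150525a}): the hypothesis $\fd_R(S)<\infty$ guarantees that $\Lotimes SM$ is $\fa S$-adically semidualizing over $S$, so that Proposition~\ref{prop140218b}\eqref{prop140218b2} applied to the $S$-complex $\Lotimes SX$ (with forgetful functor satisfying $Q(\Lotimes SX)=\Lotimes SX$ as an $R$-complex) gives the equivalence.

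For~\eqref{prop151127a2}$\implies$\eqref{prop151127a1} under condition~\eqref{prop151127a4} or~\eqref{prop151127a6}, I will invoke the converse direction of Theorem~\ref{prop151123a} with $F=S$, since these translate word-for-word into conditions~\eqref{prop151123a1} and~\eqref{prop151123a3} of that result. Under condition~\eqref{prop151127a5}, however, $F=S$ does not fit Theorem~\ref{prop151123a}\eqref{prop151123a2} for any natural choice of auxiliary homomorphism (taking $\id_R$ would require $S$ to be $\fa$-adically finite over $R$, while taking $\vf$ would require $\spec(S)\subseteq\VE(\fa S)$), so I will mimic the proof of Theorem~\ref{prop151123a}\eqref{prop151123a2} step-by-step. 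First, I will deduce $X\in\catdb(R)$ from $\Lotimes SX\in\catdb(R)$, $\supp_R(X)\subseteq\VE(\fa)$, and $\supp_R(S)\supseteq\VE(\fa)\bigcap\mspec(R)$ via~\cite[Theorem~4.1(c)]{sather:afbha}. Next, I will use the tensor-evaluation morphism $\omega_{MXS}$ from Fact~\ref{fact151222a}\eqref{fact151222a1} (its finite flat-dimension hypothesis holding because $\fd_R(S)<\infty$) together with Fact~\ref{thm130318aqqc} (applied to complexes whose cosupports lie in $\VE(\fa)$) to identify $\Lotimes S{\Rhom MX}\simeq\Rhom M{\Lotimes SX}\in\catdb(R)$. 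Propagating $\Lotimes KX\in\catdf(R)$ to $\Lotimes K{\Rhom MX}\in\catdf(R)$ via Lemma~\ref{prop151123z} and appealing again to~\cite[Theorem~4.1(c)]{sather:afbha} yields $\Rhom MX\in\catdb(R)$. Finally, $\Lotimes S{\xi^M_X}$ corresponds under these identifications to $\xi^M_{\Lotimes SX}$, which is an isomorphism by~\eqref{prop151127a2}; I will apply~\cite[Theorem~4.1(b)]{sather:afbha} to transfer this back to $\xi^M_X$, using Lemma~\ref{prop151123z} once more to verify the Koszul-finiteness condition on $\Lotimes M{\Rhom MX}$.

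The hardest part will be case~\eqref{prop151127a5}: since it does not fit any direct instantiation of Theorem~\ref{prop151123a}\eqref{prop151123a2}, I must route around that result, carefully tracking the hypothesis $\Lotimes KX\in\catdf(R)$ through each auxiliary complex and appealing directly to the Koszul-based finiteness-and-isomorphism transfer results of~\cite{sather:afbha}.
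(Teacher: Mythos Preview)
Your approach matches the paper's for the implication \eqref{prop151127a1}$\implies$\eqref{prop151127a2}, the equivalence \eqref{prop151127a2}$\iff$\eqref{prop151127a3}, and the converse under conditions \eqref{prop151127a4} and \eqref{prop151127a6}: the paper simply cites Theorem~\ref{prop151123a} and Proposition~\ref{prop140218b}. (Incidentally, your citation of part~\eqref{prop140218b2} of Proposition~\ref{prop140218b} is the correct one for Bass classes; the paper's citation of part~\eqref{prop140218b1} appears to be a typo.)

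For condition~\eqref{prop151127a5} you are more careful than the paper. The paper's proof asserts without comment that Theorem~\ref{prop151123a} applies, but you correctly observe that setting $F=S$ does not literally satisfy condition~\eqref{prop151123a2} there: taking the auxiliary homomorphism to be $\vf$ itself would require $S$ to be $\fa S$-adically finite over $S$, forcing $\spec(S)\subseteq\VE(\fa S)$, which is not assumed. The authors presumably have in mind that the underlying results \cite[Theorem~4.1(b),(c)]{sather:afbha} invoked in the proof of Theorem~\ref{prop151123a}\eqref{prop151123a2} do not actually use the $\fa S$-adic finiteness of $F$, only the conditions $\vf^*(\supp_S(F))\supseteq\VE(\fa)\cap\mspec(R)$ and $\Lotimes KX\in\catdf(R)$; your decision to spell out the argument directly is a reasonable response to this gap in the exposition.

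There is one wrinkle in your workaround. You propose to upgrade $\Lotimes K{\omega_{MXS}}$ to an isomorphism $\omega_{MXS}$ via Fact~\ref{thm130318aqqc}, which needs both source and target to have cosupport in $\VE(\fa)$. While $\cosupp_R(\Rhom M{\Lotimes SX})\subseteq\supp_R(M)\subseteq\VE(\fa)$ is clear from Fact~\ref{cor130528a2}, the inclusion $\cosupp_R(\Lotimes S{\Rhom MX})\subseteq\VE(\fa)$ is not: tensoring with $S$ need not preserve cosupport, and none of Lemma~\ref{lem150612b} or Lemma~\ref{prop151106a} applies to $S$ here. The fix is to follow the route of the proof of Theorem~\ref{prop151123a} more closely and avoid upgrading $\omega_{MXS}$ altogether: work at the Koszul level, use only the isomorphism $\Lotimes K{\omega_{MXS}}$ supplied by Fact~\ref{fact151222a}\eqref{fact151222a1} to identify $\Lotimes{(\Lotimes K{\Rhom MX})}S$ with $\Lotimes K{\Rhom M{\Lotimes SX}}\in\catdb(R)$, then apply \cite[Theorem~4.1(c)]{sather:afbha} to conclude $\Lotimes K{\Rhom MX}\in\catdb(R)$, and finally invoke Fact~\ref{lem150604a} (using $\cosupp_R(\Rhom MX)\subseteq\VE(\fa)$) to get $\Rhom MX\in\catdb(R)$. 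The isomorphism step for $\xi^M_X$ is handled analogously via diagram~\eqref{eq151123a}, again staying at the Koszul level before invoking \cite[Theorem~4.1(b)]{sather:afbha}.
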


\begin{proof}
The implication~\eqref{prop151127a1}$\implies$\eqref{prop151127a2} and its conditional converse follow from
Theorem~\ref{prop151123a}. 
The equivalence~\eqref{prop151127a2}$\iff$\eqref{prop151127a3}
is from Proposition~\ref{prop140218b}\eqref{prop140218b1}. 
\end{proof}

The point of the next result is to shift the support conditions in Theorem~\ref{prop151127a}.

\begin{cor}\label{prop151213a}
Let $M$ be an $\fa$-adic semidualizing $R$-complex, and
assume that $\fd_R(S)<\infty$.
Let $X\in\catd(R)$ be such that $\supp_R(X)\subseteq\supp_R(S)$.
Consider the following conditions.
\begin{enumerate}[\rm(i)]
\item \label{prop151213a1}
$X\in\catb_M(R)$.
\item \label{prop151213a2}
$\Lotimes SX\in\catb_{M}(R)$.
\item \label{prop151213a3}
$\Lotimes SX\in\catb_{\Lotimes SM}(S)$.
\end{enumerate}
Then we have~\eqref{prop151213a1}$\implies$\eqref{prop151213a2}$\iff$\eqref{prop151213a3}.
The conditions~\eqref{prop151213a1}--\eqref{prop151213a3} are equivalent when 
at least one of the following conditions is satisfied.
\begin{enumerate}[\rm(1)]
\item\label{prop151213a4}
$S$ is $\fa$-adically finite such that $\supp_R(S)=\VE(\fa)$.
\item\label{prop151213a5}
$\supp_R(S)\supseteq\VE(\fa)\bigcap\mspec(R)$,
and $\Lotimes KX\in\catdf(R)$.
\item\label{prop151213a6}
$S$ is  flat over $R$ with $\supp_R(S)\supseteq\VE(\fa)\bigcap\mspec(R)$, e.g., $S$ is faithfully flat.
\end{enumerate}
\end{cor}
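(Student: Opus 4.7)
The plan is to deduce this corollary directly from Theorem~\ref{prop151127a} by observing that, under the new hypothesis $\supp_R(X)\subseteq\supp_R(S)$, the membership $\Lotimes SX\in\catb_M(R)$ automatically supplies the auxiliary condition $\supp_R(X)\subseteq\VE(\fa)$ required by that theorem.

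The key support computation runs as follows. Assuming $\Lotimes SX\in\catb_M(R)$, Foxby Equivalence~\ref{thm121116a}\eqref{thm121116a2} yields $\supp_R(\Lotimes SX)\subseteq\VE(\fa)$, while Fact~\ref{cor130528a2} gives $\supp_R(\Lotimes SX)=\supp_R(S)\cap\supp_R(X)$. Combining these with the hypothesis $\supp_R(X)\subseteq\supp_R(S)$ collapses to
\[\supp_R(X)=\supp_R(S)\cap\supp_R(X)=\supp_R(\Lotimes SX)\subseteq\VE(\fa),\]
so condition~\eqref{prop151213a2} here promotes to condition~\eqref{prop151127a2} of Theorem~\ref{prop151127a}.

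With this in hand, the three implications fall out cleanly. For \eqref{prop151213a1}$\implies$\eqref{prop151213a2}, I will invoke the corresponding implication of Theorem~\ref{prop151127a}, which demands no \emph{a priori} support condition on $X$. For \eqref{prop151213a2}$\iff$\eqref{prop151213a3}, the hypothesis $\fd_R(S)<\infty$ together with~\cite[Theorem~5.6]{sather:asc} (quoted in the discussion preceding Proposition~\ref{prop140218b}) ensures that $\Lotimes SM$ is $\fa S$-adically semidualizing over $S$, so that Proposition~\ref{prop140218b}\eqref{prop140218b2} applied to $Y=\Lotimes SX$ furnishes the equivalence. For the converse \eqref{prop151213a2}$\implies$\eqref{prop151213a1} under any of \eqref{prop151213a4}--\eqref{prop151213a6}, the key computation upgrades the hypotheses to those of Theorem~\ref{prop151127a}\eqref{prop151127a2} together with the matching conditions~\eqref{prop151127a4}--\eqref{prop151127a6}, and that theorem supplies the conclusion.

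The main obstacle is essentially nonexistent: this corollary is a formal consequence of Theorem~\ref{prop151127a} once the support identity $\supp_R(X)=\supp_R(S)\cap\supp_R(X)$ is exploited to trade the hypothesis $\supp_R(X)\subseteq\VE(\fa)$ for the weaker hypothesis $\supp_R(X)\subseteq\supp_R(S)$. The only subtlety is remembering to verify that $\Lotimes SM$ is $\fa S$-adically semidualizing before invoking Proposition~\ref{prop140218b}, which is precisely why the flat dimension hypothesis on $S$ is retained in the statement.
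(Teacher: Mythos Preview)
Your proposal is correct and follows essentially the same route as the paper: both reduce to Theorem~\ref{prop151127a} by showing that $\Lotimes SX\in\catb_M(R)$ together with $\supp_R(X)\subseteq\supp_R(S)$ forces $\supp_R(X)\subseteq\VE(\fa)$ via the identity $\supp_R(X)=\supp_R(S)\cap\supp_R(X)=\supp_R(\Lotimes SX)$. The only cosmetic difference is that the paper cites Theorem~\ref{prop151123a} directly for \eqref{prop151213a1}$\implies$\eqref{prop151213a2}, whereas you route through Theorem~\ref{prop151127a}; and you correctly invoke Proposition~\ref{prop140218b}\eqref{prop140218b2} for the Bass-class equivalence (the paper's reference to part~\eqref{prop140218b1} there appears to be a typo).
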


\begin{proof}
Conditions~\eqref{prop151213a2} and~\eqref{prop151213a3} are equivalent by Proposition~\ref{prop140218b}\eqref{prop140218b1},
and the implication~\eqref{prop151213a1}$\implies$\eqref{prop151213a2} is from Theorem~\ref{prop151123a}. 
By Theorem~\ref{prop151127a}, it remains to assume that $\Lotimes SX\in\catb_{M}(R)$,
and show that $\supp_R(X)\subseteq\VE(\fa)$.
The assumption $\supp_R(X)\subseteq\supp_R(S)$ explains the first step in the next display,
and the second step is from Fact~\ref{cor130528a2}.
\begin{align*}
\supp_R(X)
&=\supp_R(X)\bigcap\supp_R(S)
=\supp_R(\Lotimes SX)
\subseteq\VE(\fa)
\end{align*}
The last step here is from Foxby Equivalence~\ref{thm121116a}\eqref{thm121116a2}, since $\Lotimes SX\in\catb_{M}(R)$.
\end{proof}

\begin{cor}\label{prop151127b}
Let $M$ be an $\fa$-adic semidualizing $R$-complex.
Given an $R$-complex $X\in\catd(R)$,  the following conditions are equivalent.
\begin{enumerate}[\rm(i)]
\item \label{prop151127b1}
$X\in\catb_M(R)$.
\item \label{prop151127b2}
$\Lotimes {\Comp Ra}X\in\catb_{M}(R)$ and $\supp_R(X)\subseteq\VE(\fa)$.
\item \label{prop151127b3}
$\Lotimes {\Comp Ra}X\in\catb_{\Lotimes {\Comp Ra}M}({\Comp Ra})$ and $\supp_R(X)\subseteq\VE(\fa)$.
\end{enumerate}
\end{cor}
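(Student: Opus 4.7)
The plan is to apply Theorem~\ref{prop151127a} with $S:=\Comp Ra$, so the work reduces to verifying its hypotheses and checking that the ``conditional converse'' situation (3) is automatically satisfied in this case. First I would note that $\Comp Ra$ is flat over $R$, so we have $\fd_R(\Comp Ra)<\infty$, and the inclusion $\fa\Comp Ra\subseteq\Jac(\Comp Ra)\subsetneq\Comp Ra$ ensures that the running hypothesis $\fa S\neq S$ from Notation~\ref{notn160110a} holds.

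Next, the key point is that condition~\eqref{prop151127a6} of Theorem~\ref{prop151127a} is satisfied. The flatness of $\Comp Ra$ is clear, and the support condition amounts to the observation that for every maximal ideal $\fm\supseteq\fa$ we have $\fm\Comp Ra\neq\Comp Ra$ (since $\fa\Comp Ra$ is contained in the Jacobson radical of $\Comp Ra$), hence $\kappa(\fm)\otimes_R\Comp Ra\neq 0$, which gives $\fm\in\supp_R(\Comp Ra)$. Because $\Comp Ra$ is flat, small support agrees with ordinary support, so
\begin{equation*}
\supp_R(\Comp Ra)\supseteq\VE(\fa)\bigcap\mspec(R).
\end{equation*}

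With these hypotheses in hand, Theorem~\ref{prop151127a} gives the equivalence of the three conditions
\begin{enumerate}[\rm(i)]
\item $X\in\catb_M(R)$,
\item $\Lotimes{\Comp Ra}X\in\catb_M(R)$ and $\supp_R(X)\subseteq\VE(\fa)$,
\item $\Lotimes{\Comp Ra}X\in\catb_{\Lotimes{\Comp Ra}M}(\Comp Ra)$ and $\supp_R(X)\subseteq\VE(\fa)$,
\end{enumerate}
which is exactly the statement of the corollary. There is no real obstacle here; the content is already contained in Theorem~\ref{prop151127a}, and the only thing to double-check is the support computation for $\Comp Ra$ above, which reduces to the standard fact that $\fa\Comp Ra$ lies in the Jacobson radical.
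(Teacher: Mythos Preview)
Your proposal is correct and follows exactly the paper's approach: apply Theorem~\ref{prop151127a} with $S=\Comp Ra$, noting that $\Comp Ra$ is flat over $R$ with $\supp_R(\Comp Ra)\supseteq\VE(\fa)\cap\mspec(R)$, so condition~\eqref{prop151127a6} there applies. The only minor quibble is that the cleanest justification for $\fm\Comp Ra\neq\Comp Ra$ when $\fm\supseteq\fa$ is the standard isomorphism $\Comp Ra/\fa\Comp Ra\cong R/\fa$ rather than the Jacobson radical remark, but the conclusion is correct and well known.
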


\begin{proof}
The completion $\Comp Ra$ 
is  flat over $R$ with $\supp_R(\Comp Ra)\supseteq\VE(\fa)\bigcap\mspec(R)$.
Thus, the desired result follows from Theorem~\ref{prop151127a}, using condition~\eqref{prop151127a6}.
\end{proof}

\subsection*{Base Change for Auslander Classes}

\begin{thm}\label{prop151127c}
Let $M$ be an $\fa$-adic semidualizing $R$-complex, and
assume that $\fd_R(S)<\infty$.
Let $X\in\catd(R)$ be given, and consider the following conditions.
\begin{enumerate}[\rm(i)]
\item \label{prop151127c1}
$X\in\cata_M(R)$.
\item \label{prop151127c2}
$\LL a{\Lotimes SX}\in\cata_{M}(R)$ and $\cosupp_R(X)\subseteq\VE(\fa)$.
\item \label{prop151127c3}
$\LLS a{\Lotimes SX}\in\cata_{\Lotimes SM}(S)$ and $\cosupp_R(X)\subseteq\VE(\fa)$.
\end{enumerate}
Then we have~\eqref{prop151127c1}$\implies$\eqref{prop151127c2}$\iff$\eqref{prop151127c3}.
The conditions~\eqref{prop151127c1}--\eqref{prop151127c3} are equivalent when 
at least one of the following conditions is satisfied.
\begin{enumerate}[\rm(1)]
\item\label{prop151127c4}
$S$ is $\fa$-adically finite over $R$ such that $\supp_R(S)=\VE(\fa)$.
\item\label{prop151127c5}
$\supp_R(S)\supseteq\VE(\fa)\bigcap\mspec(R)$,
and $\Lotimes KX\in\catdf(R)$.
\item\label{prop151127c6}
$S$ is  flat over $R$ with $\supp_R(S)\supseteq\VE(\fa)\bigcap\mspec(R)$, e.g., $S$ is faithfully flat.
\end{enumerate}
\end{thm}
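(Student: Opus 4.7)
The plan is to follow the template of Theorem~\ref{prop151127a}, replacing its Bass-class tools with the corresponding Auslander-class ingredients. The two essential inputs are Theorem~\ref{prop151123b} applied with $F:=S$, together with Proposition~\ref{prop140218b}\eqref{prop140218b1} (which applies because $\Lotimes SM$ is $\fa S$-adically semidualizing over $S$ by~\cite[Theorem~5.6]{sather:asc}).

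For the implication \eqref{prop151127c1}$\implies$\eqref{prop151127c2} and its conditional converse under each of \eqref{prop151127c4}--\eqref{prop151127c6}, I would invoke Theorem~\ref{prop151123b} on $X$ with $F:=S\in\catdb(R)$; this is legitimate since $\fd_R(S)<\infty$. The three sufficient conditions \eqref{prop151127c4}--\eqref{prop151127c6} here specialize exactly to hypotheses \eqref{prop151123a1}--\eqref{prop151123a3} of Theorem~\ref{prop151123a} (with $\vf\colon R\to S$ itself serving as the auxiliary ring homomorphism required in~\eqref{prop151127c5}), so both the forward direction and the conditional converse are immediate applications of Theorem~\ref{prop151123b}.

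For the remaining equivalence \eqref{prop151127c2}$\iff$\eqref{prop151127c3}, the co-support condition on $X$ is common to both statements, so it suffices to establish
\[
\LL a{\Lotimes SX}\in\cata_M(R)\iff\LLS a{\Lotimes SX}\in\cata_{\Lotimes SM}(S).
\]
By Proposition~\ref{prop140218b}\eqref{prop140218b1} applied to the $S$-complex $Y:=\LLS a{\Lotimes SX}$, this reduces to the natural identification $Q(Y)\simeq\LL a{\Lotimes SX}$ in $\catd(R)$, that is, to the standard fact that derived $\fa S$-adic completion of an $S$-complex agrees, after applying the forgetful functor, with its derived $\fa$-adic completion. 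In our setting this identification follows from the presentations $\LL a{-}\simeq\Rhom{\RG aR}{-}$ and $\LLS a{-}\simeq\Rhom[S]{\RGS aS}{-}$ of Fact~\ref{fact130619b}, the base-change isomorphism $\RGS aS\simeq\Lotimes S{\RG aR}$, and Hom-tensor adjunction along $\vf$. This derived-completion transfer across $Q$ is the only step that is not completely mechanical; once it is in hand, everything assembles directly from Theorem~\ref{prop151123b} and Proposition~\ref{prop140218b}\eqref{prop140218b1}.
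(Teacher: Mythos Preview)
Your proposal is correct and matches the paper's own argument essentially verbatim: the paper likewise reduces to Theorem~\ref{prop151123b} (with $F=S$) and Proposition~\ref{prop140218b}\eqref{prop140218b1}, after noting the identification $Q(\LLS a{\Lotimes SX})\simeq\LL a{\Lotimes SX}$, for which it cites \cite[Lemma~5.2]{sather:afcc} rather than unpacking the adjunction argument you sketch.
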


\begin{proof}
By~\cite[Lemma~5.2]{sather:afcc}, we have $Q(\LLS a{\Lotimes SX})\simeq\LL a{\Lotimes SX}$ in $\catd(R)$.
Thus, one verifies the desired conclusions as in the proof of Theorem~\ref{prop151127a},
using Theorem~\ref{prop151123b} and Proposition~\ref{prop140218b}\eqref{prop140218b1}.
\end{proof}

One might expect a version of Corollary~\ref{prop151213a} to follow here. 
The key point of the proof of such a result would be to  assume that $\cosupp_R(X)\subseteq\supp_R(S)$ and
$\LL a{\Lotimes SX}\in\cata_{M}(R)$,
and then show that $\cosupp_R(X)\subseteq\VE(\fa)$. 
However, the next example shows that this implication fails in general.

\begin{ex}\label{ex151214a}
Let $k$ be a field, and consider the localized polynomial ring $R=k[Y]_{Yk[Y]}$.
Set $\fa:=YR$ and $E:=E_R(k)$.
Since $\Comp Ra$ is faithfully flat over $R$ and $R$ is not $\fa$-adically complete, we have 
$$\cosupp_R(R)=\spec(R)=\supp_R(\Comp Ra)\not\subseteq\VE(\fa)$$
by~\cite[Proposition~6.10]{sather:scc}. On the other hand, 
we have
$$\LL a{\Lotimes{\Comp Ra}R}\simeq\LL a{\Comp Ra}\simeq\Comp Ra\in\cata_{E}(R)$$
by Proposition~\ref{prop130612a}.
\end{ex}

The next result is proved like Corollary~\ref{prop151127b}, using Theorem~\ref{prop151127c}.

\begin{cor}\label{prop151127d}
Let $M$ be an $\fa$-adic semidualizing $R$-complex, and
let $X\in\catd(R)$ be given. Then the following conditions are equivalent.
\begin{enumerate}[\rm(i)]
\item \label{prop151127d1}
$X\in\cata_M(R)$.
\item \label{prop151127d2}
$\LL a{\Lotimes {\Comp Ra}X}\in\cata_{M}(R)$ and $\cosupp_R(X)\subseteq\VE(\fa)$.
\item \label{prop151127d3}
$\LLa a{\Lotimes {\Comp Ra}X}\in\cata_{\Lotimes {\Comp Ra}M}({\Comp Ra})$ and $\cosupp_R(X)\subseteq\VE(\fa)$.
\end{enumerate}
\end{cor}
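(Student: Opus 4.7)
The plan is to apply Theorem~\ref{prop151127c} with the choice $S:=\Comp Ra$ and $\vf\colon R\to\Comp Ra$ the natural completion homomorphism. The standing hypothesis $\fd_R(S)<\infty$ of that theorem is immediate, since $\Comp Ra$ is a flat $R$-algebra, which also guarantees the implications \eqref{prop151127c1}$\implies$\eqref{prop151127c2}$\iff$\eqref{prop151127c3}. To upgrade these to full equivalences, I will invoke condition~(\ref{prop151127c6}) of Theorem~\ref{prop151127c}: the completion $\Comp Ra$ is flat over $R$, and one has $\supp_R(\Comp Ra)\supseteq\VE(\fa)\bigcap\mspec(R)$. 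This support containment is exactly the ingredient used in the proof of Corollary~\ref{prop151127b}, so no new verification is required here.

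Once the hypotheses are checked, Theorem~\ref{prop151127c} delivers the equivalence of its three conditions. The remaining task is purely notational: under the specialization $S=\Comp Ra$, the expression $\LLS a{-}$ unfolds to $\mathbf{L}\Lambda^{\fa\Comp Ra}(-)$, which coincides with $\LLano a$ as appearing in condition~\eqref{prop151127d3}. Consequently the three conditions of Theorem~\ref{prop151127c} translate verbatim into \eqref{prop151127d1}--\eqref{prop151127d3} of the present corollary.

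I do not anticipate any real obstacle, since this result is the Auslander-class twin of Corollary~\ref{prop151127b}, built on Theorem~\ref{prop151127c} in precisely the way Corollary~\ref{prop151127b} was built on Theorem~\ref{prop151127a}. The only step worth a second look is the support condition on $\Comp Ra$, which is standard, and the cosupport containment $\cosupp_R(X)\subseteq\VE(\fa)$ is already present as part of \eqref{prop151127d2} and \eqref{prop151127d3}, so no extra massage of the implication list from Theorem~\ref{prop151127c} is needed.
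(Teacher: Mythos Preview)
Your proposal is correct and follows exactly the approach of the paper, which simply states that the result is proved like Corollary~\ref{prop151127b}, using Theorem~\ref{prop151127c}. You have merely spelled out the details: take $S=\Comp Ra$, use its flatness and the support condition $\supp_R(\Comp Ra)\supseteq\VE(\fa)\cap\mspec(R)$ to invoke condition~\eqref{prop151127c6} of Theorem~\ref{prop151127c}, and identify $\LLS a{-}$ with $\LLano a$.
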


The next two results show how to remove the derived local homology from the previous two results, 
in the presence of extra finiteness conditions. 

\begin{cor}\label{prop151127e}
Let $M$ be an $\fa$-adic semidualizing $R$-complex, and
assume that $S$ is module-finite over $R$ with $\fd_R(S)<\infty$.
Let $X\in\catd(R)$ be given, and consider the following conditions.
\begin{enumerate}[\rm(i)]
\item \label{prop151127e1}
$X\in\cata_M(R)$.
\item \label{prop151127e2}
${\Lotimes SX}\in\cata_{M}(R)$ and $\cosupp_R(X)\subseteq\VE(\fa)$.
\item \label{prop151127e3}
${\Lotimes SX}\in\cata_{\Lotimes SM}(S)$ and $\cosupp_R(X)\subseteq\VE(\fa)$.
\end{enumerate}
Then we have~\eqref{prop151127e1}$\implies$\eqref{prop151127e2}$\iff$\eqref{prop151127e3}.
The conditions~\eqref{prop151127e1}--\eqref{prop151127e3} are equivalent when 
at least one of the following conditions is satisfied.
\begin{enumerate}[\rm(1)]
\item\label{prop151127e4}
$\supp_R(S)=\VE(\fa)$.
\item\label{prop151127e5}
$\supp_R(S)\supseteq\VE(\fa)\bigcap\mspec(R)$,
and $\Lotimes KX\in\catdf(R)$.
\item\label{prop151127e6}
$S$ is  flat over $R$ with $\supp_R(S)\supseteq\VE(\fa)\bigcap\mspec(R)$, e.g., $S$ is faithfully flat.
\end{enumerate}
\end{cor}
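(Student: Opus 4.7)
The plan is to deduce Corollary~\ref{prop151127e} directly from Theorem~\ref{prop151127c} by showing that, under the module-finite hypothesis on $S$, the derived local homology $\LL a{\Lotimes SX}$ agrees with $\Lotimes SX$ whenever the relevant co-support condition on $X$ holds. Since Theorem~\ref{prop151127c} has exactly the same structure except with $\LL a$ attached, eliminating that local homology is the only new ingredient needed.

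The central observation is the following. Since $S$ is module-finite over $R$, we have $S\in\catdfb(R)$; combined with $\fd_R(S)<\infty$ and the noetherian hypothesis on $R$, this forces $\pd_R(S)<\infty$ (a finitely generated flat module over a noetherian ring is projective, so a bounded flat resolution refines to a bounded projective one). Hence Lemma~\ref{lem150612b} applies and yields
$$\cosupp_R(\Lotimes SX)=\supp_R(S)\bigcap\cosupp_R(X).$$
In particular, whenever $\cosupp_R(X)\subseteq\VE(\fa)$, we obtain $\cosupp_R(\Lotimes SX)\subseteq\VE(\fa)$, and Fact~\ref{cor130528a3} then gives the isomorphism $\LL a{\Lotimes SX}\simeq\Lotimes SX$ in $\catd(R)$.

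Given this isomorphism, condition~\eqref{prop151127e2} of Corollary~\ref{prop151127e} is literally equivalent to condition~\eqref{prop151127c2} of Theorem~\ref{prop151127c}, and likewise \eqref{prop151127e3} matches \eqref{prop151127c3}. For the implication \eqref{prop151127e1}$\implies$\eqref{prop151127e2}, observe that $X\in\cata_M(R)$ forces $\cosupp_R(X)\subseteq\VE(\fa)$ by Foxby Equivalence~\ref{thm121116a}\eqref{thm121116a3}, so the isomorphism above is available and this implication reduces to \eqref{prop151127c1}$\implies$\eqref{prop151127c2} of Theorem~\ref{prop151127c}. The equivalence \eqref{prop151127e2}$\iff$\eqref{prop151127e3} is given by Proposition~\ref{prop140218b}\eqref{prop140218b1} just as in the proof of Theorem~\ref{prop151127c}, using that $\Lotimes SM$ is $\fa S$-adically semidualizing over $S$ by the aforementioned result from~\cite{sather:asc} whose hypothesis $\fd_R(S)<\infty$ is in force. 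Finally, hypotheses~\eqref{prop151127e4}--\eqref{prop151127e6} are the same as conditions~\eqref{prop151127c4}--\eqref{prop151127c6} of Theorem~\ref{prop151127c} (condition~\eqref{prop151127e4} drops the redundant $\fa$-adic finiteness of $S$, which now follows from module-finiteness together with $\supp_R(S)=\VE(\fa)$ via Remark~\ref{disc151213b}), so the converse implication follows at once.

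There is no real obstacle here: the proof is a bookkeeping reduction to Theorem~\ref{prop151127c}. The only points that require a moment's attention are verifying that the module-finite plus finite flat dimension hypothesis supplies the $\pd_R(S)<\infty$ condition needed to apply Lemma~\ref{lem150612b}, and confirming that conditions~\eqref{prop151127e4}--\eqref{prop151127e6} translate faithfully into the corresponding conditions of the theorem when the module-finite hypothesis is available.
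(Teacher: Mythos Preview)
Your proof is correct and uses essentially the same key ingredient as the paper: the observation that for $S\in\catdfb(R)$ with finite projective dimension, Lemma~\ref{lem150612b} gives $\cosupp_R(\Lotimes SX)\subseteq\cosupp_R(X)\subseteq\VE(\fa)$, whence $\LL a{\Lotimes SX}\simeq\Lotimes SX$ by Fact~\ref{cor130528a3}. The only difference is organizational: the paper cites Corollary~\ref{prop151123c} directly (together with Proposition~\ref{prop140218b}\eqref{prop140218b1}), since that corollary already packages exactly this $\LLno a$-removal argument, whereas you reduce to Theorem~\ref{prop151127c} and reprove the removal step inline. Both routes pass through the same underlying results and the same co-support computation; the paper's version is simply shorter because the relevant lemma has already been isolated as Corollary~\ref{prop151123c}.
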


\begin{proof}
The module-finite assumption on $S$ says that $S\in\catdfb(R)$. 
Thus, the desired conclusions follow from Corollary~\ref{prop151123c} and Proposition~\ref{prop140218b}\eqref{prop140218b1}.
\end{proof}

\begin{cor}\label{prop151127f}
Let $M$ be an $\fa$-adic semidualizing $R$-complex.
Given an $R$-complex $X\in\catdf(R)$,  the following conditions are equivalent.
\begin{enumerate}[\rm(i)]
\item \label{prop151127f1}
$X\in\cata_M(R)$.
\item \label{prop151127f2}
${\Lotimes {\Comp Ra}X}\in\cata_{M}(R)$ and $\cosupp_R(X)\subseteq\VE(\fa)$.
\item \label{prop151127f3}
${\Lotimes {\Comp Ra}X}\in\cata_{\Lotimes {\Comp Ra}M}({\Comp Ra})$ and $\cosupp_R(X)\subseteq\VE(\fa)$.
\end{enumerate}
\end{cor}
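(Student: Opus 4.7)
The plan is to reduce Corollary~\ref{prop151127f} to the previously-established Corollary~\ref{prop151127d} and Proposition~\ref{prop140218b}\eqref{prop140218b1}, using the finiteness hypothesis $X\in\catdf(R)$ to strip away the derived local homology $\mathbf{L}\Lambda^{\fa}$ appearing in the former.

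The equivalence (ii)$\iff$(iii) will be immediate from Proposition~\ref{prop140218b}\eqref{prop140218b1}: the flatness of $\Comp Ra$ over $R$ combined with \cite[Theorem~5.6]{sather:asc} guarantees that $\Lotimes{\Comp Ra}M$ is $\fa\Comp Ra$-adically semidualizing over $\Comp Ra$, which supplies the hypothesis required by that proposition.

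For (ii)$\Rightarrow$(i), I will argue that if $\Lotimes{\Comp Ra}X\in\cata_M(R)$, then Foxby Equivalence~\ref{thm121116a}\eqref{thm121116a3} yields $\cosupp_R(\Lotimes{\Comp Ra}X)\subseteq\VE(\fa)$, so by Fact~\ref{cor130528a3} we obtain the isomorphism $\Lotimes{\Comp Ra}X\simeq\LL a{\Lotimes{\Comp Ra}X}$ in $\catd(R)$; combined with the hypothesis $\cosupp_R(X)\subseteq\VE(\fa)$, Corollary~\ref{prop151127d} then forces $X\in\cata_M(R)$. Conversely, for (i)$\Rightarrow$(ii), the inclusion $\cata_M(R)\subseteq\catdb(R)$ forces $X\in\catdfb(R)$, so $X$ admits a semi-projective resolution $P\to X$ by finitely generated free $R$-modules; since $\Lambda^{\fa}$ coincides with $\Otimes{\Comp Ra}-$ on finitely generated free modules, the complex $\Lambda^{\fa}(P)\cong\Otimes{\Comp Ra}P$ simultaneously computes $\LL aX$ and $\Lotimes{\Comp Ra}X$, giving a natural isomorphism $\LL aX\simeq\Lotimes{\Comp Ra}X$ in $\catd(R)$. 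Combining this with $X\simeq\LL aX$ (from Fact~\ref{cor130528a3} and the cosupport condition supplied by Foxby Equivalence), one obtains $X\simeq\Lotimes{\Comp Ra}X$, so in particular $\Lotimes{\Comp Ra}X\in\cata_M(R)$.

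The main obstacle is the clean identification $\LL aX\simeq\Lotimes{\Comp Ra}X$ for $X\in\catdfb(R)$, which is the sole place in the argument where the finiteness hypothesis $X\in\catdf(R)$ is genuinely used. This fact, that derived $\fa$-adic completion agrees with ordinary $\fa$-adic completion on bounded complexes of finitely generated modules, is standard in the subject: it follows directly from the flatness of $\Comp Ra$ over $R$ and the term-wise resolution argument sketched above.
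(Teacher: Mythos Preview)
Your argument is correct and follows the same overall strategy as the paper: reduce to Corollary~\ref{prop151127d} by eliminating the $\LLno a$. The execution differs in a minor but pleasant way. The paper reduces uniformly to $X\in\catdfb(R)$ and then observes that $\Lotimes{\Comp Ra}X\in\catdfb(\Comp Ra)$ has $\fa$-adically complete homology modules, so Fact~\ref{cor130528a}\ref{cor130528a3} gives $\LL a{\Lotimes{\Comp Ra}X}\simeq\Lotimes{\Comp Ra}X$ (and the $\Comp Ra$-analogue), after which Corollary~\ref{prop151127d} finishes all three equivalences at once. You instead handle the two implications separately: for (i)$\Rightarrow$(ii) you use the degree-wise-finite free resolution to identify $\LL aX\simeq\Lotimes{\Comp Ra}X$ and then $X\simeq\LL aX$; for (ii)$\Rightarrow$(i) you extract $\cosupp_R(\Lotimes{\Comp Ra}X)\subseteq\VE(\fa)$ from Foxby Equivalence and hence $\Lotimes{\Comp Ra}X\simeq\LL a{\Lotimes{\Comp Ra}X}$, which feeds directly into Corollary~\ref{prop151127d}. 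Your treatment of (ii)$\Rightarrow$(i) is arguably cleaner, since it does not invoke the finiteness hypothesis at all in that direction; the finiteness is genuinely needed only for (i)$\Rightarrow$(ii), exactly where you use it.
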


\begin{proof}
Assume without loss of generality that $\cosupp_R(X)\subseteq\VE(\fa)$.
Then~\cite[Theorem~4.2(c)]{sather:afbha} implies that $X\in\catdb(R)$ if and only if $\Lotimes{\Comp Ra}X\in\catdb(R)$.
Thus, we assume without loss of generality that $X\in\catdb(R)$, that is, $X\in\catdfb(R)$.
It follows that $\Lotimes{\Comp Ra}X\in\catdfb(\Comp Ra)$, so each homology module $\HH_i(\Lotimes{\Comp Ra}X)$
is $\fa$-adically complete over $R$ and $\fa\Comp Ra$-adically complete over $\Comp Ra$.
Fact~\ref{cor130528a3} thus implies that
$\LL a{\Lotimes {\Comp Ra}X}\simeq
\Lotimes{\Comp Ra}X$
in $\catd(R)$, and
$\LLa a{\Lotimes {\Comp Ra}X}\simeq
\Lotimes{\Comp Ra}X$
in $\catd(\Comp Ra)$.
Thus, the desired conclusions follow from Corollary~\ref{prop151127d}.
\end{proof}

\subsection*{Local-Global Behavior}
To keep the notation under control in the next few results, 
we write $U^{-1}X$ for $\Lotimes{(U^{-1}R)} X$, and similarly for $U^{-1}M$, $X_\p$, etc.
Note that in each result, each localization of $M$ is appropriately adically semidualizing
over the localized ring by~\cite[Theorem~5.7]{sather:asc}; this is why we restrict to localizations that are well-behaved with
respect to $\fa$. In turn, this is why we need to assume that $\supp_R(X)\subseteq \VE(\fa)$: for instance, 
if $\n\in\mspec(R)\ssm\VE(\fa)$, then $X=R/\n$ satisfies condition~\eqref{thm151214a4} in the theorem, but not condition~\eqref{thm151214a1}. 

\begin{thm}\label{thm151214a}
Let $M$ be an $\fa$-adic semidualizing $R$-complex, and let $X\in\catdb(R)$ be such that $\supp_R(X)\subseteq\VE(\fa)$.
Then the following conditions are equivalent.
\begin{enumerate}[\rm(i)]
\item\label{thm151214a1}
$X\in\catb_M(R)$.
\item\label{thm151214a2}
for each multiplicatively closed subset $U\subseteq R$ such that $U^{-1}\fa \neq U^{-1}R$, 
we have $U^{-1}X\in \catb_{U^{-1}M}(U^{-1}R)$.
\item\label{thm151214a3}
For all $\p\in\VE(\fa)$, we have $X_\p\in\catb_{M_\p}(R_\p)$.
\item\label{thm151214a4}
For all $\m\in\VE(\fa)\bigcap\mspec(R)$, we have $X_\m\in\catb_{M_\m}(R_\m)$.
\end{enumerate}
\end{thm}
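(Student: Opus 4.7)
We prove the cycle (i)$\Rightarrow$(ii)$\Rightarrow$(iii)$\Rightarrow$(iv)$\Rightarrow$(i). For (i)$\Rightarrow$(ii), the localization $S:=U^{-1}R$ is flat over $R$ with $\fa S\neq S$ by hypothesis on $U$, so Theorem~\ref{prop151127a} gives $U^{-1}X\in\catb_M(R)$; Proposition~\ref{prop140218b}\eqref{prop140218b2} then upgrades this to $U^{-1}X\in\catb_{U^{-1}M}(U^{-1}R)$, using that $U^{-1}M$ is $U^{-1}\fa$-adically semidualizing over $U^{-1}R$ by~\cite[Theorem~5.7]{sather:asc}. The implications (ii)$\Rightarrow$(iii)$\Rightarrow$(iv) are immediate by specializing $U$ to $R\ssm\p$ and restricting to maximal primes.

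The substance of the theorem is (iv)$\Rightarrow$(i), where we must verify $\Rhom MX\in\catdb(R)$ and that $\xi^M_X$ is an isomorphism in $\catd(R)$. For boundedness, Fact~\ref{cor130528a2} yields $\cosupp_R(\Rhom MX)\subseteq\VE(\fa)$, so by Fact~\ref{lem150604a} it suffices to show $\Lotimes K{\Rhom MX}\in\catdb(R)$. The $\fa$-adic finiteness of $M$ places $\Lotimes KM$ in $\catdfb(R)$, and combined with the self-duality $K\simeq\shift^n\Rhom KR$ this yields an isomorphism
\[
\Lotimes K{\Rhom MX}\simeq \shift^n\Rhom{\Lotimes KM}X
\]
whose formation commutes with localization at each $\m\in\VE(\fa)\bigcap\mspec(R)$. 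Therefore $(\Lotimes K{\Rhom MX})_\m\simeq \Lotimes[R_\m]{K_\m}{\Rhom[R_\m]{M_\m}{X_\m}}$, which is bounded by hypothesis~\eqref{thm151214a4}; applying Lemma~\ref{lem151126a}\eqref{lem151126a2} to each $X_\m\in\catb_{M_\m}(R_\m)$ yields the \emph{uniform} bound $\inf(\Rhom[R_\m]{M_\m}{X_\m})\geq\inf(X)-\sup(M)-2n$ independent of $\m$. Since the homology of $\Lotimes K{\Rhom MX}$ is $\fa$-torsion and hence supported in $\VE(\fa)$, vanishing below this uniform bound may be tested at $\m\in\VE(\fa)\bigcap\mspec(R)$, establishing the required boundedness.

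For the evaluation isomorphism, form the cone $C$ of $\xi^M_X$. Since $\supp_R(\Lotimes M{\Rhom MX})\subseteq\supp_R(M)\subseteq\VE(\fa)$ and $\supp_R(X)\subseteq\VE(\fa)$ by hypothesis, we have $\supp_R(C)\subseteq\VE(\fa)$; moreover $C\in\catdb(R)$ by the previous step together with standard Tor-bounds under the support condition (via~\cite[Lemma~3.1]{sather:afcc}). Localization commutes with $\Lotimes M-$ and, via the Koszul trick above, with $\Rhom M-$, so $(\xi^M_X)_\m$ is identified with $\xi^{M_\m}_{X_\m}$, which is an isomorphism by~\eqref{thm151214a4}. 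Hence $C_\m\simeq 0$ for every such $\m$, and since the homology of $C$ is supported in $\VE(\fa)$ we conclude $C\simeq 0$ by Fact~\ref{cor130528a1}, yielding $X\in\catb_M(R)$.

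The principal obstacle is that $M$ is not assumed to have finitely generated homology, so $\Rhom M-$ and $\Lotimes M-$ do not commute with localization directly. The $\fa$-adic finiteness of $M$ enables the Koszul tensoring workaround that places the relevant functors in the $\catdfb$ regime where base change works cleanly, while Lemma~\ref{lem151126a}\eqref{lem151126a2} furnishes the uniform lower bound needed to promote the pointwise boundedness at each $\m$ to global boundedness of $\Rhom MX$.
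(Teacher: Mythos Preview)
Your approach to (iv)$\Rightarrow$(i) is different from the paper's and has the right shape, but there is a gap in the step where you localize the evaluation morphism. You assert that ``localization commutes with $\Rhom M-$, via the Koszul trick above,'' in order to identify $(\xi^M_X)_\m$ with $\xi^{M_\m}_{X_\m}$. However, the Koszul trick only gives the isomorphism $(\Lotimes K{\Rhom MX})_\m\simeq\Lotimes[R_\m]{K_\m}{\Rhom[R_\m]{M_\m}{X_\m}}$; it does not yield $(\Rhom MX)_\m\simeq\Rhom[R_\m]{M_\m}{X_\m}$, since stripping off the $\Lotimes K-$ would require a co-support condition on $(\Rhom MX)_\m=\Lotimes{R_\m}{\Rhom MX}$ that you have not established (and which is unclear, as $R_\m\notin\catdfb(R)$ so neither Lemma~\ref{lem150612b} nor Lemma~\ref{prop151106a} applies). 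The fix is to note that you only need $(\Lotimes M{\Rhom MX})_\m\simeq\Lotimes[R_\m]{M_\m}{\Rhom[R_\m]{M_\m}{X_\m}}$, and this does follow: $\Lotimes M{\omega_{MXR_\m}}$ is an isomorphism by Fact~\ref{fact151222a}\eqref{fact151222a1} with $V=M$. Alternatively, tensor the cone with $K$ before localizing. Either way, your claim ``$C\in\catdb(R)$'' (which is not adequately justified by the citation you give, since $\Lotimes KM$ need not have finite flat dimension) becomes unnecessary for concluding $C\simeq 0$.

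By contrast, the paper's proof of (iv)$\Rightarrow$(i) avoids localizing $\Rhom M-$ altogether. From $X_\m\in\catb_{M_\m}(R_\m)$ it deduces $X_\m\in\catb_M(R)$ via Proposition~\ref{prop140218b}\eqref{prop140218b2}; the uniform bounds $\inf(X)\leq\inf(X_\m)$ and $\sup(X_\m)\leq\sup(X)$ then feed into Theorem~\ref{prop151126a}\eqref{prop151126a2} to give $\bigoplus_\m X_\m\in\catb_M(R)$, the sum taken over $\m\in\VE(\fa)\cap\mspec(R)$. Since $\bigoplus_\m X_\m\simeq\Lotimes X{\bigoplus_\m R_\m}$ with $\bigoplus_\m R_\m$ flat and $\supp_R(\bigoplus_\m R_\m)\supseteq\VE(\fa)\cap\mspec(R)$, condition~\eqref{prop151123a3} of Theorem~\ref{prop151123a} delivers $X\in\catb_M(R)$. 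This route trades your direct verification for the stability machinery (direct sums and the flat-tensor converse) already built in Section~\ref{sec130818dz}, and sidesteps the tensor-evaluation subtleties entirely.
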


\begin{proof}
In light of Theorem~\ref{prop151127a}, it suffices to prove the implication~\eqref{thm151214a4}$\implies$\eqref{thm151214a1}.
Assume that for all $\m\in\VE(\fa)\bigcap\mspec(R)$, we have $X_\m\in\catb_{M_\m}(R_\m)$.
From Proposition~\ref{prop140218b}\eqref{prop140218b2} we have
$X_\m\in\catb_{M}(R)$ for all such $\m$.
Note that we have
$$-\infty<\inf(X)\leq\inf(X_\m)\qquad\text{and}\qquad\sup(X_\m)\leq\sup(X)<\infty$$
for all $\m$.
Thus, Theorem~\ref{prop151126a}\eqref{prop151126a2} implies that
$$\Lotimes X{\left(\bigoplus_{\m}R_\m\right)}\simeq
\bigoplus_{\m}X_\m\in\catb_{M}(R)$$
where the sums are taken over all $\m\in\VE(\fa)\bigcap\mspec(R)$.
Since the module $\bigoplus_{\m}R_\m$ is  flat over $R$ with support containing $\VE(\fa)\bigcap\mspec(R)$,
it follows from 
Theorem~\ref{prop151123a} that we have $X\in\catb_M(R)$, as desired.
\end{proof}

The next example shows why we need $X\in\catdb(R)$ in the previous result.

\begin{ex}\label{ex151214c}
Let $k$ be a field and consider the polynomial ring $R:=k[X]$ with $\fa=0$.
Let $\{\m_i\}_{\i\in\bbz}$ be a set of distinct maximal ideals of $R$, and set $X:=\bigoplus_{i\in\bbz}\shift^iR/\m_i$. 
Then for each $\m\in\mspec(R)$, we have $X_\m\simeq\shift^i\kappa(\m)$ if $\m=\m_i$ for some $i$,
and $X_\m\simeq 0$ otherwise.
In particular, this implies $X_\m\in\catdb(R_\m)=\catb_{R_\m}(R_\m)$ for all $\m$.
On the other hand, we have $X\notin\catdb(R)=\catb_R(R)$.
Note that we have $\supp_R(X)=\{\m_i\}_{\i\in\bbz}$, which is trivially contained in $\spec(R)=\VE(0)$.
So, the failure here is not due to any absence of a support condition.
\end{ex}

\begin{thm}\label{thm151214b}
Let $M$ be an $\fa$-adic semidualizing $R$-complex, and let $X\in\catdb(R)$ be such that $\cosupp_R(X)\subseteq\VE(\fa)$.
Then the following conditions are equivalent.
\begin{enumerate}[\rm(i)]
\item\label{thm151214b1}
$X\in\cata_M(R)$.
\item\label{thm151214b2}
for each multiplicatively closed subset $U\subseteq R$ such that $U^{-1}\fa\neq U^{-1}R$, 
we have $\mathbf{L}\Lambda^{U^{-1}\fa}(U^{-1}X)\in \cata_{U^{-1}M}(U^{-1}R)$.
\item\label{thm151214b3}
For all $\p\in\VE(\fa)$, we have $\mathbf{L}\Lambda^{\fa_\p}(X_\p)\in\cata_{M_\p}(R_\p)$.
\item\label{thm151214b4}
For all $\m\in\VE(\fa)\bigcap\mspec(R)$, we have $\mathbf{L}\Lambda^{\fa_\m}(X_\m)\in\cata_{M_\m}(R_\m)$.
\end{enumerate}
\end{thm}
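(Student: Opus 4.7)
The implications \eqref{thm151214b1}$\implies$\eqref{thm151214b2}$\implies$\eqref{thm151214b3}$\implies$\eqref{thm151214b4} are essentially formal. For \eqref{thm151214b1}$\implies$\eqref{thm151214b2}, apply Theorem~\ref{prop151127c} in the direction \eqref{prop151127c1}$\implies$\eqref{prop151127c3} with $S=U^{-1}R$: this is flat over $R$ with $\fa S\neq S$ by hypothesis on $U$, and the resulting conclusion is exactly $\mathbf{L}\Lambda^{U^{-1}\fa}(U^{-1}X)\in\cata_{U^{-1}M}(U^{-1}R)$. The implication \eqref{thm151214b2}$\implies$\eqref{thm151214b3} comes from specializing $U=R\ssm\p$ for $\p\in\VE(\fa)$ (so $U^{-1}\fa\neq U^{-1}R$ since $\fa\subseteq\p$), and \eqref{thm151214b3}$\implies$\eqref{thm151214b4} is immediate.

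The substantive content is \eqref{thm151214b4}$\implies$\eqref{thm151214b1}, which I would establish by reducing to the Bass-class analogue Theorem~\ref{thm151214a} via Foxby Equivalence~\ref{thm121116a}. For each $\m\in\VE(\fa)\bigcap\mspec(R)$, the hypothesis $\cosupp_R(X)\subseteq\VE(\fa)$ descends to $\cosupp_{R_\m}(X_\m)\subseteq\VE(\fa_\m)$: using the identification $\Rhom[R_\m]{\kappa(\p R_\m)}{X_\m}\simeq\Rhom[R]{\kappa(\p)}{X}$ for $\p\subseteq\m$ (together with the vanishing of the latter for $\p\not\subseteq\m$, since then some element of $\p$ acts invertibly on $X_\m$ and as zero on $\kappa(\p)$), any $\q=\p R_\m\in\cosupp_{R_\m}(X_\m)$ corresponds to $\p\in\cosupp_R(X)\subseteq\VE(\fa)$, whence $\fa_\m\subseteq\q$. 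Fact~\ref{cor130528a3} over $R_\m$ then gives $X_\m\simeq\mathbf{L}\Lambda^{\fa_\m}(X_\m)$, so hypothesis \eqref{thm151214b4} upgrades to $X_\m\in\cata_{M_\m}(R_\m)$. Foxby Equivalence~\ref{thm121116a}\eqref{thm121116a1} applied over $R_\m$ (noting that $M_\m$ is $\fa_\m$-adically semidualizing over $R_\m$ by~\cite[Theorem~5.7]{sather:asc}) then yields $(\Lotimes MX)_\m\simeq\Lotimes{M_\m}{X_\m}\in\catb_{M_\m}(R_\m)$.

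To invoke Theorem~\ref{thm151214a} on the complex $\Lotimes MX$, one must first verify $\Lotimes MX\in\catdb(R)$. Lemma~\ref{lem151126a}\eqref{lem151126a1} applied over $R_\m$ to each $X_\m\in\cata_{M_\m}(R_\m)$ provides the upper bound $\sup(\Lotimes{M_\m}{X_\m})\leq\sup(X_\m)+\sup(M_\m)+n\leq\sup(X)+\sup(M)+n$, while the standard tensor inequality gives $\inf(\Lotimes{M_\m}{X_\m})\geq\inf(M_\m)+\inf(X_\m)\geq\inf(M)+\inf(X)$; both are uniform in $\m$. Since $\supp_R(\Lotimes MX)\subseteq\supp_R(M)\subseteq\VE(\fa)$ and a homology module supported in $\VE(\fa)$ is zero as soon as it vanishes at every $\m\in\VE(\fa)\bigcap\mspec(R)$, uniform vanishing of $(\Lotimes MX)_\m$ outside a fixed range forces $\Lotimes MX\in\catdb(R)$ globally. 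Theorem~\ref{thm151214a} now gives $\Lotimes MX\in\catb_M(R)$, and a final application of Foxby Equivalence~\ref{thm121116a}\eqref{thm121116a3}, together with $\cosupp_R(X)\subseteq\VE(\fa)$, yields $X\in\cata_M(R)$. The main technical obstacle is the uniform-boundedness step, which requires Lemma~\ref{lem151126a} to translate local Auslander-class information at each $\m$ into global bounds on $\Lotimes MX$ independent of $\m$.
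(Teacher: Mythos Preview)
Your argument for \eqref{thm151214b4}$\implies$\eqref{thm151214b1} has a genuine gap: the claim that $\cosupp_R(X)\subseteq\VE(\fa)$ descends to $\cosupp_{R_\m}(X_\m)\subseteq\VE(\fa_\m)$ is false in general, and the isomorphism $\Rhom[R_\m]{\kappa(\p R_\m)}{X_\m}\simeq\Rhom{\kappa(\p)}{X}$ you invoke to justify it is incorrect. Adjunction gives $\Rhom[R_\m]{\kappa(\p)}{X_\m}\simeq\Rhom{\kappa(\p)}{X_\m}$, but there is no reason for $\Rhom{\kappa(\p)}{X_\m}$ to agree with $\Rhom{\kappa(\p)}{X}$; the natural map $X\to X_\m$ need not become an isomorphism after applying $\Rhom{\kappa(\p)}{-}$. (What adjunction actually produces is $\Rhom{\kappa(\p)}{X}\simeq\Rhom[R_\m]{\kappa(\p)}{\Rhom{R_\m}{X}}$, involving the \emph{co}-localization.) Concretely, take $R=k[Y,Z]$, $\fa=(Y)$, $X=\Comp Ra=k[Z][\![Y]\!]$, and $\m=(Y,Z)$: then $\cosupp_R(X)\subseteq\VE(\fa)$ since $X$ is the $\fa$-adic completion, but $X_\m$ is not $\fa_\m$-adically complete---the element $\sum_{n\geq 0}(1+Z)^{-n}Y^n$ lies in $(X_\m)^{\wedge}_{\fa_\m}\cong k[Z]_{(Z)}[\![Y]\!]$ but not in $X_\m$, by the same argument as in Example~\ref{ex151214b}. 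So you cannot conclude $X_\m\simeq\mathbf{L}\Lambda^{\fa_\m}(X_\m)$, and hence you do not obtain $X_\m\in\cata_{M_\m}(R_\m)$ or the simple bound $\sup(\Lotimes[R_\m]{M_\m}{X_\m})\leq\sup(X)+\sup(M)+n$.

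The paper avoids this pitfall by never asserting that $X_\m$ lies in the Auslander class. Instead it works directly with $\mathbf{L}\Lambda^{\fa_\m}(X_\m)\in\cata_{M_\m}(R_\m)$ and uses Fact~\ref{fact151222c} (tensoring with $M_\m$, whose support is contained in $\VE(\fa_\m)$, absorbs the $\mathbf{L}\Lambda^{\fa_\m}$) to obtain $(\Lotimes MX)_\m\simeq\Lotimes[R_\m]{M_\m}{\mathbf{L}\Lambda^{\fa_\m}(X_\m)}\in\catb_{M_\m}(R_\m)$. The price is that one must bound $\sup(\mathbf{L}\Lambda^{\fa_\m}(X_\m))$ uniformly in $\m$, which the paper accomplishes via $\mathbf{L}\Lambda^{\fa_\m}(-)\simeq\Rhom[R_\m]{\mathbf{R}\Gamma_{\fa_\m}(R_\m)}{-}$ and the inequality $\sup(\Rhom[R_\m]{\mathbf{R}\Gamma_{\fa_\m}(R_\m)}{X_\m})\leq\sup(X)+\depth_\fa(R)$. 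After that, your use of Lemma~\ref{lem151126a}\eqref{lem151126a1} and Theorem~\ref{thm151214a} is exactly the paper's route; the overall architecture you describe is right, but the shortcut through $X_\m\simeq\mathbf{L}\Lambda^{\fa_\m}(X_\m)$ does not work.
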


\begin{proof}
Again, it suffices to prove the implication~\eqref{thm151214b4}$\implies$\eqref{thm151214b1}.
Assume that for all $\m\in\VE(\fa)\bigcap\mspec(R)$, we have $\mathbf{L}\Lambda^{\fa_\m}(X_\m)\in\cata_{M_\m}(R_\m)$.
Foxby Equivalence~\ref{thm121116a}\eqref{thm121116a3} over $R_{\m}$ implies that we have
$\Lotimes[R_\m]{M_\m}{\mathbf{L}\Lambda^{\fa_\m}(X_\m)}\in\catb_{M_\m}(R_\m)$ for all such $\m$. 
By Fact~\ref{fact151222c} we have the following isomorphisms in $\catd(R_\m)$
\begin{equation}\label{eq151214a}
\Lotimes[R_\m]{M_\m}{\mathbf{L}\Lambda^{\fa_\m}(X_\m)}
\simeq \Lotimes[R_\m]{M_\m}{X_\m}
\simeq(\Lotimes MX)_\m
\end{equation}
so we conclude that $(\Lotimes MX)_\m\in\catb_{M_\m}(R_\m)$ for all $\m\in\VE(\fa)\bigcap\mspec(R)$.
Moreover, we have $\supp_R(\Lotimes MX)\subseteq\supp_R(M)\subseteq\VE(\fa)$, by Fact~\ref{cor130528a2}. 

Claim: we have $\Lotimes MX\in\catb_M(R)$.
In light of the previous paragraph, it suffices by Theorem~\ref{thm151214a} to show that 
$\Lotimes MX\in\catdb(R)$. Since the conditions $M,X\in\catdb(R)$ imply that $\Lotimes MX\in\catd_+(R)$,
it suffices to show that $\sup(\Lotimes MX)<\infty$.
To this end, the first two steps in the next sequence are from
Facts~\ref{fact130619b} and~\ref{disc151112a}\eqref{disc151112a5}.
\begin{align*}
\sup(\mathbf{L}\Lambda^{\fa_\m}(X_\m))
&=\sup(\Rhom[R_\m]{\mathbf{R}\Gamma_{\fa_\m}(R_\m)}{X_\m})\\
&\leq\sup(X_\m)-\inf(\mathbf{R}\Gamma_{\fa_\m}(R_\m))\\
&=\sup(X_\m)-\inf(\RG aR_\m)\\
&\leq\sup(X)-\inf(\RG aR)\\
&\leq\sup(X)+\depth_\fa(R)
\end{align*}
The third step is from the standard isomorphism $\mathbf{R}\Gamma_{\fa_\m}(R_\m)\simeq\RG aR_\m$;
one can verify this via the \v Cech complex over $R$.
The fourth step is routine, and the fifth one is Grothendieck's standard non-vanishing result for local cohomology.

For all $\m\in\VE(\fa)\bigcap\mspec(R)$ we have $\mathbf{L}\Lambda^{\fa_\m}(X_\m)\in\cata_{M_\m}(R_\m)$.
Thus, for all such $\m$, the first step in the next sequence is from 
Lemma~\ref{lem151126a}\eqref{lem151126a1}.
\begin{align*}
\sup(\Lotimes[R_\m]{M_\m}{\mathbf{L}\Lambda^{\fa_\m}(X_\m)})
&\leq\sup(\mathbf{L}\Lambda^{\fa_\m}(X_\m))+\sup(M_\m)+n\\
&\leq\sup(X)+\depth_\fa(R)+\sup(M_\m)+n\\
&\leq\sup(X)+\depth_\fa(R)+\sup(M)+n
\end{align*}
The second step is from the preceding paragraph, and the third step is routine.
This explains the last step in the next sequence.
\begin{align*}
\sup(\Lotimes MX)
&=\sup\left\{\sup((\Lotimes MX)_\m)\mid\m\in\Supp_R(\Lotimes MX)\bigcap\mspec(R)\right\} \\
&=\sup\left\{\sup((\Lotimes MX)_\m)\mid\m\in\VE(\fa)\bigcap\mspec(R)\right\} \\
&=\sup\left\{\sup(\Lotimes[R_\m]{M_\m}{\mathbf{L}\Lambda^{\fa_\m}(X_\m)})\mid\m\in\VE(\fa)\bigcap\mspec(R)\right\} \\
&\leq\sup(X)+\depth_\fa(R)+\sup(M)+n
\end{align*}
The first step here is routine, where the ``large support'' of an $R$-complex $Y$ is 
$\Supp_R(Y):=\{\p\in\spec(R)\mid Y_\p\not\simeq 0\}$.
For the second step, use the fact that we have $\supp_R(\Lotimes MX)\subseteq\supp_R(M)\subseteq\VE(\fa)$,
which implies that $\Supp_R(\Lotimes MX)\subseteq\VE(\fa)$, by~\cite[Proposition~3.15(a)]{sather:scc}.
The third step is from the isomorphism~\eqref{eq151214a}.
This establishes the Claim.

To complete the proof, note that 
$\cosupp_R(X)\subseteq\VE(\fa)$ and $\Lotimes MX\in\catb_M(R)$, by the Claim.
So, Foxby Equivalence~\ref{thm121116a}\eqref{thm121116a3} implies that $X\in\cata_M(R)$, as desired.
\end{proof}

The next example shows that one cannot replace $\mathbf{L}\Lambda^{U^{-1}\fa}(U^{-1}X)$ with $U^{-1} X$ in the previous result.

\begin{ex}\label{ex151214b}
Let $k$ be a field and consider the power series ring $R=k[\![Y,Z]\!]$ with $\fa=YR$ and $U=\{1,Z,Z^2,\ldots\}$.
We first show that $U^{-1}R$ is not $U^{-1}\fa$-adically complete.\footnote{This may be well-known, but we include a short proof
for the sake of completeness.} Suppose by way of contradiction that $U^{-1}R$ were $U^{-1}\fa$-adically complete.
Since $U^{-1}\fa=YU^{-1}R$,
it follows that we have 
$$\sum_{i=0}^\infty\frac{1}{Z^i}Y^i\in U^{-1}R=k[\![Y,Z]\!][Z^{-1}].$$
This means that there is a power series $f\in R$ and an integer $m\geq 0$ such that
$$\sum_{i=0}^\infty\frac{1}{Z^i}Y^i=\frac{f}{Z^m}.$$
Clearing the denominator $Z^m$, we find that
$$\sum_{i=0}^\infty\frac{Z^m}{Z^i}Y^i=f\in R=k[\![Y,Z]\!]$$
which is impossible.

Now, Proposition~\ref{prop130612a} shows that $R\in\cata_{\RG aR}(R)$.
On the other hand, we have just  shown that $U^{-1}R$ is not $U^{-1}\fa$-adically complete,
so we have 
$U^{-1}R\notin\cata_{U^{-1}\RG aR}(U^{-1}R)$ by Proposition~\ref{prop140218a}.
\end{ex}

Independent of this example, the derived local homology in Theorem~\ref{thm151214b} is still a bit ugly.
Of course, since localization is a tensor product, and tensor products respect supports (not cosupports),
this is inevitable. On the other hand, Homs respect cosupports, so it makes sense to consider co-localization as well. 
Here, we have only limited results. 

\begin{prop}\label{prop151214a}
Let $M$ be an $\fa$-adic semidualizing $R$-complex, and assume that $\dim(R)<\infty$. 
Let $U\subseteq R$ be multiplicatively closed such that $U^{-1}\fa\neq U^{-1}R$, and let $X\in\catdb(R)$ be given. 
\begin{enumerate}[\rm(a)]
\item\label{prop151214a1}
If $X\in\cata_M(R)$, then $\Rhom{U^{-1}R}X\in\cata_{U^{-1}M}(U^{-1}R)$.
\item\label{prop151214a2}
If $X\in\catb_M(R)$, then $\mathbf{R}\Gamma_{U^{-1}\fa }(\Rhom{U^{-1}R}X)\in\catb_{U^{-1}M}(U^{-1}R)$.
\end{enumerate}
\end{prop}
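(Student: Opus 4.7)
The plan is to reduce both parts to Theorems~\ref{prop151124a} and~\ref{prop151124b} by taking $P:=U^{-1}R$, and then to transfer the conclusion from $\catd(R)$ back to $\catd(U^{-1}R)$ via Proposition~\ref{prop140218b}. Two preliminaries are needed. First, the hypothesis $\dim(R)<\infty$ ensures, by Jensen's theorem, that the flat $R$-module $U^{-1}R$ satisfies $\pd_R(U^{-1}R)\leq\dim(R)<\infty$, so $P=U^{-1}R$ meets the projective-dimension requirement of Theorems~\ref{prop151124a} and~\ref{prop151124b}. Second, since $U^{-1}\fa\neq U^{-1}R$, the complex $U^{-1}M\simeq\Lotimes{U^{-1}R}M$ is $U^{-1}\fa$-adically semidualizing over $U^{-1}R$ by~\cite[Theorem~5.7]{sather:asc}, which supplies the hypothesis needed to invoke Proposition~\ref{prop140218b}.

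For part~\eqref{prop151214a1}, assume $X\in\cata_M(R)$. Theorem~\ref{prop151124a} applied with $P=U^{-1}R$ immediately yields $\Rhom{U^{-1}R}X\in\cata_M(R)$. This complex carries its natural $U^{-1}R$-complex structure, whose underlying $R$-complex is obtained by applying the forgetful functor $Q$. Proposition~\ref{prop140218b}\eqref{prop140218b1} then delivers $\Rhom{U^{-1}R}X\in\cata_{U^{-1}M}(U^{-1}R)$, as desired.

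For part~\eqref{prop151214a2}, assume $X\in\catb_M(R)$. Theorem~\ref{prop151124b} applied with $P=U^{-1}R$ yields $\RG a{\Rhom{U^{-1}R}X}\in\catb_M(R)$. The main step is to identify $\RG a{\Rhom{U^{-1}R}X}$ with $\mathbf{R}\Gamma_{U^{-1}\fa}(\Rhom{U^{-1}R}X)$ along the forgetful functor. Using Fact~\ref{fact130619b} together with the classical localization formula $\Lotimes{\RG a R}{U^{-1}R}\simeq\mathbf{R}\Gamma_{U^{-1}\fa}(U^{-1}R)$ (provable via the \v{C}ech complex, as in the proof of Theorem~\ref{thm151214b}), one has
\begin{align*}
\RG a{\Rhom{U^{-1}R}X}
&\simeq\Lotimes{\RG a R}{\Rhom{U^{-1}R}X}\\
&\simeq\Lotimes[U^{-1}R]{\mathbf{R}\Gamma_{U^{-1}\fa}(U^{-1}R)}{\Rhom{U^{-1}R}X}\\
&\simeq\mathbf{R}\Gamma_{U^{-1}\fa}(\Rhom{U^{-1}R}X).
\end{align*}
Proposition~\ref{prop140218b}\eqref{prop140218b2} then transfers this conclusion to give $\mathbf{R}\Gamma_{U^{-1}\fa}(\Rhom{U^{-1}R}X)\in\catb_{U^{-1}M}(U^{-1}R)$.

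The only real obstacle, modest though it is, is pinning down the derived-local-cohomology identification in part~\eqref{prop151214a2} so that it is genuinely compatible with the forgetful functor $Q$ (i.e.\ that the second isomorphism in the display, which exploits the $U^{-1}R$-structure on $\Rhom{U^{-1}R}X$, is an isomorphism of $R$-complexes). Once this compatibility is in place, the proof is a matter of chaining together the previously established results.
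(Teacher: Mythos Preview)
Your proposal is correct and follows essentially the same approach as the paper: both apply Theorems~\ref{prop151124a} and~\ref{prop151124b} with $P=U^{-1}R$ (using $\dim(R)<\infty$ to bound $\pd_R(U^{-1}R)$; the paper cites Raynaud--Gruson~\cite[Theorem II.3.2.6]{raynaud:cpptpm} rather than Jensen, but either reference works), then transfer via Proposition~\ref{prop140218b}. The paper phrases the key identification in part~\eqref{prop151214a2} as a natural isomorphism of functors $Q\circ\mathbf{R}\Gamma_{U^{-1}\fa}\simeq\RGno a\circ Q$ verified via the \v Cech complex, which is exactly the compatibility you flag and unpack in your displayed chain of isomorphisms.
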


\begin{proof}
We prove part~\eqref{prop151214a2}; the proof of part~\eqref{prop151214a1} is easier.
Let $Q\colon\catd(U^{-1}R)\to\catd(R)$ be the forgetful functor. 
The fact that $U^{-1}R$ is flat over $R$ implies that we have natural isomorphisms
$Q\circ\mathbf{R}\Gamma_{U^{-1}\fa }\simeq\RGno a\circ Q$ of functors $\catd(U^{-1}R)\to\catd(R)$;
this is easily verified using the \v Cech complex.

Assume that $X\in\catb_M(R)$. The assumption $\dim(R)<\infty$ implies that we have
$\pd_R(U^{-1}R)\leq\dim(R)<\infty$ by~\cite[Theorem II.3.2.6]{raynaud:cpptpm}. Thus, 
the isomorphism from the preceding paragraph conspires with Theorem~\ref{prop151124b} to show that
$$
Q(\mathbf{R}\Gamma_{U^{-1}\fa }(\Rhom{U^{-1}R}X))\simeq\RG a{\Rhom{U^{-1}R}X}\in\catb_M(R).$$
The desired conclusion $\mathbf{R}\Gamma_{U^{-1}\fa }(\Rhom{U^{-1}R}X)\in\catb_{U^{-1}M}(U^{-1}R)$
now follows from Proposition~\ref{prop140218b}\eqref{prop140218b2}.
\end{proof}

\begin{question}\label{q151214a}
Let $M$ be an $\fa$-adic semidualizing $R$-complex, and assume that $\dim(R)<\infty$. 
Let $X\in\catdb(R)$ be given.
\begin{enumerate}[\rm(a)]
\item\label{q151214a1}
If $\cosupp_R(X)\subseteq\VE(\fa)$ and $\Rhom{R_\m}X\in\cata_{M_\m}(R_\m)$ for every $\m\in\VE(\fa)\bigcap\mspec(R)$,
must we have $X\in\cata_M(R)$?
\item\label{q151214a2}
If $\supp_R(X)\subseteq\VE(\fa)$ and 
$\mathbf{R}\Gamma_{\fa R_\m}(\Rhom{R_\m}X)\in\catb_{M_\m}(R_\m)$ for every $\m\in\VE(\fa)\bigcap\mspec(R)$,
must we have $X\in\catb_M(R)$?
\end{enumerate}
\end{question}

\begin{disc}\label{disc151214a}
One may be tempted to attempt to answer Question~\ref{q151214a}\eqref{q151214a1} as in the proof of Theorem~\ref{thm151214a}.
Following this logic, one concludes that
\begin{gather*}
\inf(X)-\dim(R)\leq\inf(\Rhom{R_\m}X)\\
\sup(\Rhom{R_\m}X)\leq\sup(X)
\end{gather*}
for all $\m\in\mspec(R)\bigcap\VE(\fa)$ and that
$$\mathbf{R}\!\operatorname{Hom}\left(\bigoplus_\m R_\m,X\right)\simeq\prod_\m\Rhom{R_\m}X\in\cata_M(R).$$
However, Theorem~\ref{prop151124a} does not allow us to conclude that $X\in\cata_M(R)$.
\end{disc}

\section*{Acknowledgments}
We are grateful to Srikanth Iyengar, 
Liran Shaul,
and Amnon Yekutieli
for helpful comments about this work.

%\bibliography{../+new}
\providecommand{\bysame}{\leavevmode\hbox to3em{\hrulefill}\thinspace}
\providecommand{\MR}{\relax\ifhmode\unskip\space\fi MR }
% \MRhref is called by the amsart/book/proc definition of \MR.
\providecommand{\MRhref}[2]{%
  \href{http://www.ams.org/mathscinet-getitem?mr=#1}{#2}
}
\providecommand{\href}[2]{#2}

\end{document}